\newtheorem{thm}{Theorem}
\newtheorem{corr}[thm]{Corollary}
\newtheorem{lem}[thm]{Lemma}
\newtheorem{exam}{Example}[section]
\theoremstyle{definition}
\newtheorem{defn}{Definition}[section]
\theoremstyle{remark}
\newtheorem{rem}[thm]{Remark}
\def\C{\mathbb C}
\def\R{\mathbb R}
\def\N{\mathbb N}
\def\cal{\mathcal}
\def\f{\frac}
\def\pt{\partial}
\def\n{\nabla}
\def\e{\varepsilon}
\begin{document}
\title[Gradient estimates for elliptic equations on manifolds]{Gradient estimates via two-point functions for elliptic equations on manifolds}
\author{Ben~Andrews}
\address{Mathematical Sciences Institute, Australian National University, ACT 2601, Australia}
\email{Ben.Andrews@anu.edu.au}
\author{Changwei~Xiong}
\address{Mathematical Sciences Institute, Australian National University, Canberra ACT 2601, Australia}
\email{changwei.xiong@anu.edu.au}
\date{\today}
\thanks{This research was partly supported by Discovery Projects grant DP120102462 and Australian Laureate Fellowship FL150100126 of the Australian Research Council. }
\subjclass[2010]{{35J62}, {35J15}, {35R01}, {53C21}}
\keywords{Gradient estimate, elliptic partial differential equation}

\maketitle

\begin{abstract}
We derive estimates relating the values of a solution at any two points to the distance between the points, for quasilinear isotropic elliptic equations on compact Riemannian manifolds, depending only on dimension and a lower bound for the Ricci curvature. These estimates imply sharp gradient bounds relating the gradient of an arbitrary solution at given height to that of a symmetric solution on a warped product model space.  We also discuss the problem on Finsler manifolds with nonnegative weighted Ricci curvature, and on complete manifolds with bounded geometry, including solutions on manifolds with boundary with Dirichlet boundary condition.  Particular cases of our results include gradient estimates of Modica type.
\end{abstract}

\section{Introduction}

Let $(M^n,g)$ be a compact Riemannian manifold without boundary. Assume the Ricci curvature of $M$ has lower bound $Ric\geq (n-1)\kappa $. We consider ``isotropic'' equations of the following form:
\begin{equation}\label{eq-G}
\left[\alpha(u,|Du|)\frac{D_iuD_ju}{|Du|^2}+\beta(u,|Du|)(\delta_{ij}-\frac{D_iuD_ju}{|Du|^2})\right]D_iD_ju+q(u,|Du|)=0.
\end{equation}
We assume that Equation \eqref{eq-G} is nonsingular, i.e. the left hand side of \eqref{eq-G} is continuous on $\R\times TM\times Sym^2(T^*M)$, $\alpha$ and $\beta$ are nonnegative functions, and that $\beta(s,t)>0$ for $t>0$. Our first main result is the following estimate:
\begin{thm}\label{thm1}
Let $(M^n,g)$ be a compact Riemannian manifold with non-negative Ricci curvature, and let $u$ be a viscosity solution of Equation \eqref{eq-G}. Suppose $\varphi:[a,b]\rightarrow [\inf u,\sup u]$ is a $C^2$ solution of
\begin{align}
\alpha(\varphi,\varphi')\varphi''+q(\varphi,\varphi')&=0\quad\text{on}\ [a,b];\label{cond1.2}\\
\varphi(a)=\inf u;\quad \varphi(b)=\sup u;\quad \varphi'&>0\quad\text{on}\ [a,b].\label{cond1.1}
\end{align}
Moreover let $\psi$ be the inverse of $\varphi$, i.e. $\psi(\varphi(z))=z$. Then we have
\begin{equation*}
\psi(u(y))-\psi(u(x))-d(x,y)\leq 0,\: \forall x,y\in M.
\end{equation*}
\end{thm}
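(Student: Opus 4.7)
\emph{Proof plan.}
Argue by contradiction: assume the upper-semicontinuous function
$Z(x,y):=\psi(u(y))-\psi(u(x))-d(x,y)$ attains a positive maximum
$m>0$ on the compact space $M\times M$ at some point $(x_0,y_0)$.
Since $Z$ vanishes on the diagonal and $\psi$ is strictly increasing,
$x_0\neq y_0$ and $u(y_0)>u(x_0)$ lies strictly inside $[\inf u,\sup u]$.
By a small smooth perturbation of $Z$ we may assume that
$(x_0,y_0)$ is a strict maximum and that $y_0$ lies outside the cut
locus of $x_0$; denote by $\gamma\colon[0,\ell]\to M$ the unique
unit-speed minimising geodesic from $x_0$ to $y_0$ with $\ell=d(x_0,y_0)$,
and let $\{e_1,\dots,e_{n-1},e_n=\dot\gamma\}$ be a parallel orthonormal
frame along $\gamma$.

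The first input is pointwise viscosity information. The maximality
$Z(x_0,y)\le m$ and $Z(x,y_0)\le m$ give that $u$ is touched from
above at $y_0$ by $\phi_y(y):=\varphi(\psi(u(y_0))+d(x_0,y)-\ell)$
and from below at $x_0$ by $\phi_x(x):=\varphi(\psi(u(x_0))+\ell-d(x,y_0))$.
Using $\nabla_yd(x_0,y_0)=\dot\gamma(\ell)$, $\nabla_xd(x_0,y_0)=-\dot\gamma(0)$
and the vanishing of $(\nabla^2d)(\dot\gamma,\cdot)$, one finds that at each
endpoint $|\nabla\phi|=\varphi'(\psi(u))$ with $\nabla\phi$ aligned with
$\dot\gamma$. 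Feeding these into the viscosity sub/supersolution
inequalities and cancelling the radial term and the constant $q$ via the
1D equation \eqref{cond1.2} (after dividing by $\beta\varphi'>0$) produces
the transverse bounds
\[
  \sum_{i=1}^{n-1}(\nabla_y^2d)(e_i,e_i)(x_0,y_0)\ge0,\qquad
  \sum_{i=1}^{n-1}(\nabla_x^2d)(e_i,e_i)(x_0,y_0)\ge0.
\]

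These one-sided bounds are insufficient on their own, so the joint
structure of the maximum on $M\times M$ must be exploited. The
Crandall--Ishii lemma, applied to the transformed viscosity solution
$v:=\psi\circ u$ (solving the equation obtained from \eqref{eq-G} via
$u=\varphi(v)$) with coupling function $d(x,y)$, supplies symmetric
matrices $X\in\mathrm{Sym}(T_{x_0}M)$, $Y\in\mathrm{Sym}(T_{y_0}M)$ with
$(\dot\gamma(0),X)\in\bar J^{2,-}v(x_0)$, $(\dot\gamma(\ell),Y)\in\bar J^{2,+}v(y_0)$
and
\[
  \begin{pmatrix}X & 0\\ 0 & -Y\end{pmatrix}\le D^2d(x_0,y_0)+o(1)
\]
as symmetric operators on $T_{x_0}M\oplus T_{y_0}M$. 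Testing this
inequality on the pairs $(e_i(0),e_i(\ell))$ obtained by parallel
transport, and invoking the second variation of arclength along
$\gamma$ (whose $|V'|^2$ contribution vanishes for parallel fields), gives
\[
  \sum_{i=1}^{n-1}X(e_i,e_i)-\sum_{i=1}^{n-1}Y(e_i,e_i)\le
  -\int_0^{\ell}Ric(\dot\gamma,\dot\gamma)\,dt\le0
\]
by virtue of $Ric\ge0$, together with the radial bound
$X(\dot\gamma,\dot\gamma)\le Y(\dot\gamma,\dot\gamma)$.

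The final step closes the contradiction. The transformed equation for
$v$ at a point where $|\nabla v|=1$, forced at both endpoints by the
first-order condition, reduces after \eqref{cond1.2} to
$\alpha\,v_{pp}+\beta\sum_{i<n}v_{ii}=0$; the corresponding viscosity
jet inequalities against $X$ and $Y$ read
\[
  \alpha_0X(\dot\gamma,\dot\gamma)+\beta_0\sum_{i<n}X(e_i,e_i)\le0,\qquad
  \alpha_1Y(\dot\gamma,\dot\gamma)+\beta_1\sum_{i<n}Y(e_i,e_i)\ge0,
\]
where $\alpha_i:=\alpha(\varphi(z_i),\varphi'(z_i))$,
$\beta_i:=\beta(\varphi(z_i),\varphi'(z_i))$, and $z_i$ are the values
of $\psi\circ u$ at $x_0,y_0$ with $z_1-z_0>\ell$ by assumption.
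Subtracting these two inequalities, substituting the joint matrix bounds
of the previous step, and exploiting the continuity of $\alpha,\beta$
in $z$, one obtains a relation whose sign is pinned by the nonpositive
curvature integral $-\int_0^\ell Ric(\dot\gamma,\dot\gamma)\,dt$,
contradicting $m>0$. The main obstacle lies precisely in this last
bookkeeping: because $\alpha_0\neq\alpha_1$ and $\beta_0\neq\beta_1$ in
general, the subtraction does not cancel as cleanly as in the 1D model,
and a weighted combination together with the strict-maximum
perturbation (which gives a definite gap surviving passage to the limit
in Crandall--Ishii) is needed to handle the rigidity case
$Ric\equiv0$ along $\gamma$.
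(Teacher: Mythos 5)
Your overall strategy coincides with the paper's proof (two-point maximum, Crandall--Ishii applied to $\psi\circ u$ with the distance as coupling, transformation of jets by the increasing $\varphi$, second variation with parallel fields, and the one-dimensional ODE), but as written the argument does not close, and it stalls exactly where the paper does its real work. First, your Crandall--Ishii inequality has the wrong sign: with $(\dot\gamma(0),X)\in\bar{\mathcal{J}}^{2,-}(\psi\circ u)(x_0)$ and $(\dot\gamma(\ell),Y)\in\bar{\mathcal{J}}^{2,+}(\psi\circ u)(y_0)$ the correct conclusion is $\operatorname{diag}(-X,Y)\le D^2\tilde d+\lambda (D^2\tilde d)^2$, not $\operatorname{diag}(X,-Y)\le D^2d+o(1)$. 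Hence the usable bounds are $\sum_{i<n}\bigl(Y(e_i,e_i)-X(e_i,e_i)\bigr)\le-\int_0^\ell \mathrm{Ric}(\dot\gamma,\dot\gamma)\,dt+O(\lambda)$, together with $Y(\dot\gamma,\dot\gamma)\le O(\lambda)$ and $X(\dot\gamma,\dot\gamma)\ge -O(\lambda)$ --- the reverse of what you state, and your versions point the wrong way for the final subtraction. With the correct signs the ``main obstacle'' you leave open disappears: divide the endpoint inequality at $x_0$ by $\varphi'\beta_0$ and the one at $y_0$ by $\varphi'\beta_1$, so the transverse coefficient equals $1$ at both ends; the radial terms are harmless because $\alpha\ge 0$ and the radial Hessian entries of the coupling vanish; subtracting then gives $G(z_{y_0})\ge G(z_{x_0})-O(\lambda)$ with $G=(q+\alpha\varphi'')/(\varphi'\beta)$. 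This weighted combination is precisely the positive semidefinite matrix $W$ in the paper's proof, so the step you flag as unresolved is the heart of the argument, not an afterthought.

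Two further gaps. The strictness cannot come from making the maximum of $Z$ strict: such a perturbation yields no quantitative gap in the second-order conditions, and with the exact ODE \eqref{cond1.2} one only obtains $0\ge -O(\lambda)$, which is no contradiction even when $\mathrm{Ric}\equiv 0$ along $\gamma$. The paper instead perturbs $\varphi$, solving $\alpha\varphi_\delta''+q=-\delta z\,\varphi_\delta'\beta$ so that $G_\delta(z)=-\delta z$ is strictly decreasing (condition \eqref{cond3.2} of Theorem \ref{thm3.1}); then $z_{y_0}=z_{x_0}+\ell+\varepsilon_0$ forces $\delta(\ell+\varepsilon_0)\le O(\lambda)$, a genuine contradiction, and Theorem \ref{thm1} follows by letting $\delta\to 0$. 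Finally, you cannot simply ``assume $y_0$ lies outside the cut locus of $x_0$ after a small smooth perturbation'': nothing guarantees the perturbed maximum avoids cut pairs, and the Crandall--Ishii lemma needs a $C^2$ coupling. The paper's device is to replace $d$ near $(x_0,y_0)$ by an explicit smooth function $\tilde d\ge d$, equal to $d$ at $(x_0,y_0)$, defined as the length of a fixed family of curves in Fermi coordinates along $\gamma_0$; this restores smoothness regardless of the cut locus and makes $D^2\tilde d$ on the pair vectors literally equal to the second variation of arclength. Your first-variation ``touching function'' paragraph is correct but is not needed once the joint argument is set up properly.
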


By allowing $y$ to approach $x$, we deduce the following gradient estimate:

\begin{corr}\label{cor:gradest}
Under the assumptions of Theorem \ref{thm1}, $|Du(x)|\leq \varphi'(\psi(u(x)))$ for all $x\in M$.
\end{corr}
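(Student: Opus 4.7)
The plan is to derive the gradient estimate by specializing Theorem~\ref{thm1} along a geodesic tangent to $Du(x)$ and letting the endpoint approach $x$.

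First I would observe that Theorem~\ref{thm1}, applied to both orderings of the pair $(x,y)$, gives $|\psi(u(y))-\psi(u(x))|\leq d(x,y)$, so $\psi\circ u$ is $1$-Lipschitz on $M$. Since $\psi'=1/\varphi'$ is continuous and positive on the interval $[\inf u,\sup u]$, it follows that $u$ itself is locally Lipschitz, and hence differentiable almost everywhere by Rademacher's theorem. I would therefore fix a point $x$ at which $u$ is differentiable, and reduce to the case $Du(x)\neq 0$ (the case $Du(x)=0$ is immediate as $\varphi'>0$).

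Next, let $\gamma\colon[0,\delta)\to M$ denote the unit-speed geodesic with $\gamma(0)=x$ and $\dot\gamma(0)=Du(x)/|Du(x)|$. Since $\gamma$ has unit speed we have $d(x,\gamma(t))\leq t$ for small $t\geq 0$, so Theorem~\ref{thm1} yields
\[
\psi(u(\gamma(t)))-\psi(u(x))\leq d(x,\gamma(t))\leq t.
\]
Dividing by $t>0$ and letting $t\to 0^+$, the chain rule gives
\[
\psi'(u(x))\,|Du(x)|=\frac{d}{dt}\bigg|_{t=0^+}\psi(u(\gamma(t)))\leq 1.
\]
Finally, since $\psi=\varphi^{-1}$ we have $\psi'(u(x))=1/\varphi'(\psi(u(x)))$, and rearranging yields $|Du(x)|\leq\varphi'(\psi(u(x)))$, as required.

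There is no substantial obstacle here: the corollary is essentially the infinitesimal form of Theorem~\ref{thm1}. The only mild technicality is the regularity of the viscosity solution, which is handled by the Lipschitz bound noted above; at non-differentiability points one can interpret $|Du(x)|$ via the Clarke subdifferential and extend the inequality by continuity of $\varphi'\circ\psi$.
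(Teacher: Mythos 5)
Your proposal is correct and is essentially the paper's own argument: the corollary is obtained by letting $y\to x$ along a geodesic in the direction of $Du(x)$, dividing the two-point inequality by $d(x,y)$ and passing to the limit, which gives $\psi'(u(x))\,|Du(x)|\leq 1$ and hence $|Du(x)|\leq\varphi'(\psi(u(x)))$. The only difference is that you supply the regularity discussion (Lipschitz bound from the two-point estimate, Rademacher/Clarke interpretation at non-differentiability points), whereas the paper simply states the corollary as the infinitesimal form of Theorem \ref{thm1} (and in its Corollary \ref{corr3.1} assumes $u\in C^1(M)$ outright), so your added care is harmless and consistent with the paper's approach.
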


By applying this result in special situations, we recover several previously known results, known as Modica-type gradient estimates.  These originate from the work of Modica \cite{Mod85}, who considered bounded solutions on $\R^n$ of the equation
\begin{equation}\label{eq:Modica}
\Delta u-Q'(u)=0,
\end{equation}
where $Q$ is chosen (by adding a suitable constant) to be non-negative on the range of $u$.  The key gradient estimate of \cite{Mod85} is the following: $|Du(x)|^2\leq 2Q(u(x))$ for all $x$.  This was proved by differentiating Equation \eqref{eq:Modica} to derive a maximum principle for the function $P = \frac12|Du|^2-Q(u)$, and subsequent works have followed this method (sometimes called the ``$P$-function method'').  We observe that this is a consequence of Corollary~\ref{cor:gradest}:  In this case $\alpha=1$, and Equation \eqref{cond1.2} is equivalent to the statement that $P=\frac12(\varphi')^2-Q(\varphi)$ is constant. We can define $\varphi$ by solving this with $P=0$, and then we have $\varphi'(\psi(z)) = \sqrt{2Q(z)}$ for each $z\in[\inf u,\sup u]$, so that the estimate of Corollary \ref{cor:gradest} is exactly Modica's estimate.

Caffarelli, Garofalo and Segala \cite{CGS94} generalized Modica's result to critical points of energies of the form
\begin{equation}
\cal{E}(u)=\int_{\R^n} \left(\f{1}{2}\Phi(|D u|^2)+Q(u)\right)dx,
\end{equation}
where $\Phi \in C^3(\R^+)$ with $\Phi(0)=0$ and $Q\in C^2(\R)$ (Modica's result corresponds to $\varPhi(z)=z$). The Euler-Lagrange equation for $\cal{E}$ is given by
\begin{equation}\label{eq1}
div(\Phi'(|D u|^2) D u)-Q'(u)=0.
\end{equation}
Note that Equation \eqref{eq1} is a special case of \eqref{eq-G}, with $\alpha = 2\varPhi''(z)z+\varPhi'(z)$ and $\beta=\varPhi'(z)$ where $z=|Du|^2$.  In \cite{CGS94} the following estimate was derived:  $P\leq 0$, where
\begin{equation*}
P=\Phi'(|D u|^2)|D u|^2-\f{1}{2}\Phi(|D u|^2)-Q(u)
\end{equation*}
(again $Q$ is chosen to be non-negative on the range of $u$).  As before, this estimate is a direct consequence of our Corollary \ref{cor:gradest}:  In this case Equation~\eqref{cond1.2} becomes
$$
0 = (2\varPhi''((\varphi')^2)(\varphi')^2+\varPhi'((\varphi')^2))\varphi'' - Q'(\varphi).
$$
Multiplying by $\varphi'$ we obtain
$$
0= \left(\varPhi'((\varphi')^2)(\varphi')^2-\frac12\varPhi((\varphi')^2)-Q(\varphi)\right)'=P'.
$$
Defining $\varphi$ by solving $P=0$, we obtain a solution of \eqref{cond1.2} and deduce the claimed inequality from Corollary \ref{cor:gradest}.

Subsequent to \cite{Mod85} and \cite{CGS94}, many other authors have considered related problems in $\R^n$, e.g. \cite{DG02,FV13,FV14,CFV14,FV10,CFV12} and on Riemannian manifolds with non-negative Ricci curvature \cite{FV11,FSV13,MR10,MW14}. The proofs of these results involve a $P$-function constructed from the solution $u$ and the first derivatives $Du$.
The gradient estimates amount to pointwise inequalities on the $P$-function, deduced by application of the maximum principle to an equation resulting from differentiation of the equation satisfied by $u$.  

In this paper we use a different approach, deriving the two-point estimate of Theorem \ref{thm1} and then deducing the gradient estimate of Corollary \ref{cor:gradest}.  We refer the reader to the papers \cite{AC09,AC13} and the recent survey \cite{And14} which gives a discussion of the application of two-point estimates in a variety of geometric contexts.  This approach has several advantages:  First, the estimate is easily motivated, since it gives a comparison between an arbitrary solution and a particular symmetric solution on a product manifold.  This implies in particular that the resulting estimate is sharp.  Second, the details of the proof are comparatively simple and geometric compared to the calculations involved in the $P$-function approach.  For example, this increased simplicity allows us to treat rather arbitrary isotropic equations, and not only those which arise as Euler-Lagrange equations for variational problems, which was the case for the results obtained using the $P$-function method:  It is clear from our argument that although the variational or divergence structure seemed important in the $P$-function computations, it is in fact irrelevant to the validity of the estimate.
Finally, our argument does not involve differentiating the equation, and consequently applies (using ideas from \cite{Li16,LW16}) with minimal regularity requirements on the solution $u$, corresponding to the viscosity solution requirement in Theorem \ref{thm1}.   Throughout the paper we use the terminology of viscosity solutions from \cite{CIL92}.

We now describe the additional results we obtain using this method:

The simplicity of our method allows us to extend the proof to a more general situation of manifolds with a negative lower Ricci curvature bound, again by  comparison with a suitable one-dimensional ``warped product'' solution:

\begin{thm}\label{thm2}
Let $(M,g)$ be a compact Riemannian manifold (possibly with boundary, in which case we assume the boundary is locally convex and impose the Neumann boundary condition), and $\kappa< 0$ such that $\text{\rm Ric}\geq (n-1)\kappa g$.  Let $\bar M = N\times[a,b]$ and $\bar g = ds^2 + \rho(s)^2g^N$ be such that $\text{\rm Ric}(\partial_s,\partial_s)=(n-1)\kappa$ and $\rho'/\rho$ is strictly increasing, and let $\bar u(x,s) = \varphi(s)$ be a solution of \eqref{eq-G} on $\bar M$, where $\varphi$ is an increasing $C^2$ diffeomorphism from $[a,b]$ to $[c,d]$.  Let $\psi$ be the inverse function of $\varphi$.  Let $u$ be a viscosity solution of \eqref{eq-G} on $M$ with range contained in $[c,d]$.  Then for all $x$ and $y$ in $M$ we have
$$
\psi(u(y))-\psi(u(x))-d^M(x,y)\leq 0.
$$
\end{thm}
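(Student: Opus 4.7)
Plan: I will mimic the two-point argument from Theorem~1, replacing the flat 1-D model by the warped-product model $(\bar M,\bar g)$, with the curvature lower bound controlled via the second variation of arc length. Define $Z(x,y):=\psi(u(y))-\psi(u(x))-d^M(x,y)$ on $M\times M$, and assume for contradiction that $\max Z>0$ is attained at $(x_0,y_0)$. The viscosity regularisation from Theorem~1 carries over and allows us to assume $y_0\notin\mathrm{Cut}(x_0)$. Let $\gamma\colon[0,d_0]\to M$ be the minimising unit-speed geodesic and set $s_i:=\psi(u(p_i))$ with $(p_0,p_1)=(x_0,y_0)$, so that $s_1-s_0>d_0$. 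The first-order conditions at the maximum give $Du(p_i)=\varphi'(s_i)\,\dot\gamma|_{p_i}$; in particular $|Du(p_i)|=\varphi'(s_i)$, which pins the coefficients $\alpha_i:=\alpha(u(p_i),\varphi'(s_i))$ and $\beta_i$ to match those of the 1-D model on $\bar M$ at the corresponding $s$.

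For the second-order analysis, consider variations $(x(t),y(t))=(\exp_{x_0}(tV),\exp_{y_0}(t\tilde V))$; since $(x_0,y_0)$ is a maximum, $Z''(0)\leq 0$. Write $A_i:=\operatorname{Hess}(u)(\widehat{Du},\widehat{Du})|_{p_i}$. The radial choice $V=\dot\gamma(0)$, $\tilde V=\dot\gamma(d_0)$ yields
\[
\frac{A_1-\varphi''(s_1)}{\varphi'(s_1)}\leq\frac{A_0-\varphi''(s_0)}{\varphi'(s_0)}.\qquad(\mathrm{R})
\]
For tangential directions, pick an orthonormal frame $\{V_i\}_{i<n}\perp\dot\gamma(0)$ and set $\tilde V_i=c\,E_i(d_0)$ where $E_i$ is parallel transport along $\gamma$ and $c>0$ is to be chosen, using test fields $W_i(s)=f(s)E_i(s)$ with $f(0)=1$, $f(d_0)=c$. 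The second-variation formula combined with $\operatorname{Ric}\geq(n-1)\kappa$ bounds $\sum_{i<n}\frac{d^2}{dt^2}d(x(t),y(t))$ above by $(n-1)\int_0^{d_0}[(f')^2-\kappa f^2]\,ds$; invoking \eqref{eq-G} to substitute $\sum_{i<n}\operatorname{Hess}(u)(V_i,V_i)=-(\alpha A+q)/\beta$ at each endpoint gives
\[
\frac{\alpha_0 A_0+q_0}{\beta_0\varphi'(s_0)}-c^2\frac{\alpha_1 A_1+q_1}{\beta_1\varphi'(s_1)}\leq(n-1)\int_0^{d_0}\bigl[(f')^2-\kappa f^2\bigr]\,ds.\qquad(\mathrm{T})
\]

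The key new ingredient is the ODE for $\varphi$ on $\bar M$: a direct Hessian computation for $\bar u(x,s)=\varphi(s)$ in the warped-product metric yields $\alpha(\varphi,\varphi')\varphi''+(n-1)\tfrac{\rho'}{\rho}\beta(\varphi,\varphi')\varphi'+q(\varphi,\varphi')=0$, so that $(\alpha_i\varphi''(s_i)+q_i)/(\beta_i\varphi'(s_i))=-(n-1)(\rho'/\rho)(s_i)$. Substituting this into (T), writing $a_i:=(A_i-\varphi''(s_i))/\varphi'(s_i)$, and selecting $c$ so that $c^2\alpha_1/\beta_1=\alpha_0/\beta_0$, the remaining Hessian contribution is $(\alpha_0/\beta_0)(a_0-a_1)\geq 0$ by (R) and can be dropped, leaving
\[
(n-1)\bigl[c^2(\rho'/\rho)(s_1)-(\rho'/\rho)(s_0)\bigr]\leq(n-1)\int_0^{d_0}\bigl[(f')^2-\kappa f^2\bigr]\,ds.
\]
Taking $f$ to be the Jacobi field $f''+\kappa f=0$ with $f(0)=1$, $f(d_0)=c$ reduces the right side to $(n-1)[cf'(d_0)-f'(0)]$. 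Since both $\rho'/\rho$ and $f'/f$ satisfy the Riccati equation $u'+u^2+\kappa=0$, the strict monotonicity hypothesis on $\rho'/\rho$ combined with $s_1-s_0>d_0$ then produces the contradiction by a one-dimensional ODE comparison.

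The main obstacle is this final 1-D comparison: the Jacobi data, the warping function, and the nontrivial scaling $c$ (which depends on the ratio $\alpha/\beta$ at both endpoints) couple in a delicate way, and verifying the contradiction requires carefully exploiting the Riccati structure together with the strict monotonicity assumption on $\rho'/\rho$. The viscosity-regularity reduction is inherited verbatim from Theorem~1, and the Neumann boundary case is handled by the usual device of using local convexity of $\partial M$ to push a boundary maximum of $Z$ into the interior.
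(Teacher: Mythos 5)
Your overall strategy (two-point maximum, second variation of arclength with a transverse amplitude $f$, substitution of the warped-product ODE $\alpha\varphi''+(n-1)\tfrac{\rho'}{\rho}\beta\varphi'+q=0$) is the same as the paper's, but the way you handle the radial Hessian terms introduces a genuine gap. You fold the two radial conditions into the single inequality (R) and then must cancel the excess terms by matching weights, which forces $c^{2}=\frac{\alpha_0/\beta_0}{\alpha_1/\beta_1}$. This pins $c$ to a quantity determined by the PDE coefficients at the two endpoints, with no relation to $\rho$, and the final one-dimensional comparison you need,
\[
c^{2}\frac{\rho'}{\rho}(s_1)-\frac{\rho'}{\rho}(s_0)\;>\;c\,f'(d_0)-f'(0),
\qquad f''+\kappa f=0,\ f(0)=1,\ f(d_0)=c,
\]
is simply false for general $c>0$: taking $\kappa=-1$, $\rho=\cosh$, $s_0=0$, $d_0=1$, $s_1=1.5$ and $c=0.1$ gives left side $\approx 0.009$ and right side $\approx 1.16$, so no contradiction arises. (The Jacobi choice of $f$ already minimizes the right side, so no better $f$ rescues this.) Thus the step you flag as the ``main obstacle'' is not merely delicate — with your normalization of $c$ it cannot be completed.

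The paper avoids this by never coupling the radial terms across the two points: varying $x$ and $y$ separately (in the viscosity setting, via the vanishing of the radial entries of $D^2\tilde d$ in the Crandall--Ishii matrix inequality) gives $u_{nn}(y_0)\le\varphi''(s_1)$ and $u_{nn}(x_0)\ge\varphi''(s_0)$, i.e.\ $a_1\le 0\le a_0$ individually, so these terms can be discarded no matter what nonnegative weights multiply them. That leaves the ratio $f(l)/f(0)$ completely free, and the decisive choice is $f(s)=\rho(z_{x_0}+\varepsilon_0+s)$: then $f''+\kappa f=0$ kills the integral, the endpoint weights become $\rho^2(s_0+\varepsilon_0)$ and $\rho^2(s_1)$ (note $s_1=s_0+\varepsilon_0+l$), and after using the strict monotonicity of $\rho'/\rho$ to shift the $x$-endpoint evaluation from $s_0$ to $s_0+\varepsilon_0$, everything telescopes to $\rho^2\bigl[\tfrac{\rho'}{\rho}(s_0)-\tfrac{\rho'}{\rho}(s_0+\varepsilon_0)\bigr]<0$, the desired contradiction. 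To repair your argument, replace (R) by the two separate radial inequalities (or work with the jets as in the paper, which also fixes the cavalier use of $\operatorname{Hess}u$ for a viscosity solution), free up $c$, and insert the $\varepsilon_0$-shift; without that shift the exact cancellation at the $x$-endpoint does not close either.
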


As we explain in Section \ref{sec4}, the assumption that $\bar u$ is a solution of \eqref{eq-G} is equivalent to a certain elliptic equation for $\varphi$ which involves $\alpha$ and $\beta$ and also the warping factor $\rho$.


\begin{corr}\label{cor:gradest2}
Under the assumptions of Theorem \ref{thm2}, $|Du(x)|\leq \varphi'(\psi(u(x)))$ for all $x\in M$.
\end{corr}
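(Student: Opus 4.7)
The plan is to derive Corollary \ref{cor:gradest2} as a limiting case of Theorem \ref{thm2} by letting $y$ tend to $x$. First I would symmetrise the two-point bound: applying Theorem \ref{thm2} with the roles of $x$ and $y$ swapped gives $\psi(u(x))-\psi(u(y))\leq d^M(y,x)=d^M(x,y)$, so combined with the original statement we have $|\psi(u(y))-\psi(u(x))|\leq d^M(x,y)$ for all $x,y\in M$. In other words, $\psi\circ u$ is 1-Lipschitz on $M$. Since $\varphi:[a,b]\to[c,d]$ is an increasing $C^2$ diffeomorphism, its inverse $\psi$ is $C^2$ with $\psi'(z)=1/\varphi'(\psi(z))>0$ throughout $[c,d]$.

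Next I would pass to the infinitesimal estimate. At any point $x\in M$ where $u$ is classically differentiable (the case $Du(x)=0$ is trivial since $\varphi'>0$), choose $v=Du(x)/|Du(x)|\in T_xM$ and set $y_t=\exp_x(tv)$. Because $d^M(x,y_t)=t$ for sufficiently small $t>0$, the Lipschitz bound yields $\psi(u(y_t))-\psi(u(x))\leq t$; dividing by $t$, sending $t\to 0^+$ and applying the chain rule gives $\psi'(u(x))|Du(x)|\leq 1$, which rearranges to $|Du(x)|\leq\varphi'(\psi(u(x)))$.

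For a viscosity solution the interpretation is handled identically. Theorem \ref{thm2} is a pointwise statement for the continuous function $u$, so the Lipschitz estimate on $\psi\circ u$ is genuine, and hence by Rademacher's theorem the bound holds almost everywhere in the classical sense. More generally, for any $p$ in the viscosity superdifferential $D^+u(x)$ or subdifferential $D^-u(x)$, the smooth increasing change of variable $\psi$ transfers $p$ to $\psi'(u(x))p\in D^{\pm}(\psi\circ u)(x)$, and the 1-Lipschitz property forces $|\psi'(u(x))p|\leq 1$, giving $|p|\leq\varphi'(\psi(u(x)))$. There is no substantial obstacle: all of the analytic content lives inside Theorem \ref{thm2}, and the corollary reduces to a mechanical passage to the limit together with a brief reconciliation with the viscosity framework via the chain rule applied to the smooth monotone function $\psi$.
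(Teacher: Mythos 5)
Your proposal is correct and follows essentially the same route as the paper, which deduces the corollary from Theorem \ref{thm2} precisely by letting $y$ approach $x$ along the gradient direction and dividing by the distance; your additional remarks (symmetrising the two-point bound, Rademacher, and the semijet interpretation for viscosity solutions) are harmless elaborations of that same limiting argument, and if anything slightly more careful than the paper, which in its Section~4 version (Corollary \ref{corr4.1}) simply assumes $u\in C^1(M)$.
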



It is an interesting question whether such a result holds also in this generality in the case $\kappa>0$.  Our proof does not appear to apply in that situation.  However we discuss a different argument which works for general $\kappa\in \R $ under somewhat more stringent assumptions, in Section \ref{sec5}.

In addition we notice that the method of two-point functions also applies for equations on Finsler measure spaces with nonnegative weighted Ricci curvature, where we only consider the variational equations corresponding to \eqref{eq1}. We address this problem in Section \ref{sec6}. Then in Section~\ref{sec7}, we discuss the resulting gradient estimates of Modica type and related rigidity results.

In the final section (Section~\ref{sec8}) we present various extensions of the two-point functions method. For complete noncompact Riemannian manifolds with bounded geometry, the ``translation invariance'' of Equation~\eqref{eq-G} plays a key role; while for compact Riemannian or Finsler manifolds with boundary, the estimates near the boundary for solutions with Dirichlet boundary condition are crucial. Besides, we also consider anisotropic PDEs on certain possibly unbounded domains with boundary in $\R^n$, a case where the $P$-function method seems hard to apply. See Section~\ref{sec8} for the precise statements.

The paper is built up as follows. In Section \ref{sec2} we recall background material including the definition of viscosity solutions, maximum principles for semicontinuous functions on manifolds, and the first and second variation formulas for the arclength of a curve. In Section \ref{sec3} we give the proof of Theorem \ref{thm1} and Corollary \ref{cor:gradest}.  The more general result of Theorem \ref{thm2} is proved in Section \ref{sec4}. Section \ref{sec5} discusses a different argument which applies for general lower bounds on the Ricci curvature. Section \ref{sec6} is devoted to the setting of compact Finsler manifolds with nonnegative weighted Ricci curvature. Section~\ref{sec7} gives the pointwise gradient estimates of Modica type and some rigidity results. The final section collects extensions as stated above.

\section{Preliminaries}\label{sec2}

\subsection{Definition of viscosity solutions on manifolds}

Let $M$ be a Riemannian manifold. We use the following notations:
\begin{align*}
USC(M)&=\{u:M\rightarrow \R| u\text{ is upper semicontinuous}\},\\
LSC(M)&=\{u:M\rightarrow \R| u\text{ is lower semicontinuous}\}.
\end{align*}
Next we introduce the semijets on manifolds.
\begin{defn}
For a function $u\in USC(M)$, the second order superjet of $u$ at a point $x_0\in M$ is defined by
\begin{align*}
\mathcal{J}^{2,+}u(x_0)&:=\left\{(D\varphi(x_0),D^2\varphi(x_0)):\varphi\in C^2(M),\text{ such that }u-\varphi\right.\\
                       &\qquad\left.\text{attains a local maximum at }x_0\right\}.
\end{align*}
For $u\in LSC(M)$, the second order subjet of $u$ at $x_0\in M$ is defined by
\begin{align*}
\mathcal{J}^{2,-}u(x_0)&:=-\mathcal{J}^{2,+}(-u)(x_0).
\end{align*}
\end{defn}

We also define the closures of $\mathcal{J}^{2,+}u(x_0)$ and $\mathcal{J}^{2,-}u(x_0)$ by
\begin{align*}
\bar{\mathcal{J}}^{2,+}u(x_0)&=\{(p,X)\in T_{x_0}M\times Sym^2(T^*_{x_0}M)|\text{there is a sequence }(x_j,p_j,X_j)\\
                             &\qquad\text{such that }(p_j,X_j)\in \mathcal{J}^{2,+}u(x_j)\\
                             &\qquad\text{and }(x_j,u(x_j),p_j,X_j)\rightarrow (x_0,u(x_0),p,X)\text{ as }j\rightarrow \infty\}.\\
\bar{\mathcal{J}}^{2,-}u(x_0)&=-\bar{\mathcal{J}}^{2,+}(-u)(x_0).
\end{align*}
Now we can define the viscosity solution for the general equation
\begin{equation}\label{eq-F}
F(x,u,Du,D^2u)=0
\end{equation}
on $M$: Assume $F\in C(M\times \R\times TM\times Sym^2(T^*M))$ is degenerate elliptic, i.e.
\begin{equation*}
F(x,r,p,X)\leq F(x,r,p,Y),\text{ whenever }X\leq Y.
\end{equation*}
\begin{defn}\begin{enumerate}
  \item A function $u\in USC(M)$ is a viscosity subsolution of \eqref{eq-F} if for all $x\in M$ and $(p,X)\in \mathcal{J}^{2,+}u(x)$,
\begin{equation*}
F(x,u(x),p,X)\geq 0.
\end{equation*}
  \item A function $u\in LSC(M)$ is a viscosity supersolution of \eqref{eq-F} if for all $x\in M$ and $(p,X)\in \mathcal{J}^{2,-}u(x)$,
\begin{equation*}
F(x,u(x),p,X)\leq 0.
\end{equation*}
  \item A viscosity solution of \eqref{eq-F} is a continuous function which is both a viscosity subsolution and a viscosity supersolution of \eqref{eq-F}.
\end{enumerate}
\end{defn}

\subsection{Maximum principle for semicontinuous functions}

\begin{thm}[Theorem 3.2 in \cite{CIL92}]
Let $M_1^{n_1}$,\dots,$M_k^{n_k}$ be Riemannian manifolds, and $\Omega_i\subset M_i$ open subsets. Let $u_i\in USC(\Omega_i)$ and $\varphi\in C^2(\Omega_1\times\dots\times \Omega_k)$. Suppose the function
\begin{equation*}
w(x_1,\dots,x_k):=u_1(x_1)+\dots+u_k(x_k)-\varphi(x_1,\dots,x_k)
\end{equation*}
attains a maximum at $(\hat{x}_1,\dots,\hat{x}_k)$ on $\Omega_1\times\dots\times \Omega_k$. Then for each $\lambda>0$ there exists $X_i\in Sym^2(T^*_{\hat{x}_i}M_i)$ such that
\begin{equation*}
(D_{x_i}\varphi(\hat{x}_1,\dots,\hat{x}_k),X_i)\in \bar{\mathcal{J}}^{2,+}u_i(\hat{x}_i)\text{ for }i=1,\dots,k,
\end{equation*}
and the block diagonal matrix with entries $X_i$ satisfies
\begin{equation*}
-(\frac{1}{\lambda}+||A||)I\leq \begin{pmatrix}X_1&\cdots&0\\ \vdots&\ddots&\vdots\\0&\cdots&X_k\end{pmatrix}\leq A+\lambda A^2,
\end{equation*}
where $A=D^2\varphi(\hat{x}_1,\dots,\hat{x}_k)$.
\end{thm}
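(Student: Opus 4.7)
The plan is to reduce the statement to the Euclidean version of Theorem~3.2 in \cite{CIL92} by working in a product of normal coordinate charts at $(\hat{x}_1,\dots,\hat{x}_k)$. First I would fix, for each $i$, a normal coordinate chart centered at $\hat{x}_i$: a diffeomorphism from a Euclidean ball in $\R^{n_i}$ to a neighborhood of $\hat{x}_i$, obtained by applying $\exp_{\hat{x}_i}$ to an orthonormal basis of $T_{\hat{x}_i}M_i$. In such coordinates the metric satisfies $g_{ab}(\hat{x}_i)=\delta_{ab}$ and all Christoffel symbols vanish at the center. The product of these charts furnishes a smooth chart of $\Omega_1\times\cdots\times\Omega_k$ near $(\hat{x}_1,\dots,\hat{x}_k)$, in which the pulled back function $\td w$ still attains a local maximum at the origin.

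Next I would apply the classical Euclidean CIL maximum principle to the pulled back $\td u_i\in USC$ and $\td \varphi\in C^2$. This yields, for every $\lambda>0$, symmetric matrices $X_i$ in the chart such that $(D_{x_i}\td\varphi(0),X_i)$ lies in the Euclidean version of $\bar{\cal{J}}^{2,+}\td u_i(0)$, and such that the block diagonal matrix $\operatorname{diag}(X_1,\dots,X_k)$ satisfies the announced inequality with $A=D^2\td\varphi(0)$ computed in the product chart. Via the identification $\R^{n_i}\cong T_{\hat{x}_i}M_i$ coming from the chosen orthonormal basis, each $X_i$ becomes an element of $Sym^2(T^*_{\hat{x}_i}M_i)$, and the inequality transfers unchanged.

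The step that deserves most care is verifying that the Euclidean and intrinsic second order superjets coincide at the center of a normal chart, and that this compatibility survives the limit defining $\bar{\cal{J}}^{2,+}$. If $\eta\in C^2(M_i)$ touches $u_i$ from above at $\hat{x}_i$, then in a normal chart at $\hat{x}_i$ the Euclidean Hessian of $\eta$ at the origin differs from the covariant Hessian by terms that are linear combinations of Christoffel symbols with $D\eta(\hat{x}_i)$; these vanish because the chart is normal at $\hat{x}_i$. The same argument shows that $D^2\td\varphi(0)$ in the product chart equals the intrinsic Hessian of $\varphi$ as a symmetric bilinear form on $T_{(\hat{x}_1,\dots,\hat{x}_k)}(M_1\times\cdots\times M_k)$, so $\|A\|$ is coordinate invariant. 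The main obstacle is bookkeeping rather than new analysis: I must reconcile the approximation used in the Euclidean proof (via sup convolutions and Alexandrov's theorem) with the intrinsic definition of $\bar{\cal{J}}^{2,+}u_i$, verifying that a Euclidean approximating sequence $(x_j,p_j,X_j)\to (\hat{x}_i,D_{x_i}\varphi,X_i)$ gives rise, under the chart and for $j$ large, to a valid intrinsic approximating sequence for $\bar{\cal{J}}^{2,+}u_i(\hat{x}_i)$.
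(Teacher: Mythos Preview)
The paper does not prove this statement: it is quoted verbatim as background (``Theorem~3.2 in \cite{CIL92}'') in the preliminaries section, with no argument supplied. So there is no ``paper's own proof'' to compare against; the authors simply invoke the Crandall--Ishii--Lions result as a black box and use it in Section~\ref{sec3}.

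That said, your outline is the standard and correct route for transporting the Euclidean theorem to manifolds: pull back through a product of normal charts, apply the Euclidean CIL theorem, and use that at the center of a normal chart the coordinate and covariant first and second derivatives agree. The one genuinely delicate point is the one you flag at the end. The Euclidean theorem produces $(p_j,X_j)\in\mathcal{J}^{2,+}\td u_i(x_j)$ along a sequence $x_j\to 0$, and at $x_j\neq 0$ the Christoffel symbols do not vanish, so the intrinsic superjet element at $x_j$ differs from $(p_j,X_j)$ by a correction of the form $\Gamma(x_j)\!\ast\! p_j$. You should state explicitly that this correction tends to zero because $\Gamma(x_j)\to 0$ (normal coordinates) and $p_j\to D_{x_i}\varphi$ is bounded; hence the intrinsic closure $\bar{\mathcal{J}}^{2,+}u_i(\hat x_i)$ receives the same limit $(D_{x_i}\varphi,X_i)$. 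With that sentence added, the argument is complete.
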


\subsection{First and second variation formulae for arclength}

Let $\gamma_0:[0,l]\rightarrow M$ be a geodesic in $M$ parametrised by arc length. Suppose $\gamma(\e,s)$ is any smooth variation of $\gamma_0(s)$ with $\e\in (-\e_0,\e_0)$. Then the first variation formula for arclength is
\begin{equation*}
\f{\pt}{\pt \e}\bigg|_{\e=0}L(\gamma(\e,\cdot))=\langle \gamma_s,\gamma_\e\rangle\big|_0^l,
\end{equation*}
where $\gamma_s$ is the unit tangent vector of $\gamma_0$ and $\gamma_\e=\f{\pt}{\pt \e}\gamma$ is the variational vector field.

Furthermore, the second variation formula is given by
\begin{equation*}
\f{\pt^2}{\pt \e^2}\bigg|_{\e=0}L(\gamma(\e,\cdot))=\int_0^l\left( |\n_{\gamma_s} (\gamma_\e^\perp)|^2-R(\gamma_s,\gamma_\e,\gamma_\e,\gamma_s)\right)ds+\langle \gamma_s,\n_{\gamma_\e} \gamma_\e \rangle\big|_0^l,
\end{equation*}
where $\gamma_\e^\perp$ means the normal part of the variational vector. Here and in the sequel we use the convention on the Riemannian curvature tensor $R$ such that $Ric(X,Y)=tr_g(R(X,\cdot,\cdot,Y))$ for $X,Y\in T_x M$.

\section{Riemannian manifolds with nonnegative Ricci curvature}\label{sec3}

First we prove the following modulus of continuity estimate, which implies Theorem \ref{thm1} immediately.
\begin{thm}\label{thm3.1}
Let $(M^n,g)$ be a compact Riemannian manifold with $Ric\geq 0$ and $u$ be a viscosity solution of Equation \eqref{eq-G}. Suppose the barrier $\varphi:[a,b]\rightarrow [\inf u,\sup u]$ satisfies
\begin{align}
&\qquad\qquad\varphi'>0,\label{cond3.1}\\
\f{d}{dz}&\left(\frac{q(\varphi,\varphi')+\varphi''\alpha(\varphi,\varphi')}{\varphi'\beta(\varphi,\varphi')}\right)<0.\label{cond3.2}
\end{align}
Moreover let $\psi$ be the inverse of $\varphi$, i.e. $\psi(\varphi(z))=z$. Then we have
\begin{equation}\label{eq3.1}
\psi(u(y))-\psi(u(x))-d(x,y)\leq 0,\: \forall x,y\in M.
\end{equation}
\end{thm}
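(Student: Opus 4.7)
I would argue by contradiction via a two-point maximum principle. Let $\Phi(x,y):=\psi(u(y))-\psi(u(x))-d(x,y)$ on $M\times M$, and suppose $\sup\Phi>0$. By compactness and continuity, and since $\Phi$ vanishes on the diagonal, the supremum is attained at some $(x_0,y_0)$ with $x_0\neq y_0$. Fix a minimizing unit-speed geodesic $\gamma:[0,l]\to M$ from $x_0$ to $y_0$ and a parallel orthonormal frame $\{e_1(s),\dots,e_n(s)=\gamma_s(s)\}$ along $\gamma$, and set $c_0:=\psi(u(x_0))<\psi(u(y_0))=:c_1$. If $(x_0,y_0)$ lies in the cut locus of $d$, I replace $d$ in a neighborhood by $\tilde d(x,y):=d(x,p_1)+(s_2-s_1)+d(p_2,y)$ for interior points $p_i=\gamma(s_i)$ with $0<s_1<s_2<l$; the triangle inequality gives $\tilde d\geq d$ with equality at $(x_0,y_0)$, and $\tilde d$ is smooth there, so one can work with $\tilde d$ instead of $d$.

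Next I apply the maximum principle for semicontinuous functions from Section~\ref{sec2} with $u_1=-\psi\circ u$, $u_2=\psi\circ u$, and $\varphi=d$: for each $\lambda>0$ there exist matrices $X_1\in\mathrm{Sym}^2(T^*_{x_0}M)$ and $X_2\in\mathrm{Sym}^2(T^*_{y_0}M)$ with $(-\gamma_s(0),X_1)\in\bar{\mathcal{J}}^{2,+}(-\psi\circ u)(x_0)$, $(\gamma_s(l),X_2)\in\bar{\mathcal{J}}^{2,+}(\psi\circ u)(y_0)$, and
\begin{equation*}
\begin{pmatrix} X_1 & 0 \\ 0 & X_2 \end{pmatrix}\leq A+\lambda A^2,\qquad A:=D^2 d(x_0,y_0).
\end{equation*}
Because $\psi$ is a $C^2$ increasing diffeomorphism, the chain rule for semijets translates these to $(\varphi'(c_0)\gamma_s(0),\,\varphi''(c_0)\gamma_s(0)\otimes\gamma_s(0)-\varphi'(c_0)X_1)\in\bar{\mathcal{J}}^{2,-}u(x_0)$ and analogously $(\varphi'(c_1)\gamma_s(l),\,\varphi''(c_1)\gamma_s(l)\otimes\gamma_s(l)+\varphi'(c_1)X_2)\in\bar{\mathcal{J}}^{2,+}u(y_0)$. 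Feeding these into the viscosity super- and subsolution inequalities for \eqref{eq-G}, writing $\alpha_i,\beta_i,q_i$ for the coefficients at $(\varphi(c_i),\varphi'(c_i))$ and $F(z):=[q(\varphi,\varphi')+\varphi''\alpha(\varphi,\varphi')]/[\varphi'\beta(\varphi,\varphi')]$, then dividing by $\beta_i\varphi'(c_i)>0$, yields
\begin{equation*}
\tfrac{\alpha_0}{\beta_0}(X_1)_{nn}+\mathrm{tr}^\perp X_1 \geq F(c_0),\qquad \tfrac{\alpha_1}{\beta_1}(X_2)_{nn}+\mathrm{tr}^\perp X_2 \geq -F(c_1),
\end{equation*}
where the indices refer to the parallel frame $\{e_j\}$ and $\mathrm{tr}^\perp=\sum_{j=1}^{n-1}$.

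The geometric heart of the argument is to exploit the matrix bound through three families of test vectors. The one-sided choices $(v,w)=(e_n,0)$ and $(0,e_n(l))$ give $(X_1)_{nn},(X_2)_{nn}\leq O(\lambda)$, because $d(\gamma(t),y_0)=l-t$ and $d(x_0,\gamma(l+t))=l+t$ are linear in $t$. For each $j<n$ the choice $(e_j,e_j(l))$, combined with the second variation of arc length applied to the parallel field $e_j$ (whose covariant derivative along $\gamma$ vanishes, and whose boundary term vanishes since the endpoints follow geodesics), gives $(X_1)_{jj}+(X_2)_{jj}\leq -\int_0^l R(\gamma_s,e_j,e_j,\gamma_s)\,ds+O(\lambda)$; summing over $j<n$ and using $\mathrm{Ric}\geq 0$ yields $\mathrm{tr}^\perp X_1+\mathrm{tr}^\perp X_2\leq O(\lambda)$. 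Since $\alpha_i\geq 0$ and $\beta_i>0$, the radial bounds force $\tfrac{\alpha_i}{\beta_i}(X_i)_{nn}\leq O(\lambda)$, and the two viscosity inequalities above reduce to $\mathrm{tr}^\perp X_1\geq F(c_0)-O(\lambda)$ and $\mathrm{tr}^\perp X_2\geq -F(c_1)-O(\lambda)$. Adding and sending $\lambda\to 0^+$ gives $F(c_0)\leq F(c_1)$, contradicting $F(c_0)>F(c_1)$, which follows from the strict monotonicity \eqref{cond3.2} together with $c_0<c_1$.

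The main obstacle I anticipate is the asymmetric structure of \eqref{eq-G}: the ratios $\alpha_0/\beta_0$ and $\alpha_1/\beta_1$ generally differ at the two extremal points, so the natural symmetric test vector $(e_n,e_n(l))$ would only yield $(X_1)_{nn}+(X_2)_{nn}\leq O(\lambda)$, which is insufficient to close the argument. The key device is to use the one-sided test vectors $(e_n,0)$ and $(0,e_n(l))$ to obtain individual bounds $(X_i)_{nn}\leq O(\lambda)$; combined with $\alpha\geq 0$, this discards the $\alpha$-radial contribution in the limit and reduces matters to a purely transverse (Ricci) inequality, which is exactly where the curvature hypothesis enters.
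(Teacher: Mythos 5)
Your argument in the case where $d$ is smooth at $(x_0,y_0)$ is essentially the paper's proof (with $f\equiv 1$): the paper packages your test vectors into the positive semidefinite weight matrix $W$, whose transverse block corresponds to your coupled vectors $(e_j(0),e_j(l))$ and whose radial entries, weighted by $\alpha/\beta\ge 0$, correspond to your one-sided vectors $(e_n(0),0)$ and $(0,e_n(l))$; the conclusion $F(c_0)\le F(c_1)$ contradicting \eqref{cond3.2} is identical. The genuine gap is in your treatment of the cut locus. With your replacement $\tilde d(x,y)=d(x,p_1)+(s_2-s_1)+d(p_2,y)$, the matrix $A$ produced by the semicontinuous maximum principle is $D^2\tilde d(x_0,y_0)$, not $D^2d$, and the estimate you then invoke, namely
\begin{equation*}
D^2\tilde d\bigl((e_j(0),e_j(l)),(e_j(0),e_j(l))\bigr)\le -\int_0^l R(\gamma_s,e_j,e_j,\gamma_s)\,ds ,
\end{equation*}
is false for this $\tilde d$: the comparison curve $s\mapsto\exp_{\gamma(s)}(te_j(s))$ through the displaced endpoints does not pass through the fixed anchor points $p_1,p_2$, so the length of that curve does not dominate $\tilde d$ of the displaced endpoints. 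What one actually gets, estimating $\mathrm{Hess}\,d(\cdot,p_1)$ at $x_0$ and $\mathrm{Hess}\,d(p_2,\cdot)$ at $y_0$ by index forms on the two subsegments with variation fields forced to vanish at $p_1$, resp. $p_2$, is
\begin{equation*}
\mathrm{tr}^\perp X_1+\mathrm{tr}^\perp X_2\;\le\;\frac{n-1}{s_1}+\frac{n-1}{l-s_2}+O(\lambda),
\end{equation*}
and since $0<s_1<s_2<l$ this error is at least $4(n-1)/l$, a fixed positive quantity that cannot be sent to zero. The final inequality then reads $F(c_0)-F(c_1)\le 4(n-1)/l$, which is no contradiction, so your proof does not close whenever $y_0$ is a cut point of $x_0$ (which can certainly happen at the maximum of the two-point function).

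The cut-locus case is not a removable technicality, and the paper's construction is designed precisely to handle it: $\tilde d(x,y)$ is defined as the length of an explicit interpolating curve $\exp_{\gamma_0(s)}\bigl(\frac{l-s}{l}\sum_i a_i(x)\frac{f(s)}{f(0)}e_i(s)+\frac{s}{l}\sum_i b_i(y)\frac{f(s)}{f(l)}e_i(s)\bigr)$, which is smooth in $(x,y)$ near $(x_0,y_0)$ regardless of cut points, satisfies $\tilde d\ge d$ with equality at $(x_0,y_0)$, and—crucially—its Hessian in the coupled transverse directions is exactly the second variation of arclength along the \emph{whole} geodesic with the parallel (or $f$-weighted) field, so the Ricci integral over $[0,l]$ survives intact and the radial directions still contribute zero. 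If you replace your three-segment smoothing by this curve-length upper support function (or any smoothing that interpolates along the full geodesic rather than anchoring at interior points), the rest of your argument goes through verbatim.
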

By letting $y$ approach $x$, we get the following gradient bound.
\begin{corr}\label{corr3.1}
Under the conditions of Theorem \ref{thm3.1}, if moreover $u\in C^1(M)$, then for every $x\in M$ we have
\begin{equation*}
|\n u(x)|\leq \varphi'(\psi(u(x))).
\end{equation*}
\end{corr}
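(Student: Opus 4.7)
The plan is to pass from the two-point estimate in Theorem \ref{thm3.1} to the pointwise gradient bound by letting $y$ approach $x$ along a geodesic. Fix $x \in M$ and a unit tangent vector $v \in T_x M$, and let $\gamma(t) = \exp_x(tv)$ be the unit-speed geodesic through $x$ with initial velocity $v$; then $d(x,\gamma(t)) = t$ for all sufficiently small $t \geq 0$. Plugging $y = \gamma(t)$ into \eqref{eq3.1} gives
\begin{equation*}
\psi(u(\gamma(t))) - \psi(u(x)) \leq t.
\end{equation*}

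Next I would use that $\varphi \in C^2$ with $\varphi' > 0$, so its inverse $\psi$ is $C^2$ on $[\inf u, \sup u]$ with $\psi'(z) = 1/\varphi'(\psi(z)) > 0$. Combined with the hypothesis $u \in C^1(M)$, the chain rule shows that $t \mapsto \psi(u(\gamma(t)))$ is differentiable at $t = 0$ with derivative $\psi'(u(x))\,\langle \nabla u(x), v \rangle$. Dividing the displayed inequality by $t > 0$ and letting $t \to 0^+$ therefore yields
\begin{equation*}
\psi'(u(x))\,\langle \nabla u(x), v \rangle \leq 1,
\end{equation*}
or equivalently $\langle \nabla u(x), v \rangle \leq \varphi'(\psi(u(x)))$. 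Taking the supremum over unit vectors $v \in T_x M$ — realised by $v = \nabla u(x)/|\nabla u(x)|$ when $\nabla u(x) \neq 0$ and trivial otherwise — gives the desired bound $|\nabla u(x)| \leq \varphi'(\psi(u(x)))$.

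There is no substantial obstacle; this is essentially a one-line consequence of Theorem \ref{thm3.1} together with the chain rule. The only minor subtlety is that if $u(x)$ equals $\inf u$ or $\sup u$, then $\psi(u(x))$ is an endpoint of $[a,b]$ where $\psi$ is only one-sidedly differentiable, but at such an interior extremum $\nabla u(x) = 0$, so the bound holds trivially. Note also that the resulting estimate is sharp: equality is attained by the barrier $\varphi$ itself, viewed as a solution on a one-dimensional model.
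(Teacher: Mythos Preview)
Your argument is correct and is exactly the approach indicated by the paper, which simply remarks that the gradient bound follows ``by letting $y$ approach $x$''; you have merely written out the details of that limit. The endpoint remark and the sharpness comment are nice touches but not needed for the argument.
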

Now we show how Theorem \ref{thm3.1} implies Theorem \ref{thm1}. Let $\varphi$ satisfy \eqref{cond1.2} and \eqref{cond1.1} in Theorem \ref{thm1}. Then for sufficiently small $\delta>0$, we can solve
\begin{align*}
&\alpha(\varphi_\delta,\varphi_\delta')\varphi_\delta''+q(\varphi_\delta,\varphi_\delta')=-\delta z\cdot \varphi_\delta'\cdot \beta(\varphi_\delta,\varphi_\delta'),\\
&\qquad \qquad\varphi_\delta(a)=\varphi(a),\quad \varphi_\delta'(a)=\varphi'(a),
\end{align*}
to get $\varphi_\delta$ which satisfies \eqref{cond3.1} and \eqref{cond3.2}. So by Theorem \ref{thm3.1} we have \eqref{eq3.1} for $\varphi_\delta$. Letting $\delta\rightarrow 0^+$, we finish the proof of Theorem \ref{thm1}.

Next we focus on proving Theorem \ref{thm3.1}. For that purpose we need a lemma about the behaviour of semijets when composed with an increasing function.

\begin{lem}\label{lem:incjet}
Let $u$ be a continuous function. Let $\varphi:\R\rightarrow \R$ be a $C^2$ function with $\varphi'>0$. Let $\psi$ be the inverse of $\varphi$, so that
\begin{equation*}
\varphi(\psi(u(x)))=u(x).
\end{equation*}
\begin{enumerate}
  \item Suppose $(p,X)\in \mathcal{J}^{2,+}(\psi\circ u)(x_0)$. Then
  \begin{equation*}
  (\varphi'p,\varphi''p\otimes p+\varphi'X)\in \mathcal{J}^{2,+}u(x_0),
  \end{equation*}
  where all derivatives of $\varphi$ are evaluated at $\psi\circ u(x_0)$.
  \item Suppose $(p,X)\in \mathcal{J}^{2,-}(\psi\circ u)(x_0)$. Then
  \begin{equation*}
  (\varphi'p,\varphi''p\otimes p+\varphi'X)\in \mathcal{J}^{2,-}u(x_0),
  \end{equation*}
  where all derivatives of $\varphi$ are evaluated at $\psi\circ u(x_0)$.
  \item The same holds if we replace the semijets by their closures.
\end{enumerate}
\end{lem}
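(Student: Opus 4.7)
The plan is to produce the test function for $u$ at $x_0$ directly by composing a test function for $\psi\circ u$ with $\varphi$, then deduce the subjet case by the same construction applied to local minima, and finally obtain (3) by a limiting argument using continuity of $\varphi'$ and $\varphi''$.

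For (1), suppose $(p,X)\in \mathcal{J}^{2,+}(\psi\circ u)(x_0)$, so there is $\eta\in C^2(M)$ with $D\eta(x_0)=p$, $D^2\eta(x_0)=X$, and $\psi\circ u-\eta$ attaining a local maximum at $x_0$. After adding a constant to $\eta$ (which does not affect its derivatives) we may assume $\eta(x_0)=\psi(u(x_0))$. Set $\td\eta:=\varphi\circ\eta$. The chain rule at $x_0$ gives
\begin{align*}
D\td\eta(x_0)&=\varphi'(\eta(x_0))\,D\eta(x_0)=\varphi' p,\\
D^2\td\eta(x_0)&=\varphi''(\eta(x_0))\,D\eta(x_0)\otimes D\eta(x_0)+\varphi'(\eta(x_0))\,D^2\eta(x_0)=\varphi''\,p\otimes p+\varphi' X,
\end{align*}
where $\varphi'$ and $\varphi''$ are evaluated at $\psi(u(x_0))$. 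The local maximum condition reads $\psi\circ u\leq\eta$ near $x_0$, with equality at $x_0$. Since $\varphi$ is strictly increasing, applying $\varphi$ preserves this inequality: $u=\varphi\circ\psi\circ u\leq \varphi\circ\eta=\td\eta$ near $x_0$ with equality at $x_0$. Hence $u-\td\eta$ attains a local maximum at $x_0$, and $(\varphi' p,\,\varphi''\,p\otimes p+\varphi' X)\in \mathcal{J}^{2,+}u(x_0)$.

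Part (2) is entirely parallel: if instead $\psi\circ u-\eta$ has a local minimum at $x_0$, then $\psi\circ u\geq\eta$ locally, and monotonicity of $\varphi$ yields $u\geq\td\eta$ near $x_0$ with equality at $x_0$, giving a local minimum of $u-\td\eta$ and hence the claimed membership in $\mathcal{J}^{2,-}u(x_0)$. (One could also obtain this from (1) applied to $-\psi\circ u$, but the direct monotonicity argument is cleaner.)

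For (3), suppose $(p,X)\in\bar{\mathcal{J}}^{2,+}(\psi\circ u)(x_0)$ with an approximating sequence $(x_j,p_j,X_j)$ satisfying $(p_j,X_j)\in \mathcal{J}^{2,+}(\psi\circ u)(x_j)$ and $(x_j,\psi\circ u(x_j),p_j,X_j)\ra(x_0,\psi\circ u(x_0),p,X)$. By (1), the pairs
\[
\Big(\varphi'(\psi(u(x_j)))\,p_j,\ \varphi''(\psi(u(x_j)))\,p_j\otimes p_j+\varphi'(\psi(u(x_j)))\,X_j\Big)
\]
lie in $\mathcal{J}^{2,+}u(x_j)$. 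Continuity of $\psi$, $u$, $\varphi'$, and $\varphi''$ ensures these pairs converge to $(\varphi' p,\,\varphi''\,p\otimes p+\varphi' X)$, and $u(x_j)=\varphi(\psi(u(x_j)))\to u(x_0)$, so the limit lies in $\bar{\mathcal{J}}^{2,+}u(x_0)$. The subjet version is identical with inequalities reversed. There is no real obstacle here; the only mild bookkeeping point is the normalization $\eta(x_0)=\psi(u(x_0))$, which is what makes $\td\eta$ actually touch $u$ at $x_0$ rather than merely lie above or below it.
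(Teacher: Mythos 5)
Your proposal is correct and follows essentially the same route as the paper: compose the test function with the increasing function $\varphi$, use monotonicity to preserve the touching from above (resp.\ below), compute the derivatives by the chain rule, and pass to closures by approximation. Your explicit normalization $\eta(x_0)=\psi(u(x_0))$ is a good bookkeeping step that the paper leaves implicit.
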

\begin{proof}
(1) Recall the definition of the superjet:
\begin{align*}
\mathcal{J}^{2,+}u(x_0)&:=\{(D\varphi(x_0),D^2\varphi(x_0)):\varphi\in C^2(M),\text{ such that }u-\varphi\\
                       &\text{attains a local maximum at }x_0\}.
\end{align*}
Assume $(p,X)\in \mathcal{J}^{2,+}(\psi\circ u)(x_0)$. Let $h$ be a $C^2$ function such that $\psi(u(x))-h(x)$ has a local maximum at $x_0$ and $(Dh,D^2h)(x_0)=(p,X)$. Since $\varphi$ is increasing, we know $u(x)-\varphi(h(x))=\varphi(\psi(u(x)))-\varphi(h(x))$ has a local maximum at $x_0$. So it follows that
\begin{equation*}
(\varphi'p,\varphi''p\otimes p+\varphi'X)\in \mathcal{J}^{2,+}u(x_0).
\end{equation*}

(2) is similar. (3) follows by approximation.
\end{proof}

\begin{proof}[Proof of Theorem \ref{thm3.1}]
The proof is by contradiction. Assume there exists some $\varepsilon_0>0$ such that
\begin{equation*}
\psi(u(y))-\psi(u(x))-d(x,y)\leq \varepsilon_0,
\end{equation*}
for any $x,y\in M$ and with equality for some $x_0\neq y_0$.

Next we want to replace $d(x,y)$ by a smooth function $\tilde{d}(x,y)$ on a neighbourhood of $(x_0,y_0)$.  To construct this we let $\gamma_0$ be the unit speed length-minimizing geodesic joining $x_0$ and $y_0$, with length $l=L(\gamma_0)$. Let $\{e_i(s)\}_{i=1}^n$ be parallel orthonormal vector fields along $\gamma_0$ with $e_n(s)=\gamma_0'(s)$ for each $s$.  Then in small neighbourhoods $U_{x_0}$ about $x_0$ and $U_{y_0}$ about $y_0$, there are mappings $x\mapsto (a_1(x),\dots,a_n(x))$ and $y\mapsto (b_1(y),\dots,b_n(y))$ defined by
\begin{equation*}
x=\exp_{x_0}(\sum_1^n a_i(x)e_i(0)),\quad y=\exp_{y_0}(\sum_1^n b_i(y)e_i(l)).
\end{equation*}
Then for some $C^2$ nonnegative function $f:[0,l]\rightarrow \R$ to be determined, we can define a smooth function $\tilde{d}(x,y)$ in $U_{x_0}\times U_{y_0}$ to be the length of the curve
\begin{equation*}
\exp_{\gamma_0(s)}\left(\frac{l-s}{l}\sum_1^na_i(x)\frac{f(s)}{f(0)}e_i(s)+\frac{s}{l}\sum_1^nb_i(y)\frac{f(s)}{f(l)}e_i(s)\right),\ s\in [0,l].
\end{equation*}
It is easy to see that $d(x,y)\leq \tilde{d}(x,y)$ in $U_{x_0}\times U_{y_0}$ and with equality at $(x_0,y_0)$. Therefore we have
\begin{equation*}
\psi(u(y))-\psi(u(x))-\tilde{d}(x,y)\leq \varepsilon_0,
\end{equation*}
for any $(x,y)\in U_{x_0}\times U_{y_0}$ and with equality at $(x_0,y_0)$.

Thus we can apply the maximum principle to conclude that for each $\lambda>0$, there exist $X\in Sym^2(T^*_{x_0}M)$ and $Y\in Sym^2(T^*_{y_0}M)$ such that
\begin{align*}
(D_y\tilde{d}(x_0,y_0),Y)&\in \bar{\mathcal{J}}^{2,+}(\psi\circ u)(y_0),\\
(D_x\tilde{d}(x_0,y_0),-X)&\in \bar{\mathcal{J}}^{2,+}(-\psi\circ u)(x_0),\\
i.e.\ (-D_x\tilde{d}(x_0,y_0),X)&\in \bar{\mathcal{J}}^{2,-}(\psi\circ u)(x_0),
\end{align*}
and
\begin{equation*}
\begin{pmatrix}-X&0\\0&Y\end{pmatrix}\leq M+\lambda M^2,
\end{equation*}
where $M=D^2\tilde{d}(x_0,y_0)$.

Note that $D_y\tilde{d}(x_0,y_0)=e_n(l)$ and $D_x\tilde{d}(x_0,y_0)=-e_n(0)$. By Lemma \ref{lem:incjet}, we have
\begin{align*}
(\varphi'(z_{y_0})e_n(l),\varphi'(z_{y_0})Y+\varphi''(z_{y_0})e_n(l)\otimes e_n(l))&\in \bar{\mathcal{J}}^{2,+}u(y_0),\\
(\varphi'(z_{x_0})e_n(0),\varphi'(z_{x_0})X+\varphi''(z_{x_0})e_n(0)\otimes e_n(0))&\in \bar{\mathcal{J}}^{2,-}u(x_0),
\end{align*}
where $z_{x_0}=\psi(u(x_0))$ and $z_{y_0}=\psi(u(y_0))$.

On the other hand, since $u$ is both a subsolution and a supersolution of \eqref{eq-G}, we have
\begin{equation*}
tr(\varphi'(z_{y_0})A_2 Y+\varphi''(z_{y_0})A_2e_n(l)\otimes e_n(l))+q(\varphi'(z_{y_0}),\varphi(z_{y_0}))\geq 0
\end{equation*}
and
\begin{equation*}
tr(\varphi'(z_{x_0})A_1 X+\varphi''(z_{x_0})A_1e_n(0)\otimes e_n(0))+q(\varphi'(z_{x_0}),\varphi(z_{x_0}))\leq 0,
\end{equation*}
where
\begin{equation*}
A_1=\begin{pmatrix}\beta(\varphi(z_{x_0}),\varphi'(z_{x_0}))&\dots&0&0\\ \vdots&\ddots&\vdots&\vdots\\0&\dots&\beta(\varphi(z_{x_0}),\varphi'(z_{x_0}))&0\\0&\dots&0&\alpha(\varphi(z_{x_0}),\varphi'(z_{x_0}))\end{pmatrix},
\end{equation*}
and
\begin{equation*}
A_2=\begin{pmatrix}\beta(\varphi(z_{y_0}),\varphi'(z_{y_0}))&\dots&0&0\\ \vdots&\ddots&\vdots&\vdots\\0&\dots&\beta(\varphi(z_{y_0}),\varphi'(z_{y_0}))&0\\0&\dots&0&\alpha(\varphi(z_{y_0}),\varphi'(z_{y_0}))\end{pmatrix}.
\end{equation*}
Therefore, first we have
\begin{equation*}
0\leq q(\varphi(z_{y_0}),\varphi'(z_{y_0}))+\varphi''(z_{y_0})\alpha(\varphi(z_{y_0}),\varphi'(z_{y_0}))
+\varphi'(z_{y_0})tr\left(\begin{pmatrix}0&C\\C&A_2\end{pmatrix}\begin{pmatrix}-X&0\\0&Y\end{pmatrix}\right),
\end{equation*}
where $C$ is an $n\times n$ matrix to be determined. Multiplying by $\frac{f^2(l)}{\varphi'(z_{y_0})\beta(\varphi(z_{y_0}),\varphi'(z_{y_0}))}$ gives
\begin{align*}
0&\leq \frac{f^2(l)}{\varphi'(z_{y_0})\beta(\varphi(z_{y_0}),\varphi'(z_{y_0}))}(q(\varphi(z_{y_0}),\varphi'(z_{y_0}))+\varphi''(z_{y_0})\alpha(\varphi(z_{y_0}),\varphi'(z_{y_0})))\\
&+\frac{f^2(l)}{\beta(\varphi(z_{y_0}),\varphi'(z_{y_0}))}tr\left(\begin{pmatrix}0&C\\C&A_2\end{pmatrix}\begin{pmatrix}-X&0\\0&Y\end{pmatrix}\right).
\end{align*}
Similarly, for the inequality at $x_0$ we get
\begin{align*}
0&\geq \frac{f^2(0)}{\varphi'(z_{x_0})\beta(\varphi(z_{x_0}),\varphi'(z_{x_0}))}(q(\varphi(z_{x_0}),\varphi'(z_{x_0}))+\varphi''(z_{x_0})\alpha(\varphi(z_{x_0}),\varphi'(z_{x_0})))\\
&-\frac{f^2(0)}{\beta(\varphi(z_{x_0}),\varphi'(z_{x_0}))}tr\left(\begin{pmatrix}A_1&0\\0&0\end{pmatrix}\begin{pmatrix}-X&0\\0&Y\end{pmatrix}\right).
\end{align*}
Combining them we obtain
\begin{align*}
0\leq& \frac{f^2(l)}{\varphi'\beta(\varphi,\varphi')}(q(\varphi,\varphi')+\varphi''\alpha(\varphi'))\bigg|_{z_{y_0}}
-\frac{f^2(0)}{\varphi'\beta(\varphi,\varphi')}(q(\varphi,\varphi')+\varphi''\alpha(\varphi,\varphi'))\bigg|_{z_{x_0}}\\
&+\frac{f^2(l)}{\beta(\varphi(z_{y_0}),\varphi'(z_{y_0}))}tr\left(\begin{pmatrix}0&C\\C&A_2\end{pmatrix}\begin{pmatrix}-X&0\\0&Y\end{pmatrix}\right)\\
&+\frac{f^2(0)}{\beta(\varphi(z_{x_0}),\varphi'(z_{x_0}))}tr\left(\begin{pmatrix}A_1&0\\0&0\end{pmatrix}\begin{pmatrix}-X&0\\0&Y\end{pmatrix}\right).
\end{align*}
Letting
\begin{equation*}
C=\begin{pmatrix}\frac{f(0)}{f(l)}\beta(\varphi(z_{y_0}),\varphi'(z_{y_0}))&&&\\&\ddots&&\\&&\frac{f(0)}{f(l)}\beta(\varphi(z_{y_0}),\varphi'(z_{y_0}))&\\&&&0\end{pmatrix},
\end{equation*}
then the matrix
\begin{align*}
W=&\quad\frac{f^2(l)}{\beta(\varphi(z_{y_0}),\varphi'(z_{y_0}))}\begin{pmatrix}0&C\\C&A_2\end{pmatrix}+\frac{f^2(0)}{\beta(\varphi(z_{x_0}),\varphi'(z_{x_0}))}\begin{pmatrix}A_1&0\\0&0\end{pmatrix}\\
=&\begin{pmatrix}f^2(0)I_{n-1}&0&f(0)f(l)I_{n-1}&0\\0&f^2(0)\frac{\alpha(\varphi,\varphi')}{\beta(\varphi,\varphi')}\big|_{z_{x_0}}&0&0\\
f(0)f(l)I_{n-1}&0&f^2(l)I_{n-1}&0\\0&0&0&f^2(l)\frac{\alpha(\varphi,\varphi')}{\beta(\varphi,\varphi')}\big|_{z_{y_0}}\end{pmatrix}
\end{align*}
is positive semidefinite.

So we can use
\begin{equation*}
\begin{pmatrix}-X&0\\0&Y\end{pmatrix}\leq M+\lambda M^2
\end{equation*}
to get
\begin{align*}
0&\leq \frac{f^2(l)}{\varphi'\beta(\varphi,\varphi')}(q(\varphi,\varphi')+\varphi''\alpha(\varphi,\varphi'))\bigg|_{z_{y_0}}
-\frac{f^2(0)}{\varphi'\beta(\varphi,\varphi')}(q(\varphi,\varphi')+\varphi''\alpha(\varphi,\varphi'))\bigg|_{z_{x_0}}\\
&+tr(WM)+\lambda\ tr(WM^2).
\end{align*}
Now we compute $tr(WM)$ as follows.
\begin{align*}
tr(WM)&=\sum_{i=1}^{n-1}D^2\tilde{d}((f(0)e_i(0),f(l)e_i(l)),(f(0)e_i(0),f(l)e_i(l)))\\
      &+\frac{\alpha(\varphi,\varphi')}{\beta(\varphi,\varphi')}\bigg|_{z_{x_0}}D^2\tilde{d}((f(0)e_n(0),0),(f(0)e_n(0),0))\\
      &+\frac{\alpha(\varphi,\varphi')}{\beta(\varphi,\varphi')}\bigg|_{z_{y_0}}D^2\tilde{d}((0,f(l)e_n(l)),(0,f(l)e_n(l))).
\end{align*}
Note that
\begin{align*}
\quad &D^2\tilde{d}((f(0)e_i(0),f(l)e_i(l)),(f(0)e_i(0),f(l)e_i(l)))\\
=&\frac{d^2}{dt^2}\bigg|_{t=0}\tilde{d}(\exp_{x_0}(tf(0)e_i(0)),\exp_{y_0}(tf(l)e_i(l)))\\
=&\frac{d^2}{dt^2}\bigg|_{t=0}L(\exp_{\gamma_0(s)}(tf(s)e_i(s))_{s\in [0,l]})\\
=&\int_0^l[(f'(s))^2-f^2(s)R(e_n,e_i,e_i,e_n)]ds\\
=&ff'\big|_0^l-\int_0^lf(f''+fR(e_n,e_i,e_i,e_n))ds,
\end{align*}
which implies
\begin{align*}
\sum_{i=1}^{n-1} &D^2\tilde{d}((f(0)e_i(0),f(l)e_i(l)),(f(0)e_i(0),f(l)e_i(l)))\\
=&(n-1)ff'\big|_0^l-\int_0^lf((n-1)f''+fRic(e_n,e_n))ds\\
\leq& (n-1)ff'\big|_0^l-\int_0^l(n-1)f(f''+\kappa f)ds.
\end{align*}
Similarly we get
\begin{align*}
&D^2\tilde{d}((f(0)e_n(0),0),(f(0)e_n(0),0))=0,\\
&D^2\tilde{d}((0,f(l)e_n(l)),(0,f(l)e_n(l)))=0.
\end{align*}
In summary, we have
\begin{align}
0&\leq \frac{f^2(l)}{\varphi'\beta(\varphi,\varphi')}(q(\varphi,\varphi')+\varphi''\alpha(\varphi,\varphi'))\bigg|_{z_{y_0}}
-\frac{f^2(0)}{\varphi'\beta(\varphi,\varphi')}(q(\varphi,\varphi')+\varphi''\alpha(\varphi,\varphi'))\bigg|_{z_{x_0}}\nonumber\\
&+(n-1)ff'\big|_0^l-\int_0^l(n-1)f(f''+\kappa f)ds+\lambda\ tr(WM^2).\label{comparison}
\end{align}
Taking $f(s)\equiv 1$ and $\kappa=0$, and letting $\lambda\rightarrow 0$, we have
\begin{equation*}
0\leq \frac{q(\varphi,\varphi')+\varphi''\alpha(\varphi,\varphi')}{\varphi'\beta(\varphi,\varphi')}\bigg|^{z_{y_0}}_{z_{x_0}}.
\end{equation*}
Now taking Condition \eqref{cond3.2} into account, since $z_{y_0}=z_{x_0}+d(x_0,y_0)+\e_0>z_{x_0}$, we get a contradiction. Then we must have
\begin{equation*}
Z(x,y)=\psi(u(y))-\psi(u(x))-d(x,y)\leq 0,
\end{equation*}
which is the desired result.

\end{proof}

\section{Manifolds with negative lower Ricci curvature bound}\label{sec4}

In this section we prove the following modulus of continuity estimate.
\begin{thm}\label{thm4.1}
Let $(M^n,g)$ be a compact Riemannian manifold with Ricci curvature $Ric\geq (n-1)\kappa$, $\kappa< 0$, and $u$ a viscosity solution of Equation \eqref{eq-G}. Suppose the barrier $\varphi:[a,b]\rightarrow [\inf u,\sup u]$ satisfies
\begin{align}
&\varphi'>0,\label{cond4.1}\\
\frac{q(\varphi,\varphi')+\varphi''\alpha(\varphi,\varphi')}{\varphi'\beta(\varphi,\varphi')}\bigg|_z&+(n-1)\frac{\rho'}{\rho}=0,\label{cond4.2}
\end{align}
where $\rho:[a,b]\rightarrow \R^+$ satisfies $\rho''+\kappa \rho=0$ and $(\frac{\rho'}{\rho})'>0$. Moreover let $\psi$ be the inverse of $\varphi$, i.e. $\psi(\varphi(z))=z$. Then we have
\begin{equation*}
\psi(u(y))-\psi(u(x))-d(x,y)\leq 0,\: \forall x,y\in M.
\end{equation*}
\end{thm}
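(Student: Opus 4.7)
The plan is to follow the contradiction argument of Theorem~\ref{thm3.1} essentially verbatim, with a different, non-constant choice of the weight function $f$ used to build the smooth barrier $\tilde{d}(x,y)$. Suppose for contradiction that $\psi(u(y))-\psi(u(x))-d(x,y)$ attains a positive maximum $\varepsilon_0>0$ at some $(x_0,y_0)$ with $x_0\neq y_0$. Let $\gamma_0$ be a unit-speed minimizing geodesic from $x_0$ to $y_0$ of length $l=d(x_0,y_0)$, and set $z_{x_0}=\psi(u(x_0))$, $z_{y_0}=\psi(u(y_0))$. The contact condition forces $z_{y_0}=z_{x_0}+l+\varepsilon_0$, and in particular $z_{x_0}+l\in[a,b]$. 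Construct $\tilde{d}$ near $(x_0,y_0)$ as in Section~\ref{sec3} with a positive weight $f:[0,l]\to \R^+$ to be chosen, apply the maximum principle for semicontinuous functions together with Lemma~\ref{lem:incjet}, and use the viscosity sub/supersolution property of $u$ at $y_0$ and $x_0$ respectively. This yields exactly the comparison inequality \eqref{comparison}.

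The decisive step is to take
\[
f(s)=\rho(z_{x_0}+s),\qquad s\in[0,l].
\]
Since $\rho$ is $C^2$ and positive on $[a,b]$, so is $f$; and because $\rho''+\kappa\rho=0$, one has $f''+\kappa f\equiv 0$, making the integral term in \eqref{comparison} vanish. Substituting condition \eqref{cond4.2} into the first two terms of \eqref{comparison} turns them into
\[
-(n-1)f^2(l)\frac{\rho'(z_{y_0})}{\rho(z_{y_0})}+(n-1)f^2(0)\frac{\rho'(z_{x_0})}{\rho(z_{x_0})},
\]
and the identities $f(0)=\rho(z_{x_0})$, $f'(0)=\rho'(z_{x_0})$, $f(l)=\rho(z_{x_0}+l)$, $f'(l)=\rho'(z_{x_0}+l)$ reduce $(n-1)ff'\big|_0^l$ to a compatible form. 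The $s=0$ contributions cancel, and sending $\lambda\to 0$ yields
\[
0\leq (n-1)\rho(z_{x_0}+l)^2\left[\frac{\rho'(z_{x_0}+l)}{\rho(z_{x_0}+l)}-\frac{\rho'(z_{y_0})}{\rho(z_{y_0})}\right].
\]

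Since $(\rho'/\rho)'>0$ by hypothesis and $z_{y_0}=z_{x_0}+l+\varepsilon_0>z_{x_0}+l$, the bracket is strictly negative, which gives the sought-after contradiction. The main obstacle is really in \emph{identifying} the correct weight: $f$ must simultaneously kill the second-variation integral (which, given Ricci $\geq (n-1)\kappa$ with $\kappa<0$, forces $f$ to satisfy $f''+\kappa f=0$, hence $f\propto\rho$) and interact with the structural condition \eqref{cond4.2} in such a way that the surviving boundary terms collapse to a single expression in $\rho'/\rho$ that can be controlled by its strict monotonicity. Once the ansatz $f(s)=\rho(z_{x_0}+s)$ is in place, everything else is a direct adaptation of the $\kappa=0$ computation; the role of the warped-product ``model'' solution is precisely to supply this geometrically natural weight.
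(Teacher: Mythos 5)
Your proposal is correct and follows essentially the same route as the paper: a contradiction argument plugging a shifted copy of $\rho$ into the comparison inequality \eqref{comparison} so that the integral term vanishes, and then invoking the strict monotonicity of $\rho'/\rho$. The only (immaterial) difference is the shift: the paper takes $f(s)=\rho(z_{x_0}+\e_0+s)$, so the $s=l$ boundary terms cancel and monotonicity is used at the $x_0$-endpoint, whereas your choice $f(s)=\rho(z_{x_0}+s)$ cancels the $s=0$ terms and applies monotonicity at the $y_0$-endpoint — the same computation either way.
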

By letting $y$ approach $x$, we get the following gradient bound.
\begin{corr}\label{corr4.1}
Under the conditions of Theorem \ref{thm4.1}, if moreover $u\in C^1(M)$, then for every $x\in M$ we have
\begin{equation*}
|\n u(x)|\leq \varphi'(\psi(u(x))).
\end{equation*}
\end{corr}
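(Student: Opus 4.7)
The plan is to deduce the pointwise gradient bound directly from the two-point estimate in Theorem~\ref{thm4.1} by taking a one-sided directional derivative along a short minimizing geodesic. No new machinery beyond Theorem~\ref{thm4.1} and the hypothesis $u\in C^1(M)$ should be needed.

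First I would fix $x\in M$ and dispose of the trivial case $\nabla u(x)=0$, in which the conclusion is immediate because $\varphi'>0$ by \eqref{cond4.1}. Assuming instead $\nabla u(x)\neq 0$, set $v=\nabla u(x)/|\nabla u(x)|\in T_xM$ and consider the geodesic $\gamma(t)=\exp_x(tv)$ for $t\in[0,\delta)$, choosing $\delta>0$ small enough that $\gamma$ remains minimizing, so that $d(x,\gamma(t))=t$. Applying Theorem~\ref{thm4.1} to the pair $(x,\gamma(t))$ then gives
\[
\psi(u(\gamma(t)))-\psi(u(x))\leq d(x,\gamma(t))=t,\qquad t\in[0,\delta).
\]

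Next, since $\varphi\in C^2([a,b])$ with $\varphi'>0$, the inverse $\psi$ is $C^2$ on $[\inf u,\sup u]$ with $\psi'(s)=1/\varphi'(\psi(s))$; combined with $u\in C^1(M)$ this makes $t\mapsto \psi(u(\gamma(t)))$ differentiable at $0$. Dividing the inequality above by $t>0$ and letting $t\to 0^+$, I would obtain
\[
\psi'(u(x))\,\langle\nabla u(x),v\rangle=\frac{|\nabla u(x)|}{\varphi'(\psi(u(x)))}\leq 1,
\]
which rearranges to the desired bound $|\nabla u(x)|\leq \varphi'(\psi(u(x)))$.

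There is no real obstacle here: the argument is a single one-sided limit. The only points to verify carefully are that $\psi$ is differentiable at the height $u(x)$ (immediate from $\varphi'>0$ on the closed interval $[a,b]$ together with $u(x)\in[\inf u,\sup u]$) and that the geodesic from $x$ in a prescribed unit direction is minimizing on a sufficiently short time interval, both of which are standard.
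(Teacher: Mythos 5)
Your argument is correct and is precisely the paper's intended proof: Corollary \ref{corr4.1} is stated as following from Theorem \ref{thm4.1} ``by letting $y$ approach $x$'', which is exactly your one-sided limit along the geodesic $\gamma(t)=\exp_x(tv)$ with $v=\nabla u(x)/|\nabla u(x)|$. The side points you flag (differentiability of $\psi$ from $\varphi'>0$, short-time minimality of the geodesic, and the trivial case $\nabla u(x)=0$) are handled correctly, so nothing is missing.
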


\begin{rem}
In fact, Theorem \ref{thm4.1} is an equivalent statement of Theorem \ref{thm2}. First, in a warped product $\bar{M}=N\times [a,b]$ with $\bar{g}=ds^2+\rho(s)^2g^N$, the requirement $Ric(\partial_s,\partial_s)=(n-1)\kappa$ is equivalent to $\rho''+\kappa \rho=0$. Second, by the calculation in \cite[Sec. 3]{And14}, $\bar{u}(x,s)=\varphi(s)$ being a solution of \eqref{eq-G} means
\begin{align*}
\alpha(\varphi,\varphi')\varphi''+q(\varphi,\varphi')&+(n-1)\varphi'\beta(\varphi,\varphi')\frac{\rho'}{\rho}=0.
\end{align*}
\end{rem}
\begin{rem}\label{rem1}
Note that $(\frac{\rho'}{\rho})'>0$ with $\rho''+\kappa \rho=0$ implies $\kappa<-(\rho')^2/\rho^2$ and so necessarily $\kappa<0$. In addition, it is easy to see that $\rho(z)=\cosh (z_0+z)$ satisfies our conditions; while $\rho(z)=\cos z$ does not, which means for $\kappa>0$ (or for other choices of $\rho$ with $\kappa<0$) we need a different argument.
\end{rem}

\begin{proof}[Proof of Theorem \ref{thm4.1}]
The proof is by contradiction. Define
\begin{equation*}
Z(x,y)=\psi(u(y))-\psi(u(x))-d(x,y).
\end{equation*}
Assume otherwise
\begin{equation*}
\max_{x,y\in M} Z(x,y)=Z(x_0,y_0)=\e_0>0.
\end{equation*}
As derived in the last section, we have
\begin{align*}
0&\leq \frac{f^2(l)}{\varphi'\beta(\varphi,\varphi')}(q(\varphi,\varphi')+\varphi''\alpha(\varphi,\varphi'))\bigg|_{z_{y_0}}
-\frac{f^2(0)}{\varphi'\beta(\varphi,\varphi')}(q(\varphi,\varphi')+\varphi''\alpha(\varphi,\varphi'))\bigg|_{z_{x_0}}\\
&+(n-1)ff'\big|_0^l-\int_0^l(n-1)f(f''+\kappa f)ds.
\end{align*}

Take $f(s)=\rho(z_{x_0}+\e_0+s)$. And \eqref{cond4.2} shows that
\begin{equation*}
\frac{q(\varphi,\varphi')+\varphi''\alpha(\varphi,\varphi')}{\varphi'\beta(\varphi,\varphi')}\bigg|_z=-(n-1)\frac{\rho'(z)}{\rho(z)}
\end{equation*}
is strictly decreasing. So we have
\begin{align*}
0&< \frac{f^2(l)}{\varphi'\beta(\varphi,\varphi')}(q(\varphi,\varphi')+\varphi''\alpha(\varphi,\varphi'))\bigg|_{z_{y_0}}
-\frac{f^2(0)}{\varphi'\beta(\varphi,\varphi')}(q(\varphi,\varphi')+\varphi''\alpha(\varphi,\varphi'))\bigg|_{z_{x_0}+\e_0}\\
 &+(n-1)ff'\bigg|_0^l-\int_0^l (n-1)f(f''+\kappa f)ds\\
 &=\rho^2\left(\frac{q(\varphi,\varphi')+\varphi''\alpha(\varphi,\varphi')}{\varphi'\beta(\varphi,\varphi')}+(n-1)\frac{\rho'}{\rho}\right)\bigg|_{z_{x_0}+\e_0}^{z_{y_0}}\\
 &=0,
\end{align*}
which is a contradiction.
Then we must have
\begin{equation*}
Z(x,y)=\psi(u(y))-\psi(u(x))-d(x,y)\leq 0,
\end{equation*}
which is the desired result.
\end{proof}

\section{Riemannian manifolds with general lower Ricci bound}\label{sec5}

\subsection{The result and an example}\label{sec:method2}

As observed in Remark \ref{rem1}, the case $\kappa>0$ may not be handled as in the last two sections. In other words, for this case we may not prove that any solution $\varphi$ to the one-dimensional equation is a barrier. However, we find that for some family of the one-dimensional solutions, the property of being barriers can be extended smoothly in the family, and in fact this phenomenon holds for any $\kappa\in \R$ regardless of its sign. More precisely, we prove the following result.
\begin{thm}\label{thm5.1}
Let $(M^n,g)$ be a compact Riemannian manifold with Ricci curvature $Ric\geq (n-1)\kappa$, $\kappa\in \R$. Assume in Equation \eqref{eq-G} the coefficients $\alpha$, $\beta$ are $C^2$ functions, and $q$ a continuous function. Let $u$ be a $C^3$ solution of Equation \eqref{eq-G}. Suppose there exists a family of functions $\varphi_c$ from $[a_c,b_c]$ to $[\inf u,\sup u]$ which satisfies
\begin{align*}
\frac{\alpha(\varphi,\varphi')\varphi''+q(\varphi,\varphi')}{\varphi'\beta(\varphi,\varphi')}&+(n-1)\frac{\rho'}{\rho}=0,\\
\varphi'&>0,
\end{align*}
and depends smoothly on $c\in (c_u,+\infty)$. Here $\rho$ is a positive $C^2$ function on $[a_c,b_c]$ satisfying $\rho''+\kappa \rho=0$. Moreover, assume for $c\gg c_u$, $\varphi_c'$ is uniformly large. Then for any $c>c_u$ we have
\begin{equation}\label{eq5.1}
\psi_c(u(y))-\psi_c(u(x))-d(x,y)\leq 0,\: \forall x,y\in M,
\end{equation}
where $\psi_c$ is the inverse of $\varphi_c$.
\end{thm}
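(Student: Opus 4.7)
The plan is a continuity method in the parameter $c$. Define $Z_c(x,y):=\psi_c(u(y))-\psi_c(u(x))-d(x,y)$ and $S:=\{c\in(c_u,\infty):Z_c\leq 0\text{ on }M\times M\}$. The aim is to show $S=(c_u,\infty)$ by proving $S$ is nonempty, relatively closed, and relatively open in the connected set $(c_u,\infty)$. Non-emptiness and closedness are the easy steps: $(c,x,y)\mapsto Z_c(x,y)$ is jointly continuous by the smooth dependence of $\varphi_c$ on $c$, giving closedness; and for $c$ sufficiently large the hypothesis that $\varphi_c'$ is uniformly large, combined with boundedness of $|\nabla u|$ (since $u\in C^3(M)$ and $M$ is compact), forces $|\nabla(\psi_c\circ u)|=|\nabla u|/\varphi_c'(\psi_c\circ u)\leq 1$, and integrating $\psi_c\circ u$ along a minimizing geodesic joining $x$ to $y$ gives $Z_c\leq 0$. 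Hence $S$ contains a half-line $(C,\infty)$.

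Showing $S$ is relatively open is the main task. Assume for contradiction that $c^*:=\inf S>c_u$. By closedness $c^*\in S$, and by the definition of the infimum there exist sequences $c_k\uparrow c^*$ and $(x_k,y_k)\in M\times M$ with $Z_{c_k}(x_k,y_k)>0$; since $Z_c$ vanishes on the diagonal, $x_k\neq y_k$. By compactness a limit $(x_0,y_0)$ exists with $Z_{c^*}(x_0,y_0)=0$, i.e.\ $Z_{c^*}$ attains its maximum at $(x_0,y_0)$.

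If $x_0\neq y_0$ (and, after passing to a subsequence, off the cut locus so that minimizing geodesics persist in a neighbourhood), apply the two-point maximum principle computation of Theorem~\ref{thm4.1} at $(x_0,y_0)$ with the choice $f(s)=\rho(z_{x_0}+s)$. Under the hypotheses of Theorem~\ref{thm5.1} the identity $(n-1)\rho'/\rho=-(q+\varphi''\alpha)/(\varphi'\beta)$ together with $\rho''+\kappa\rho=0$ (hence $f''+\kappa f\equiv 0$) makes every non-trivial term on the right of the comparison~\eqref{comparison} cancel, leaving the trivial statement $0\leq 0$; the strict contradiction of Section~\ref{sec4} is lost precisely because the monotonicity $(\rho'/\rho)'>0$ is no longer assumed. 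This is the main obstacle. To recover strictness I would exploit smoothness of the family in $c$: the critical configuration produces the one-sided derivative inequality $\partial_c Z_c(x_0,y_0)|_{c=c^*}\leq 0$, which upon expansion via the ODE defining $\varphi_c$ yields a strict algebraic identity incompatible with the degenerate comparison, closing the argument.

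If instead $x_0=y_0$, the critical data reduce to saturation of the pointwise gradient bound $|\nabla u(x_0)|=\varphi_{c^*}'(\psi_{c^*}(u(x_0)))$ with the non-strict bound holding on all of $M$. Exploiting $u\in C^3$, I would apply the Bochner formula at $x_0$ to the $P$-function $P:=|\nabla u|^2-(\varphi_{c^*}'\circ\psi_{c^*}\circ u)^2$, which attains its maximum zero at $x_0$; the match between $Ric\geq(n-1)\kappa g$ and $\rho''+\kappa\rho=0$ makes the pointwise comparison reduce to a classical $P$-function maximum principle computation and supplies the required contradiction. Combining the two cases gives $c^*=c_u$, hence $S=(c_u,\infty)$, which is the assertion of Theorem~\ref{thm5.1}.
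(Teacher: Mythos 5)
Your continuity-method skeleton (take the infimum $c^*$ of the set of good parameters, use the uniform largeness of $\varphi_c'$ to show the set contains a half-line, pass to a limiting critical configuration at $c^*$) is exactly the structure of the paper's proof, but both of your contradiction steps have genuine gaps. In the off-diagonal case you correctly identify the main obstacle: the computation of Theorem~\ref{thm4.1} with $f(s)=\rho(z_{x_0}+s)$ degenerates to $0\leq 0$ because $(\rho'/\rho)'>0$ is not assumed. Your proposed repair, however, is not a proof. The hypotheses give no monotonicity or sign information on $\partial_c\varphi_c$ or $\partial_c\psi_c$, so the one-sided inequality $\partial_c^+Z_c(x_0,y_0)\big|_{c=c^*}\leq 0$ (which is all that the definition of $c^*$ yields) carries no contradiction by itself, and ``expansion via the ODE defining $\varphi_c$'' names no mechanism that produces one. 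The paper obtains the missing strictness in a different way: before running the continuity argument it perturbs the one-dimensional equation, solving $\frac{\alpha(\varphi,\varphi')\varphi''+q(\varphi,\varphi')}{\varphi'\beta(\varphi,\varphi')}=-(n-1)\frac{\rho'}{\rho}-\delta(c)\frac{z}{\rho^2}$ with $\delta(c)\to 0$ as $c\to c_0$, so that at an interior maximum with $x_0\neq y_0$ the comparison of Lemma~\ref{lem-computation} and \eqref{eq5.4} leaves the strictly positive term $\delta(c_1)\,l>0$, which is the contradiction.

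The diagonal case is also not closed by your argument. Running a Bochner/$P$-function computation on $|\n u|^2-(\varphi_{c^*}'\circ\psi_{c^*}\circ u)^2$ requires differentiating Equation~\eqref{eq-G}; under the stated hypotheses $q$ is merely continuous and $\alpha,\beta,q$ carry no variational structure, so no elliptic inequality for that quantity is available (avoiding precisely this differentiation is the point of the two-point method), and even in the variational setting such a computation would at best reproduce the non-strict bound or a rigidity statement rather than contradict the saturation $|\n u(x_0)|=\varphi_{c^*}'(\psi_{c^*}(u(x_0)))$. The paper handles this case by rescaling: it defines $\hat Z=Z/d$ off the diagonal and extends it continuously to the unit sphere bundle by $\frac{1}{\varphi'(z_x)}D_vu(x)-1$, so the diagonal becomes a boundary component on which $\hat Z$ attains the value $0$ while $Z<0$ holds in the interior; the differential inequality $\mathcal{L}Z+DZ*DZ+P\cdot DZ\geq C(x,y)Z$ near that boundary (which again uses the $\delta$-perturbation to fix the sign) together with Hill's boundary-point Hopf lemma forces $D_{(0,v_0)}Z(x_0,x_0)<0$, contradicting $\frac{1}{\varphi'(z_{x_0})}D_{v_0}u(x_0)-1=0$. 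Without the $\delta$-perturbation and this boundary Hopf step (or some substitute supplying strictness in both cases), your proposal does not yield the theorem.
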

By letting $y$ approach $x$, we get the following gradient bound.
\begin{corr}\label{corr5.1}
Under the conditions of Theorem \ref{thm5.1}, for every $x\in M$ we have
\begin{equation*}
|\n u(x)|\leq \varphi_c'(\psi_c(u(x))),\: c>c_u.
\end{equation*}
\end{corr}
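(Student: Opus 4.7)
The plan is to derive Corollary \ref{corr5.1} directly from Theorem \ref{thm5.1} by specializing to $y$ close to $x$ along a geodesic in a prescribed direction, and then taking the limit.

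Fix $c>c_u$, a point $x\in M$, and a unit tangent vector $v\in T_xM$. For $t>0$ smaller than the injectivity radius at $x$, set $y_t:=\exp_x(tv)$, so that $d(x,y_t)=t$. Since $\varphi_c:[a_c,b_c]\to[\inf u,\sup u]$ is a $C^2$ diffeomorphism with $\varphi_c'>0$, its inverse $\psi_c$ is $C^2$ on $[\inf u,\sup u]$, and in particular defined on the whole range of $u$. Theorem \ref{thm5.1} applied to the pair $(x,y_t)$ gives
\begin{equation*}
\psi_c(u(y_t))-\psi_c(u(x))\leq d(x,y_t)=t.
\end{equation*}

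Next I would exploit the regularity $u\in C^3(M)$ (in particular $C^1$). The composition $t\mapsto \psi_c(u(y_t))$ is then differentiable at $t=0$ with derivative $\psi_c'(u(x))\,\langle\nabla u(x),v\rangle$. Dividing the above inequality by $t$ and letting $t\to 0^+$ yields
\begin{equation*}
\psi_c'(u(x))\,\langle\nabla u(x),v\rangle\leq 1.
\end{equation*}
Since $\psi_c'(u(x))=1/\varphi_c'(\psi_c(u(x)))>0$ (because $\varphi_c'>0$ on $[a_c,b_c]$), this rearranges to
\begin{equation*}
\langle\nabla u(x),v\rangle\leq \varphi_c'(\psi_c(u(x))).
\end{equation*}

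Finally, taking the supremum over unit vectors $v\in T_xM$ produces the desired bound $|\nabla u(x)|\leq \varphi_c'(\psi_c(u(x)))$. There is no real obstacle here: once Theorem \ref{thm5.1} is in hand, the only points to check are that $u(x)$ lies in the domain $[\inf u,\sup u]$ of $\psi_c$ (true by definition), that $\psi_c$ is $C^1$ there (true since $\varphi_c'>0$), and that $u$ is differentiable at $x$ (true since $u\in C^3$). The argument is in fact the infinitesimal version of the two-point inequality \eqref{eq5.1}, reflecting the general principle that such two-point estimates are sharpenings of pointwise gradient bounds.
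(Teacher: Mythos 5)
Your proposal is correct and is exactly the argument the paper intends: the paper derives Corollary \ref{corr5.1} from Theorem \ref{thm5.1} simply "by letting $y$ approach $x$", which is precisely your computation with $y_t=\exp_x(tv)$, division by $t$, and the identity $\psi_c'=1/\varphi_c'>0$. The regularity checks you note ($u\in C^3$, $\varphi_c'>0$ so $\psi_c$ is $C^1$ on the range of $u$) are the only points needed and are all available under the hypotheses of Theorem \ref{thm5.1}.
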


Here we give an example in which Theorem \ref{thm5.1} applies.
\begin{exam}
Let us consider the following equation:
\begin{equation*}
div_g(\Phi'(|\nabla_g u|^2) \n_g u)+q(u)=0,
\end{equation*}
where $q(u)=Q'(u)$ for some function $Q$, on a Riemannian manifold with $Ric\geq n-1$. Here $\Phi$ satisfies some structure conditions in Subsection \ref{sec6.5}. We also use the following notations:
\begin{align*}
K(s)&:=\Phi'(s)s-\f{1}{2}\Phi(s),\\
\Lambda(s)&:=2\Phi''(s)s+\Phi'(s).
\end{align*}
Note that for this example, $\alpha(u,|\nabla u|)=\Lambda(|\nabla u|^2)$ and $\beta(u,|\nabla u|)=\Phi'(|\nabla u|^2)$.

Let $c_u=sup_{r\in [\inf u,\sup u]}Q(r)$ and suppose $u_0\in [\inf u,\sup u]$ is such that $Q(u_0)=c_u$. Then the ODE appearing in Theorem \ref{thm5.1} becomes the following: (Here $c>c_u$ and $\rho (z)=\cos z$.)
\begin{equation}\label{eq-b1}
\begin{cases}
\Lambda((\varphi_c')^2)\varphi_c''+q(\varphi_c)=(n-1)\tan z\cdot \Phi'((\varphi_c')^2)\varphi_c',\\
\varphi(0)=u_0,\:\varphi'(0)= \sqrt{K^{-1}\circ (c-c_u)},\\
\end{cases}
\end{equation}
which has a unique solution $\varphi_c:[a,b]\rightarrow [\inf u,\sup u]$, where $0\in [a,b]\subset (-\pi/2,\pi/2)$. Here $K^{-1}$ denotes the inverse function of $K$. Equivalently, $\varphi_c$ satisfies:
\begin{equation*}
K((\varphi')^2)+Q(\varphi)=c+\int_0^z (n-1)\tan z\cdot \Phi'((\varphi')^2)(\varphi')^2dz.
\end{equation*}
Since $\int_0^z (n-1)\tan z\cdot \Phi'((\varphi')^2)(\varphi')^2dz\geq 0$, we always have
\begin{equation*}
\varphi_c'(z)\geq \sqrt{K^{-1}\circ(c-Q(\varphi_c))}\geq \sqrt{K^{-1}\circ(c-c_u)}>0.
\end{equation*}
Moreover, when $c\gg c_u$, $\varphi_c'$ is uniformly large. So by Theorem \ref{thm5.1}, for the corresponding inverse $\psi_c$ ($c>c_u$), we have
\begin{equation*}
\psi_{c}(u(y))-\psi_{c}(u(x))-d(x,y)\leq 0,\: \forall x,y\in M.
\end{equation*}
\end{exam}

\subsection{The proof} In this subsection, we first give the outline of the proof and then derive a computational lemma which is needed in the proof.


\begin{proof}[Proof of Theorem \ref{thm5.1}]

Assume by contradiction that \eqref{eq5.1} does not hold for some $c_0>c_u$. Then for any $c>c_0$ we can solve by perturbation
\begin{equation}\label{eq-b2}
\begin{cases}
\dfrac{\alpha(\varphi,\varphi')\varphi''+q(\varphi,\varphi')}{\varphi'\beta(\varphi,\varphi')}=-(n-1)\dfrac{\rho'}{\rho}-\delta(c)\dfrac{z}{\rho^2},\\
\varphi(a_c)=\varphi_c(a_c),\:\varphi'(a_c)=\varphi_c'(a_c),\\
\end{cases}
\end{equation}
to get a function $\varphi_{c,\delta(c)}$, where $\delta(c)>0$ is small, and depends on $c$ with $\lim_{c\rightarrow c_0} \delta(c)=0$.

Denote $D=\{(x,x):x\in M\}$. Then we consider a manifold $\hat{M}$ with boundary which compactifies $(M\times M)\setminus D$ as follows: As a set, $\hat{M}$ is the disjoint union of $(M\times M)\setminus D$ with the unit sphere bundle $SM=\{(x,v)\in TM:||v||=1\}$. The manifold-with-boundary structure is defined by the atlas generated by all charts for $(M\times M)\setminus D$, together with the charts $\hat{Y}$ from $SM\times (0,r)$ defined by taking a chart $Y$ for $SM$, and setting $\hat{Y}(z,s):=(\exp(sY(z)),\exp(-sY(z)))$.

In the following we write $\varphi=\varphi_{c,\delta(c)}$ and $\psi=\psi_{c,\delta(c)}$ for short. Let
\begin{equation*}
Z(x,y)=\psi(u(y))-\psi(u(x))-d(x,y).
\end{equation*}
We define a function $\hat{Z}$ on $\hat{M}$ as follows: For $(x,y)\in (M\times M)\setminus D$, we define
\begin{equation*}
\hat{Z}(x,y)=\f{Z(x,y)}{d(x,y)}.
\end{equation*}
For $(x,v)\in SM$, we define
\begin{equation*}
\hat{Z}(x,v)=\f{1}{\varphi'(z_x)}D_vu(x)-1.
\end{equation*}

Then one can check that $\hat{Z}$ is a continuous function on $\hat{M}$. And when $c\gg c_0$, we have $\hat{Z}\leq 0$ on $\hat{M}$.

Thus we can define
\begin{equation*}
c_1:=\inf \{\bar{c}>c_0 \big| \hat{Z}\leq 0 \text{ on }\hat{M} \text{ for }c\in (\bar{c},+\infty)\}.
\end{equation*}
By assumption we have $c_1>c_0$, which we shall prove leads to a contradiction.

In fact for $c=c_1$ there will be two cases.

\textbf{Case 1}: $0=\hat{Z}(x_0,y_0)$ for some $x_0\neq y_0$, i.e. $Z(x_0,y_0)=0$.

\textbf{Case 2}: $Z(x,y)<0$ for any $x\neq y\in M$ and $\hat{Z}(x_0,v_0)=0$ for some $(x_0,v_0)\in SM$.

We will rule out these two cases, so $c_1>c_0$ is impossible.

For that purpose, we need the following lemma:
\begin{lem}\label{lem-computation}
Let $u$ be a $C^3$ solution of Equation \eqref{eq-G}.  Let $x\neq y$ with $d(x,y)<\textrm{inj}(M)$, the injectivity radius of $M$.  Let $\gamma_0:[0,l]\rightarrow M$ be the length-minimizing geodesic from $x$ to $y$, and choose Fermi coordinates along $\gamma_0$ as before.   Fix a $C^2$ function $f:[0,l]\rightarrow \R$. Then $Z=Z(x,y)$ satisfies the following equation
\begin{align*}
\mathcal{L}Z:&=f(l)^2\f{\alpha(\varphi,\varphi')}{\beta(\varphi,\varphi')}\bigg|_{z_y}
D^2_{(0,e_n),(0,e_n)}Z
+f(0)^2\f{\alpha(\varphi,\varphi')}{\beta(\varphi,\varphi')}\bigg|_{z_x}
D^2_{(e_n,0),(e_n,0)}Z\\
&\quad\null+\sum_{i<n}
D^2_{(f(0)e_i,f(l)e_i),(f(0)e_i,f(l)e_i)}Z\\
&=-f(l)^2\f{\alpha(\varphi,\varphi')\varphi''+q(\varphi,\varphi')}{\varphi'\beta(\varphi,\varphi')}\bigg|_{z_y}+f(0)^2\f{\alpha(\varphi,\varphi')\varphi''+q(\varphi,\varphi')}{\varphi'\beta(\varphi,\varphi')}\bigg|_{z_x}\\
&-(n-1)f(s)f'(s)\bigg|_0^l+\int_0^l\left((n-1)ff''+f^2Ric(e_n)\right)ds+DZ*DZ+P\cdot DZ,
\end{align*}
where the coefficients of $DZ*DZ$ and $DZ$ are $C^1$ functions.
\end{lem}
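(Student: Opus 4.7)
The plan is to carry out a direct classical computation that parallels the semijet argument in Theorem \ref{thm3.1}, now using the $C^3$ regularity of $u$ and the smoothness of the distance function $d$ on a neighborhood of $(x,y)$ (available because $d(x,y)<\mathrm{inj}(M)$, so $(x,y)$ is away from the cut locus). Since the computation is done at an arbitrary $(x,y)$ rather than at a critical point of $Z$, first-order contributions in $DZ$ will appear and must be carried along as remainders rather than discarded.

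First I would split $\mathcal{L}Z=\mathcal{L}[\psi\circ u(y)]-\mathcal{L}[\psi\circ u(x)]-\mathcal{L}[d(x,y)]$ using linearity, observing that many cross-terms vanish since $\psi\circ u(y)$ and $\psi\circ u(x)$ each depend on only one factor. For the $\psi\circ u$ pieces, the chain rule
\[
D^2_{vv}(\psi\circ u)=\psi'(u)D^2_{vv}u+\psi''(u)(D_vu)^2
\]
reduces $\mathcal{L}[\psi\circ u(y)]$ and $\mathcal{L}[\psi\circ u(x)]$ to weighted anisotropic second-order combinations of $D^2u$ at $y$ and $x$, plus explicit quadratic-in-$Du$ remainders. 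I would then invoke Equation \eqref{eq-G} at $y$ and at $x$ to eliminate these second derivatives: written in the $Du$-adapted frame, the equation reads $\alpha D^2_{\hat w\hat w}u+\beta\sum_{i<n}D^2_{\hat e_i\hat e_i}u+q=0$. Rewriting this identity in the Fermi frame $\{e_i\}$ along $\gamma_0$ introduces an angular discrepancy between $Du/|Du|$ and $e_n$, but the first variation identity $D_{(0,v)}Z=\psi'(u(y))D_vu(y)-\langle v,e_n(l)\rangle$ (and its analogue at $x$) shows that this discrepancy is controlled linearly by $DZ$; the $C^2$ regularity of $\alpha,\beta$ and the $C^3$ regularity of $u$ ensure the coefficients of these remainders are $C^1$. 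Combining with the weights $f(l)^2/(\varphi'\beta)|_{z_y}$ and $f(0)^2/(\varphi'\beta)|_{z_x}$ coming from the definition of $\mathcal{L}$, the second-order contributions in $u$ collapse to the barrier quantity $(\alpha\varphi''+q)/(\varphi'\beta)$, while the $\psi''(u)(Du)^2$ chain-rule remainders are absorbed into $DZ\ast DZ$.

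For the distance contribution I would use the second variation of arclength. The radial directions $(e_n,0)$ and $(0,e_n)$ yield $D^2d=0$, a standard fact for the distance function at an interior point of a minimizing geodesic, so the first two sums in $\mathcal{L}$ receive no contribution from $d$. For the transverse directions the second variation formula, applied to the variation of $\gamma_0$ with variational vector $f(s)e_i(s)$, yields
\begin{align*}
\sum_{i<n}D^2_{(f(0)e_i,f(l)e_i),(f(0)e_i,f(l)e_i)}d
&=\int_0^l\left[(n-1)(f')^2-f^2\,\mathrm{Ric}(e_n,e_n)\right]ds\\
&=(n-1)ff'\big|_0^l-\int_0^l\left[(n-1)ff''+f^2\,\mathrm{Ric}(e_n,e_n)\right]ds
\end{align*}
after integration by parts. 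Entering this contribution through $-d$ with the appropriate sign and collecting everything produces the asserted identity.

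The main obstacle, and really the only delicate point, is the bookkeeping in the middle step: because \eqref{eq-G} is naturally expressed in the $Du$-adapted frame while $\mathcal{L}$ is built out of the Fermi frame along $\gamma_0$, one must convert between the two and verify that every discrepancy is either linear in $DZ$ with a $C^1$ coefficient or quadratic in $DZ$. This is precisely where the regularity hypotheses $u\in C^3$ and $\alpha,\beta\in C^2$ enter, and once this conversion is done carefully the lemma follows by assembling the three previous computations.
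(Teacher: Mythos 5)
Your proposal is correct and is essentially the paper's own argument: the paper likewise obtains the first and second derivatives of $Z$ from variations of $\gamma_0$ (the chain rule for $\psi\circ u$ together with the first and second variation of arclength, with variational fields $e_n$ at the endpoints and $f(s)e_i(s)$ transversally), integrates by parts, and then substitutes Equation \eqref{eq-G} at $x$ and $y$ written in the Fermi frame modulo terms linear and quadratic in $DZ$ with $C^1$ coefficients, exactly the plan you outline. One bookkeeping caveat if you carry it out: the radial chain-rule remainder $\psi''(u)u_n^2=-\frac{\varphi''}{\varphi'}\left(1+D_{(0,e_n)}Z\right)^2$ is not purely quadratic in $DZ$ --- its zeroth-order part, weighted by $f^2\alpha/\beta$, is precisely what supplies the $\alpha\varphi''$ portion of the barrier quantity (the PDE elimination only contributes $q$), so it cannot be absorbed wholesale into $DZ\ast DZ$ as written.
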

With this lemma at hand, choose
\begin{equation*}
f(s)=\rho (z_x+Z+s).
\end{equation*}
Using the condition on Ricci curvature, we can derive
\begin{align}\label{eq5.4}
&\mathcal{L}Z+DZ*DZ+P\cdot DZ\nonumber\\
&\geq -f(l)^2\f{\alpha(\varphi,\varphi')\varphi''+q(\varphi,\varphi')}{\varphi'\beta(\varphi,\varphi')}\bigg|_{z_y}+f(0)^2\f{\alpha(\varphi,\varphi')\varphi''+q(\varphi,\varphi')}{\varphi'\beta(\varphi,\varphi')}\bigg|_{z_x}-(n-1)f(s)f'(s)\bigg|_0^l\nonumber\\
&=-\rho^2\left(\f{\alpha(\varphi,\varphi')\varphi''+q(\varphi,\varphi')}{\varphi'\beta(\varphi,\varphi')}+(n-1)\frac{\rho'}{\rho}\right)\bigg|_{z_x+Z}^{z_y}\nonumber\\
&\quad+f(0)^2\f{\alpha(\varphi,\varphi')\varphi''+q(\varphi,\varphi')}{\varphi'\beta(\varphi,\varphi')}\bigg|^{z_x}_{z_x+Z}.
\end{align}

Let us first consider Case 1. The same computation applies and so by the maximum principle we have at $(x_0,y_0)$
\begin{align*}
0\geq &\mathcal{L}Z+DZ*DZ+P\cdot DZ\\
&=-\rho^2 \left(\f{\alpha(\varphi,\varphi')\varphi''+q(\varphi,\varphi')}{\varphi'\beta(\varphi,\varphi')}+(n-1)\frac{\rho'}{\rho}\right)\bigg|_{z_x}^{z_y}\\
&=\delta(c_1)z\big|_{z_x}^{z_y}=\delta(c_1)l>0.
\end{align*}
This contradiction shows that Case 1 cannot occur.

Next we consider Case 2: $Z(x,y)<0$ for any $x\neq y\in M$. In this case by \eqref{eq5.4} and the choice of $\varphi_{c,\delta(c)}$, we have, for $x\neq y$ close enough to each other,
\begin{align*}
&\mathcal{L}Z+DZ*DZ+P\cdot DZ\\
&\geq \delta(c_1)z\big|_{z_x+Z}^{z_y}+f(0)^2\left(-(n-1)\frac{\rho'}{\rho}-\delta(c)\frac{z}{\rho^2}\right)\bigg|^{z_x}_{z_x+Z}\\
&=\delta(c_1)l+ C(x,y)Z\geq C(x,y)Z,
\end{align*}
where $C(x,y)$ is some bounded function. That is, the inequality above holds near the boundary of $\hat{M}$. Now recall the boundary Hopf maximum principle from \cite{Hil70}, applying to any $C^2$ function $Z$ which has a strict maximum boundary value zero and satisfies $a^{ij}Z_{ij}+b^iZ_i+cZ\geq 0$ with $a^{ij}\in C^2$, $b^i\in C^1$, $c\in L^\infty$ and $[a^{ij}]\geq 0$. Thus we derive for $(x_0,v_0)$
\begin{align*}
0>D_{(0,v)}Z(x,x)&=\lim_{t\rightarrow 0}\f{Z(x,\exp_x(tv))-Z(x,x)}{t}\\
               &=\lim_{t\rightarrow 0}\f{\psi(u(\exp_x(tv)))-\psi(u(x))-d(x,\exp_x(tv))}{t}\\
               &=\f{1}{\varphi'(z_x)}D_vu(x)-1,
\end{align*}
which contradicts $\hat{Z}(x_0,v_0)=\f{1}{\varphi'(z_{x_0})}D_{v_0}u(x_0)-1=0$. So Case 2 is also ruled out. So $c_1>c_0$ is impossible, and the proof of Theorem~\ref{thm5.1} is complete.
\end{proof}
\hspace{10mm}

Finally we give the proof of Lemma \ref{lem-computation}.
\begin{proof}[The proof of Lemma \ref{lem-computation}]

Recall
\begin{equation*}
Z(x,y)=\psi(u(y))-\psi(u(x))-d(x,y).
\end{equation*}

For any $X\in T_xM$ and $Y\in T_y M$, there exists a variation $\gamma(\e,s)$ of $\gamma_0(s)$ such that $\gamma_\e(0)=X$ and $\gamma_\e(l)=Y$. Then the first derivative of $Z$ in the direction $(X,Y)$ is
\begin{align*}
D_{(X,Y)}Z&=\psi'(u(y))\langle \n u(y),\gamma_\e(l)\rangle-\psi'(u(x))\langle \n u(x),\gamma_\e(0)\rangle-\langle T(s),\gamma_\e(s)\rangle|_0^l\\
          &=\langle \psi'(u)\n u-\gamma_s,\gamma_\e(s)\rangle|_0^l.
\end{align*}

Furthermore the second derivative of $Z$ is
\begin{align*}
D^2_{(X,Y),(X,Y)}Z
&= \psi''(u)\langle \n u,\gamma_\e(l)\rangle^2+\psi'(u)\left(\langle D_{\gamma_\e}(\n u),\gamma_\e(l)\rangle+\langle \n u,D_{\gamma_\e}\gamma_\e(l)\rangle\right)\\
&-\psi''(u)\langle \n u,\gamma_\e(0)\rangle^2-\psi'(u)\left(\langle D_{\gamma_\e}(\n u),\gamma_\e(0)\rangle+\langle \n u,D_{\gamma_\e}\gamma_\e(0)\rangle\right)\\
&-\int_0^l\left( |\n_{\gamma_s} (\gamma_\e^\perp)|^2-R(\gamma_s,\gamma_\e,\gamma_\e,\gamma_s)\right)ds-\langle \gamma_s,\n_{\gamma_\e} \gamma_\e \rangle\big|_0^l.
\end{align*}

Note that
\begin{align*}
&\quad \psi'\cdot \varphi'=1,\\
\psi''\cdot &(\varphi')^2+\psi'\cdot \varphi''=0.
\end{align*}
We get
\begin{align*}
D^2_{(X,Y),(X,Y)}Z&= -\f{\varphi''}{(\varphi')^3}\langle \n u,\gamma_\e(l)\rangle^2+\f{\varphi''}{(\varphi')^3}\langle \n u,\gamma_\e(0)\rangle^2\\
&+\f{1}{\varphi'}D^2u(\gamma_\e(l),\gamma_\e(l))-\f{1}{\varphi'}D^2u(\gamma_\e(0),\gamma_\e(0))\\
&-\int_0^l\left( |\n_{\gamma_s} (\gamma_\e^\perp)|^2-R(\gamma_s,\gamma_\e,\gamma_\e,\gamma_s)\right)ds+D_{(D_{\gamma_\e}\gamma_\e(0),D_{\gamma_\e}\gamma_\e(l))}Z.
\end{align*}
Now we choose particular variations to obtain inequalities for particular parts of the Hessian of $Z$:

\noindent{\bf(1).} Vary $y$: Choose the variation
\begin{equation*}
\gamma(\e,s)=\gamma_0(s+\e\f{s}{l}).
\end{equation*}
So $\gamma_\e(l)=e_n$, $\gamma_\e(0)=0$. Then we get
\begin{align*}
D_{(0,e_n)}Z&=\langle \f{1}{\varphi'}\n u-\gamma_s,e_n\rangle (l)=\f{u_n(y)}{\varphi'}-1,\\
D^2_{(0,e_n),(0,e_n)}Z&=-\f{\varphi''u_n^2(y)}{(\varphi')^3}+\f{1}{\varphi'}u_{nn}(y)\notag\\
&=-\f{\varphi''}{\varphi'}(1+D_{(0,e_n)}Z)^2+\f{1}{\varphi'}u_{nn}(y).
\end{align*}

\noindent{\bf(2).} Vary $x$: Choose the variation
\begin{equation*}
\gamma(\e,s)=\gamma_0(s+\e\f{l-s}{l}).
\end{equation*}
So $\gamma_\e(l)=0$, $\gamma_\e(0)=e_n$. Then similarly we get
\begin{align*}
D_{(e_n,0)}Z&=-\langle \f{1}{\varphi'}\n u-\gamma_s,e_n\rangle (0)=-\f{u_n(x)}{\varphi'}+1,\\
D^2_{(e_n,0),(e_n,0)}Z&=\f{\varphi''u_n^2(x)}{(\varphi')^3}-\f{1}{\varphi'}u_{nn}(x)=\f{\varphi''}{\varphi'}(1-D_{(e_n,0)}Z)^2-\f{1}{\varphi'}u_{nn}(x).
\end{align*}

\noindent{\bf(3).} Vary $\gamma_0$ along $e_i(s)$ for fixed $i<n$: Choose
\begin{equation*}
\gamma(\e,s)=\exp_{\gamma_0(s)}(\e f(s) e_i(s)).
\end{equation*}
So $\gamma_\e(s)=f(s)e_i(s)$. Therefore
\begin{align*}
D_{(f(0)e_i,f(l)e_i)}Z&=\langle \f{1}{\varphi'}\n u-\gamma_s,f(s)e_i(s)\rangle|_0^l\\
                      &=\f{f(l)}{\varphi'}u_i(y)-\f{f(0)}{\varphi'}u_i(x),\\
D^2_{(f(0)e_i,f(l)e_i),(f(0)e_i,f(l)e_i)}Z&=-\f{\varphi''}{(\varphi')^3}f^2(l)u_i^2(y)+\f{\varphi''}{(\varphi')^3}f^2(0)u_i^2(x)\\
&+\f{1}{\varphi'}f^2(l)u_{ii}(y)-\f{1}{\varphi'}f^2(0)u_{ii}(x)\\
&-\int_0^l[(f'(s))^2-f^2(s)R(e_n,e_i,e_i,e_n)]ds.
\end{align*}
Then after summation from $i=1$ to $i=n-1$ we have
\begin{align*}
\sum_{i<n}D^2_{(f(0)e_i,f(l)e_i),(f(0)e_i,f(l)e_i)}Z&=-\f{\varphi''}{(\varphi')^3}f^2(l)\sum_{i<n}u_i^2(y)+\f{\varphi''}{(\varphi')^3}f^2(0)\sum_{i<n}u_i^2(x)\\
&+\f{1}{\varphi'}f^2(l)\sum_{i<n}u_{ii}(y)-\f{1}{\varphi'}f^2(0)\sum_{i<n}u_{ii}(x)\\
&-(n-1)f(s)f'(s)|_0^l+\int_0^l[(n-1)ff''+f^2Ric(e_n)]ds.
\end{align*}
Now recall that at $x_0$ or $y_0$ Equation \eqref{eq-G} is
\begin{equation*}
\alpha(\varphi,\varphi') u_{nn}+\beta(\varphi,\varphi') \sum_{i<n} u_{ii}+q(\varphi)+DZ*DZ+P\cdot DZ=0.
\end{equation*}

Therefore direct computation yields
\begin{align*}
\mathcal{L}Z:
=&-f(l)^2\f{\alpha(\varphi,\varphi')\varphi''}{\varphi'\beta(\varphi,\varphi')}\bigg|_{z_y}(1+D_{(0,e_n)}Z)^2+f(0)^2\f{\alpha(\varphi,\varphi')\varphi''}{\varphi'\beta(\varphi,\varphi')}\bigg|_{z_x}(1-D_{(e_n,0)}Z)^2\\
&-\f{\varphi''}{\varphi'}\bigg|_{z_y}\sum_{i<n}(D_{(0,f(l)e_i)}Z)^2+\f{\varphi''}{\varphi'}\bigg|_{z_x}\sum_{i<n}(D_{(f(0)e_i,0)}Z)^2\\
&-f(l)^2 \f{q(\varphi)}{\varphi'\beta(\varphi,\varphi')}\bigg|_{z_y}+f(0)^2 \f{q(\varphi)}{\varphi'\beta(\varphi,\varphi')}\bigg|_{z_x}-(n-1)f(s)f'(s)\bigg|_0^l\\
&+\int_0^l\left((n-1)ff''+f^2Ric(e_n)\right)ds+DZ*DZ+P\cdot DZ.
\end{align*}
Putting the terms involving $DZ$ together, we complete the proof of Lemma~\ref{lem-computation}.

\end{proof}

\section{Finsler spaces with nonnegative weighted Ricci curvature}\label{sec6}

In this section we consider the problem on Finsler manifolds, although only for Equation \eqref{eq6.0} below of divergence form. We first briefly review the fundamentals of Finsler geometry \cite{BCS00, She01} and some developments from \cite{Oht09,OS14}. Then we give the structure conditions for the equation and regularity of its solutions. Finally we will discuss the modulus of continuity estimates in this Finsler context.

\subsection{Finsler manifolds}

Let $M$ be an $n$-dimensional connected smooth manifold without boundary. Given a local coordinate $\{x^i\}_{i=1}^n$ on an open set $U\subset M$, let $\{x^i, V^j\}_{i,j=1}^n$ be the coordinate of $TU$, i.e.
\begin{equation*}
V=V^j\f{\pt}{\pt x^j}, \forall V\in T_x M, x\in U.
\end{equation*}
\begin{defn}[Finsler structures]
A function $F:TM\rightarrow [0,\infty)$ is called a Finsler structure if the following three conditions hold:
\begin{enumerate}
  \item (Regularity) $F$ is $C^\infty$ on $TM\setminus 0$;
  \item (Positive $1$-homogeneity) $F(x,cV)=cF(x,V)$ for all $(x,V)\in TM$ and all $c> 0$;
  \item (Strong convexity) The matrix
  \begin{equation*}
  g_{ij}(x,V):=\f{\pt^2}{\pt V^i \pt V^j}\left(\f{1}{2}F^2\right)(x,V)
  \end{equation*}
  is positive definite for all $(x,V)\in TM\setminus 0$.
\end{enumerate}
\end{defn}
We call such a pair $(M,F)$ a smooth Finsler manifold. If moreover a measure $m$ is given on $M$, we call the triple $(M,F,m)$ a Finsler measure space. Note that for every non-vanishing vector field $V$, $g_{ij}(x,V)$ induces a Riemannian structure $g_V$ on $M$ by the following formula
\begin{equation*}
g_V(X,Y)=g_{ij}(x,V)X^iY^j, \forall X,Y\in T_x M.
\end{equation*}
By the homogeneity of $F$, $g_V(V,V)=F^2(x,V)$.

For $x,y\in M$, the distance function from $x$ to $y$ is defined by
\begin{equation*}
d(x,y):=\inf_\gamma \int_0^1 F(\gamma(t),\dot{\gamma}(t))dt,
\end{equation*}
where the infimum is taken over all $C^1$-curves $\gamma:[0,1]\rightarrow M$ such that $\gamma(0)=x$ and $\gamma(1)=y$. Note that generally $d(x,y)\neq d(y,x)$ since $F$ is only positively homogeneous.

A $C^\infty$-curve $\gamma:[0,1]\rightarrow M$ is called a geodesic if it is locally minimizing and has a constant speed (i.e. $F(\gamma(t),\dot{\gamma}(t))$ is constant). For $V\in T_xM$, if there exists a geodesic $\gamma:[0,1]\rightarrow M$ with $\dot{\gamma}(0)=V$, then we define the exponential map by $\exp_x(V):=\gamma(1)$. We say that $(M,F)$ is forward complete if the exponential map is defined on whole $TM$. Then by Hopf-Rinow theorem (see \cite{BCS00}), any pair of points can be connected by a minimal geodesic.

\subsection{Chern connection and Ricci curvature}

Let $\pi:TM\rightarrow M$ be the projection. There exists a unique linear connection on $\pi^*TM$, which is called Chern connection. The Chern connection is determined by the following structure equations:
\begin{enumerate}
  \item Torsion freeness:
  \begin{equation*}
  D_X^VY-D_Y^VX=[X,Y];
  \end{equation*}
  \item Almost $g$-compatibility:
  \begin{equation*}
  Z(g_V(X,Y))=g_V(D_Z^VX,Y)+g_V(X,D_Z^VY)+2C_V(D_Z^VV,X,Y),
  \end{equation*}
  for $V\in TM\setminus 0,X,Y,Z\in TM$.
\end{enumerate}
Here $D^V_XY$ is the covariant derivative with respect to the reference vector $V$ and
\begin{equation*}
C_V(X,Y,Z):=C_{ijk}(V)X^iY^jZ^k=\f{1}{4}\f{\pt^3 F^2(x,V)}{\pt V^i\pt V^j\pt V^k}X^iY^jZ^k
\end{equation*}
is the Cartan tensor of $(M,F)$. Note that $D^{cV}_XY=D^V_XY$, $c>0$ (see e.g. (2.5) in \cite{OS14}), and $C_V(V,X,Y)=0$ due to the homogeneity of $F$.

Given two linear independent vectors $V,W\in T_xM\setminus 0$, the flag curvature is defined by
\begin{equation*}
K^V(V,W)=\f{g_V(R^V(V,W)W,V)}{g_V(V,V)g_V(W,W)-g_V(V,W)^2},
\end{equation*}
where $R^V$ is the Riemannian curvature given by
\begin{equation*}
R^V(X,Y)Z:=D_X^VD_Y^VZ-D_Y^VD_X^VZ-D_{[X,Y]}^VZ.
\end{equation*}
Then the Ricci curvature is defined by
\begin{equation*}
Ric(V):=\sum_{i=1}^{n-1}K^V(V,e_i),
\end{equation*}
where $\{e_1,\cdots,e_{n-1},\f{V}{F(V)}\}$ is the orthonormal basis of $T_xM$ with respect to $g_V$. (Note that $Ric$ is $0$-homogeneous.)

Next we recall the definition of the weighted Ricci curvature on Finsler manifolds introduced by Ohta in \cite{Oht09}.
\begin{defn}
Given a unit vector $V\in T_xM$, let $\gamma:(-\e,\e)\rightarrow M$ be the geodesic with $\gamma(0)=x$ and $\dot{\gamma}(0)=V$. We decompose the measure $dm$ as $dm=e^{-\Psi}vol_{\dot{\gamma}}$ along $\gamma$, where $vol_{\dot{\gamma}}$ is the volume form of $g_{\dot{\gamma}}$. Define the weighted Ricci curvature as
\begin{enumerate}
  \item $Ric_n(V):=\begin{cases}Ric(V)+(\Psi\circ \gamma)''(0),\text{ if } (\Psi\circ \gamma)'(0)=0,\\-\infty,\text{ otherwise, }\end{cases}$
  \item $Ric_N(V):=Ric(V)+(\Psi\circ \gamma)''(0)-\dfrac{(\Psi\circ \gamma)'(0)^2}{N-n}\text{ for }N\in (n,\infty),$
  \item $Ric_\infty(V):=Ric(V)+(\Psi\circ \gamma)''(0).$
\end{enumerate}
For $c\geq 0$ and $N\in [n,\infty]$, define $Ric_N(cV):=c^2Ric_N(V)$. (Note that $Ric_N$ is $2$-homogeneous.)
\end{defn}
We say that $Ric_N\geq K$ for some $K\in \R$ if $Ric_N(V)\geq KF(V)^2$ for all $V\in TM$. It is proved by Ohta \cite{Oht09} that the bound $Ric_N(V)\geq KF(V)^2$ is equivalent to Lott-Villani and Sturm's curvature-dimension condition, which has many interesting applications (see \cite{Oht09,OS09}).

\subsection{Gradient and Laplacian}

Given a Finsler structure $F$ on a manifold $M$, its dual Finsler structure $F^*$ on the cotangent bundle $T^*M$ is defined by
\begin{equation*}
F^*(x,\xi):=\sup_{Y\in T_xM\setminus 0}\f{\xi(Y)}{F(x,Y)},\forall \xi\in T^*_xM.
\end{equation*}
The Legendre transformation $\cal{L}:TM\rightarrow T^*M$ is given by
\begin{equation*}
\cal{L}(Y):=\begin{cases}g_Y(Y,\cdot),&Y\neq 0,\\0,&Y=0.\end{cases}
\end{equation*}
It is easy to check that $\cal{L}$ is a diffeomorphism from $TM\setminus 0$ onto $T^*M\setminus 0$ and $F(Y)=F^*(\cal{L}(Y)),\forall Y\in TM$. Moreover, there holds the Cauchy-Schwarz inequality
\begin{equation*}
g_Y(Y,Z)\leq F(Y)F(Z),
\end{equation*}
for $\forall Y\neq 0,Z\in TM$.

Now for a smooth function $u:M\rightarrow \R$, we define the gradient vector $\n u(x)$ as $\n u(x):=\cal{L}^{-1}(du(x))\in T_xM$. In a local coordinate system, we can write it as
\begin{equation*}
\n u(x)=\begin{cases}g^{ij}(x,\n u)\dfrac{\pt u}{\pt x^i}\dfrac{\pt}{\pt x^j},& \text{ if }du(x)\neq 0,\\0, &\text{ otherwise},\end{cases}
\end{equation*}
where $g^{ij}(x,\n u)$ is the inverse of $g_{ij}(x,\n u)$. Also note that $g^{ij}(x,\n u)=g^{*ij}(x,du)$.

For a differentiable vector field $V$ on $M$ and $x\in M_V:=\{x|V(x)\neq 0\}$, we define $\n V\in T_x^*M\otimes T_xM$ by
\begin{equation*}
\n V(Y):=D^V_YV\in T_xM,\:Y\in T_xM.
\end{equation*}
Then the Hessian of $u$ is given by $\n^2 u:=\n(\n u)$ on $M_{\nabla u}$, which can also be seen as in $T^*_xM\otimes T^*_xM$ via
\begin{equation*}
\n^2 u(X,Y)=g_{\n u}(D_X^{\n u} \n u,Y).
\end{equation*}
One can check that $\n^2 u(X,Y)$ is symmetric. See details in \cite{OS14} or \cite{WX13}.

Now for a given positive $C^\infty$-measure $m$ on $M$, define the divergence of a differentiable vector field $V$ with respect to $m$ in the weak form by
\begin{equation*}
\int_M \phi\, div_m  Vdm=-\int_M d\phi(V)dm
\end{equation*}
for $\forall \phi \in C^\infty_c(M)$. In a local coordinate system where $dm=\sigma(x)dx$,
\begin{equation*}
div_mV=\f{1}{\sigma(x)}\f{\pt}{\pt x^i}\left(\sigma(x)V^i\right).
\end{equation*}
The Finsler Laplacian of $u\in W^{1,2}_{loc}(M)$ is given by $\Delta_m u=div_m (\n u)$. Recall the relationship between $\Delta_m u$ and $\n^2 u$ is that (see e.g. \cite[Lemma 3.3]{WX07})
\begin{equation*}
\Delta_m u=\sum_{i=1}^n \n^2 u(e_i,e_i)-S(\n u),\text{ on }M_{\nabla u}.
\end{equation*}
Here $\{e_i\}_{i=1}^n$ is the $g_{\n u}$-orthonormal basis and $S:TM\rightarrow \R$ is the $S$-curvature \cite{She01} given by
\begin{equation*}
S(V)=\f{d}{dt}\bigg|_{t=0}\Psi\circ \gamma(t),
\end{equation*}
where $\gamma$ is a geodesic with $\gamma'(0)=V$ and $dm=e^{-\Psi}vol_{\dot{\gamma}}$. Note that $S(cV)=cS(V)$, for $c>0$.

On the other hand, for a given smooth non-vanishing vector field $V$, we can define the weighted gradient vector and the weighted Laplacian on the weighted Riemannian manifold $(M,g_V,m)$ by
\begin{equation*}
\n^Vu(x)=g^{ij}(x,V)\f{\pt u}{\pt x^j}\f{\pt}{\pt x^i},
\end{equation*}
and
\begin{equation*}
\Delta_m^V u=div_m(\n^V u),
\end{equation*}
respectively. It is worth mentioning that $\n^{\n u}u=\n u$ and $\Delta_m^{\n u} u=\Delta_m u$ on $M_{\nabla u}$.


\subsection{The first and second variation formulas for arclength of geodesic}

Let $\gamma_0:[0,l]\rightarrow M$ be a unit speed geodesic in $M$. Suppose $\gamma(\e,s)$ is any variation of $\gamma_0(s)$ with $\e\in (-\e_0,\e_0)$. Then the first variation formula for arclength is (see e.g. \cite{Oht09})
\begin{equation*}
\f{\pt}{\pt \e}\bigg|_{\e=0}L(\gamma(\e,\cdot))=g_{\gamma_s}(\gamma_s,\gamma_\e)\big|_0^l,
\end{equation*}
where $\gamma_s$ is the unit tangent vector of $\gamma_0$ and $\gamma_\e=\f{\pt}{\pt \e}\gamma$ is the variational vector field.

Furthermore, the second variation formula is given by (see e.g. \cite{Oht09})
\begin{align*}
\f{\pt^2}{\pt \e^2}\bigg|_{\e=0}L(\gamma(\e,\cdot))&=\int_0^l \{g_{\gamma_s}(D^{\gamma_s}_{\gamma_s}({\gamma_\e}^\perp),D^{\gamma_s}_{\gamma_s}({\gamma_\e}^\perp))-g_{\gamma_s}(R^{\gamma_s}(\gamma_s,\gamma_\e)\gamma_\e,\gamma_s)\}ds\\
&+g_{\gamma_s}(D^{\gamma_s}_{\gamma_\e}{\gamma_\e},{\gamma_s})\big|_0^l,
\end{align*}
where $\gamma_\e^\perp$ means the normal part of the variational vector.

\subsection{Euler-Lagrange equation of energy functional, structure conditions and notations}\label{sec6.5}

Let $(M^n,F,m)$ be a compact Finsler measure space without boundary. We consider the following energy functional
\begin{equation*}
\cal{E}(u)=\int_M \left(\f{1}{2}\Phi(F^2(\n u))-Q(u)\right)dm,
\end{equation*}
where $\Phi \in C^3(\R^+)$ with $\Phi(0)=0$ and $Q\in C^2(\R)$. The Euler-Lagrange equation for $\cal{E}$ is given by
\begin{equation}\label{eq6.0}
div_m(\Phi'(F^2(\nabla u)) \n u)+q(u)=0,
\end{equation}
where $q(u)=Q'(u)$. We always assume $\Phi$ satisfies the following structure conditions: $\Phi(0)=0$, and there exist $p>1$, $\tau\geq 0$ and $c_1,c_2>0$ such that for any $V,W\in TM\setminus 0$,
\begin{equation}\label{cond6.1}
c_1(\tau+F(V))^{p-2}\leq \Phi'(F^2(V))\leq c_2(\tau+F(V))^{p-2}
\end{equation}
and
\begin{equation}\label{cond6.2}
c_1(\tau+F(V))^{p-2}F^2(W)\leq a(V)(W,W)\leq c_2(\tau+F(V))^{p-2}F^2(W),
\end{equation}
where $a(V)=2\Phi''(F^2(V))\cal{L}(V)\otimes \cal{L}(V)+\Phi'(F^2(V))g_V$.

\begin{rem}
If we choose
\begin{equation*}
\Phi(s)=\f{2}{p}s^{\f{p}{2}},
\end{equation*}
we obtain the $p$-Finsler-Laplace operator, which is the model for our structure conditions.
\end{rem}

We also use the following notations:
\begin{align*}
K(s)&:=\Phi'(s)s-\f{1}{2}\Phi(s),\\
\Lambda(s)&:=2\Phi''(s)s+\Phi'(s).
\end{align*}
Note that $K'(s)=\f{1}{2}\Lambda(s)$. And taking in \eqref{cond6.2} $W=V$, we have $\Lambda(s)\geq c_1(\tau+s^{\frac{1}{2}})^{p-2}>0$, for $s>0$.

\subsection{The regularity of the solutions to the equation}

By Theorem 1 in \cite{Tol84} (see also \cite{DiB83} and \cite{Lew83}), any solution $u$ of \eqref{eq6.0} satisfies $u\in C^{1,\alpha}(M)$ and
\begin{equation*}
||u||_{C^{1,\alpha}(M)}\leq C(|u|_\infty,M).
\end{equation*}

Furthermore, on any domain $\Omega$ such that $\inf_{\bar{\Omega}} F(\n u)>0$, Equation \eqref{eq6.0} is uniformly elliptic in $\Omega$. Then Theorem 6.3 in \cite[Chap.4]{LU68} implies $u\in C^{2,\alpha}(\Omega)$.

\subsection{Modulus of continuity estimate}\label{sec6.7}

We shall prove the following modulus of continuity estimate.
\begin{thm}\label{thm6.1}
Let $(M^n,F,m)$ be a compact Finsler measure space with nonnegative weighted Ricci curvature $Ric_\infty$ and $u\in W^{1,p}(M)$ a solution of Equation \eqref{eq6.0}. Suppose the barrier function $\varphi:[a,b]\rightarrow [\inf u,\sup u]$ satisfies
\begin{align}
&\quad\qquad\varphi'>0,\label{cond6.3}\\
\f{d}{ds}&\left(\f{\Lambda((\varphi')^2)\varphi''+q(\varphi)}{\varphi'\Phi'((\varphi')^2)}\right)<0.\label{cond6.4}
\end{align}
Moreover let $\psi$ be the inverse of $\varphi$, i.e. $\psi(\varphi(s))=s$. Then we have
\begin{equation*}
\psi(u(y))-\psi(u(x))-d(x,y)\leq 0,\: \forall x,y\in M.
\end{equation*}
\end{thm}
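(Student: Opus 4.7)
The plan is to mirror the argument for Theorem \ref{thm3.1}, replacing the viscosity-jet machinery by the classical second variation of arclength at a regular point, and using the weighted Finsler structure to turn the flag curvature into $Ric_\infty$. Arguing by contradiction, suppose that $Z(x,y):=\psi(u(y))-\psi(u(x))-d(x,y)$ attains a positive maximum $\e_0$ at $(x_0,y_0)$ with $x_0\neq y_0$, and let $\gamma_0:[0,l]\to M$ be a unit-speed minimizing geodesic from $x_0$ to $y_0$. Setting $e_n(s):=\gamma_s(s)$, the vanishing of the first variation of $Z$ at $(x_0,y_0)$, together with the first variation formula for arclength and the Legendre transform, forces
$$\nabla u(y_0)=\varphi'(z_{y_0})\,e_n(l),\qquad \nabla u(x_0)=\varphi'(z_{x_0})\,e_n(0).$$
In particular $F(\nabla u)$ is bounded away from zero in neighborhoods of $x_0$ and $y_0$, so Equation \eqref{eq6.0} is uniformly elliptic there and the regularity results recalled above yield $u\in C^{2,\alpha}$ in those neighborhoods, legitimising a pointwise second-variation computation of $D^2 Z$.

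Next I would carry out the computation of Lemma \ref{lem-computation} in the Finsler setting with $f\equiv 1$, using parallel orthonormal frames $\{e_i(s)\}_{i=1}^{n}$ along $\gamma_0$ with reference vector $\gamma_s$ and $e_n=\gamma_s$. The transverse variations $\gamma(\e,s)=\exp_{\gamma_0(s)}(\e e_i(s))$ for $i<n$ produce $-\int_0^l Ric(\gamma_s)\,ds$ after summation, via the Finsler second variation formula. At the endpoints, the quadratic form dictated by the maximum principle is weighted by $\Lambda((\varphi')^2)$ on the $e_n$-direction and $\Phi'((\varphi')^2)$ on the transverse directions; substituting the pointwise form of \eqref{eq6.0} valid on $M_{\nabla u}$,
$$\Lambda(F^2(\nabla u))\nabla^2 u(e_n,e_n)+\Phi'(F^2(\nabla u))\sum_{i<n}\nabla^2 u(e_i,e_i)=\Phi'(F^2(\nabla u))\,S(\nabla u)-q(u),$$
the endpoint $S$-curvature terms collapse by the fundamental theorem of calculus into $\int_0^l(\Psi\circ\gamma_0)''(s)\,ds$, which combines with $-\int_0^l Ric(\gamma_s)\,ds$ to yield $-\int_0^l Ric_\infty(\gamma_s)\,ds\leq 0$ under the curvature assumption. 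What remains is exactly
$$0\leq \left[\frac{\Lambda((\varphi')^2)\varphi''+q(\varphi)}{\varphi'\,\Phi'((\varphi')^2)}\right]_{z_{x_0}}^{z_{y_0}},$$
which contradicts the strict monotonicity hypothesis \eqref{cond6.4}, since $z_{y_0}-z_{x_0}=d(x_0,y_0)+\e_0>0$.

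The main obstacle is the alignment between the Finsler geometric objects and the algebraic structure of \eqref{eq6.0}: parallel transport along $\gamma_0$ must be taken with respect to the reference vector $\gamma_s$, and the osculating Riemannian metrics $g_{\gamma_s(0)}, g_{\gamma_s(l)}$ at the endpoints must coincide with $g_{\nabla u(x_0)}, g_{\nabla u(y_0)}$ in order for the coefficients $\Lambda,\Phi'$ to match the quadratic form produced by the second variation; this is precisely what the first-order conditions, together with the $0$-homogeneity of $g_V$ in $V$, guarantee. A secondary but essential point is the bookkeeping that gathers the endpoint $S$-curvature contributions, arising from the divergence-to-Hessian identity, into the integrated form $\int_0^l(\Psi\circ\gamma_0)''\,ds$; this is exactly the step that upgrades the Riemannian hypothesis $Ric\geq 0$ to the weighted bound $Ric_\infty\geq 0$ and completes the transplantation of the two-point argument to the Finsler setting.
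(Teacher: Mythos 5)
Your plan reproduces the paper's own argument for Theorem \ref{thm6.1} essentially step for step: contradiction at a positive two-point maximum, first variation of arclength plus the Legendre transform to identify $\nabla u(x_0),\nabla u(y_0)$ with $\varphi'$ times the geodesic tangent (hence local $C^{2,\alpha}$ regularity where $F(\nabla u)>0$), second variation along a $g_{\gamma_s}$-parallel frame, substitution of the pointwise form of \eqref{eq6.0} on $M_{\nabla u}$ with the endpoint $S$-curvature terms absorbed into $\int_0^l(\Psi\circ\gamma_0)''\,ds$ to produce $Ric_\infty$, and the final monotonicity contradiction with \eqref{cond6.4}. The details you flag (alignment of $g_{\gamma_s}$ with $g_{\nabla u}$ via $0$-homogeneity, and the $S$-curvature bookkeeping) are exactly the points the paper's proof relies on, so the proposal is correct and takes the same route.
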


\begin{rem}\label{rem-perturbation}
In the result above, by simple perturbation, we can replace Condition \eqref{cond6.4} by the equality
\begin{align*}
\Lambda((\varphi')^2)\varphi''+q(\varphi)=0.
\end{align*}
\end{rem}

By letting $y$ approach $x$, we get the following gradient bound.
\begin{corr}\label{corr6.1}
Under the conditions of Theorem \ref{thm6.1}, for every $x\in M$ we have
\begin{equation*}
F(\n u(x))\leq \varphi'(\psi(u(x))).
\end{equation*}
\end{corr}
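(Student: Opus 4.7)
The plan is to deduce the pointwise gradient estimate by linearising the two-point inequality of Theorem~\ref{thm6.1} at $y=x$ along each admissible direction, exactly as Corollary~\ref{corr3.1} was derived from Theorem~\ref{thm3.1} in the Riemannian case. Fix $x\in M$ and a forward unit tangent vector $v\in T_xM$ with $F(v)=1$, and set $y_t:=\exp_x(tv)$ for small $t>0$. Applying Theorem~\ref{thm6.1} to the pair $(x,y_t)$ gives
\[
\psi(u(y_t))-\psi(u(x))\leq d(x,y_t).
\]

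The next step is to expand both sides in $t$ as $t\to 0^+$. By the standard properties of the Finsler exponential map along the geodesic $s\mapsto \exp_x(sv)$, one has $d(x,y_t)=t+o(t)$. On the other hand, the regularity results recalled in Subsection~6.6 give $u\in C^{1,\alpha}(M)$, hence $\psi\circ u\in C^{1,\alpha}(M)$ (as $\psi$ is $C^2$ with $\psi'>0$), so
\[
\psi(u(y_t))-\psi(u(x))=\psi'(u(x))\,du(x)[v]\,t+o(t).
\]
Dividing the inequality by $t$ and letting $t\to 0^+$ yields $\psi'(u(x))\,du(x)[v]\leq 1$; using $\psi'(u(x))=1/\varphi'(\psi(u(x)))$ together with $\varphi'>0$ from \eqref{cond6.3}, this rewrites as $du(x)[v]\leq \varphi'(\psi(u(x)))$.

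To conclude, I would take the supremum over $v$ in the forward unit sphere $\{v\in T_xM:F(v)=1\}$. By the very definition of the dual Finsler norm,
\[
F^*(du(x))=\sup_{F(v)=1}du(x)[v]\leq \varphi'(\psi(u(x))).
\]
Since the Legendre correspondence satisfies $F(Y)=F^*(\mathcal{L}(Y))$ for every $Y\in TM$, applying it to $Y=\nabla u(x)=\mathcal{L}^{-1}(du(x))$ gives $F(\nabla u(x))=F^*(du(x))$, which is precisely the asserted bound.

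I do not expect any real obstacle: the only subtle point is the asymmetry of $F$, but the supremum defining $F^*(du(x))$ is taken over the same forward unit sphere that parametrises the forward exponential map, so the two sides of the two-point inequality are perfectly compatible and no separate analysis of the backward direction is required. Thus the corollary follows directly from Theorem~\ref{thm6.1} combined with the $C^{1,\alpha}$-regularity of solutions and the defining property of the Legendre transform.
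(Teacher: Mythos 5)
Your proposal is correct and follows exactly the route the paper intends: the paper proves the corollary simply ``by letting $y$ approach $x$'' in Theorem~\ref{thm6.1}, and your argument is precisely that limit carried out carefully, taking $y_t=\exp_x(tv)$ along forward unit directions, using $C^{1,\alpha}$ regularity to expand $\psi\circ u$, and identifying $\sup_{F(v)=1}du(x)[v]=F^*(du(x))=F(\nabla u(x))$ via the Legendre transform. The handling of the asymmetry of $F$ (forward unit sphere matching the forward distance $d(x,\cdot)$) is exactly the right point to check, and you check it correctly.
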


\begin{proof}[Proof of Theorem \ref{thm6.1}]
The proof is by contradiction. Define
\begin{equation*}
Z(x,y)=\psi(u(y))-\psi(u(x))-d(x,y).
\end{equation*}
Assume otherwise
\begin{equation*}
\max_{x,y\in M} Z(x,y)=Z(x_0,y_0)=\e_0>0.
\end{equation*}
Obviously $x_0\neq y_0$ and for any smooth unit speed curve $\gamma:[0,l]\rightarrow M$
\begin{equation*}
\cal{Z}(\gamma):=\psi(u(\gamma(l)))-\psi(u(\gamma(0)))-L(\gamma)\leq Z(\gamma(0),\gamma(l))\leq \e_0,
\end{equation*}
with equality when $\gamma=\gamma_0$, a length-minimising geodesic from $x_0$ to $y_0$.

Let $\gamma(\e,s)$ be any variation of $\gamma_0(s)$. The first derivative condition yields
\begin{equation*}
0=\psi'(u(y_0))g_{\n u}( \n u(y_0),\gamma_\e(l))-\psi'(u(x_0))g_{\n u}( \n u(x_0),\gamma_\e(0))-g_{\gamma_s}(\gamma_s,\gamma_\e)\big|_0^l.
\end{equation*}
Since the variation is arbitrary, we have
\begin{align*}
\psi'(u(y_0)) \n u(y_0)&=T(l),\\
\psi'(u(x_0)) \n u(x_0)&=T(0),
\end{align*}
or equivalently
\begin{align*}
\n u(y_0)&=\varphi'(z_y)T(l),\\
\n u(x_0)&=\varphi'(z_x)T(0),
\end{align*}
where $T$ is the unit tangent vector of $\gamma$, $z_y=\psi(u(y_0))$ and $z_x=\psi(u(x_0))$.

Next we will obtain some information from the following second derivative condition:
\begin{align*}
&0\geq\psi''(u)g_{\n u}( \n u,\gamma_\e(l))^2+\psi'(u)\left(g_{\n u}(D^{\n u}_{\gamma_\e}\n u,\gamma_\e(l))+g_{\n u}(\n u,D^{\n u}_{\gamma_\e}\gamma_\e(l))\right)\\
&-\psi''(u)g_{\n u}( \n u,\gamma_\e(0))^2-\psi'(u)\left(g_{\n u}(D^{\n u}_{\gamma_\e}\n u,\gamma_\e(0))+g_{\n u}(\n u,D^{\n u}_{\gamma_\e}\gamma_\e(0))\right)\\
&-\int_0^l \{g_{\gamma_s}(D^{\gamma_s}_{\gamma_s}({\gamma_\e}^\perp),D^{\gamma_s}_{\gamma_s}({\gamma_\e}^\perp))-g_{\gamma_s}(R^{\gamma_s}(\gamma_s,\gamma_\e)\gamma_\e,\gamma_s)\}ds-g_{\gamma_s}(D^{\gamma_s}_{\gamma_\e}{\gamma_\e},{\gamma_s})\big|_0^l,
\end{align*}
where we have suppressed some notations.

Choose at $x_0$ an orthonormal basis $\{e_1,\cdots,e_{n-1},e_n=T(0)\}$ with respect to $g_{\gamma_s(0)}$. Then parallel transport along $\gamma_0$ to produce an orthonormal basis $\{e_i(s)\}$ for each tangent space $T_{\gamma_0(s)}M$ (So $D^{\gamma_s}_{\gamma_s}e(s)=0$). Note that $e_n(s)=T(s)$ for each $s$. Then we consider the following three variations.

(1) Vary $y_0$. Choose the variation
\begin{equation*}
\gamma(\e,s)=\gamma_0(s+\e \f{s}{l}).
\end{equation*}
So $\gamma_\e(l)=e_n$ and $\gamma_\e(0)=0$. Then we get
\begin{align}
0&\geq\psi''(u)g_{\n u}( \n u,\gamma_\e(l))^2+\psi'(u)g_{\n u}(D^{\n u}_{\gamma_\e}\n u,\gamma_\e(l))\nonumber\\
 &= \psi''u_n(y_0)^2+\psi'u_{nn}(y_0)\nonumber\\
 &=\f{u_{nn}(y_0)-\varphi''(z_y)}{\varphi'(z_y)}.\label{eq6.1}
\end{align}

(2) Vary $x_0$. Choose the variation
\begin{equation*}
\gamma(\e,s)=\gamma_0(s+\e\f{l-s}{l}).
\end{equation*}
So $\gamma_\e(l)=0$ and $\gamma_\e(0)=e_n$. Then similarly we get
\begin{align}
0&\geq -\f{u_{nn}(x_0)-\varphi''(z_x)}{\varphi'(z_x)}.\label{eq6.2}
\end{align}

(3) Vary $\gamma_0$ along $e_i(s)$ for fixed $i<n$. Choose
\begin{equation*}
\gamma(\e,s)=\exp_{\gamma_0(s)}(\e e_i(s)).
\end{equation*}
So $\gamma_\e(s)=e_i(s)$. Therefore
\begin{align}
0&\geq \psi'(u(y_0))u_{ii}(y_0)-\psi'(u(x_0))u_{ii}(x_0)+\int_0^l g_{\gamma_s}(R^{\gamma_s}(\gamma_s,e_i)e_i,\gamma_s)ds\nonumber \\
 &=\f{u_{ii}(y_0)}{\varphi'(z_y)}-\f{u_{ii}(x_0)}{\varphi'(z_x)}+\int_0^l g_{\gamma_s}(R^{\gamma_s}(\gamma_s,e_i)e_i,\gamma_s)ds.\label{comparison1}
\end{align}
Then after summation from $i=1$ to $i=n-1$ and noting
\begin{equation*}
Ric_\infty(\gamma_s)=Ric(\gamma_s)+(\Psi\circ \gamma)''\geq 0,
\end{equation*}
we have
\begin{align}
0&\geq \f{\sum_{i<n}u_{ii}(y_0)}{\varphi'(z_y)}-\f{\sum_{i<n}u_{ii}(x_0)}{\varphi'(z_x)}-\int_0^l (\Psi\circ \gamma)''ds\nonumber\\
 &=\f{\sum_{i<n}u_{ii}(y_0)}{\varphi'(z_y)}-\f{\sum_{i<n}u_{ii}(x_0)}{\varphi'(z_x)}-(\Psi\circ \gamma)'\big|_0^l.\label{eq6.3}
\end{align}

Now recall Equation \eqref{eq6.0} is
\begin{align*}
0&=\Phi''(F^2(\n u))2u_{ij}u_iu_j+\Phi'(F^2(\n u))\Delta_m u+q(u)\\
 &=\Phi''(F^2(\n u))2u_{ij}u_iu_j+\Phi'(F^2(\n u))\left(\sum_{i=1}^n u_{ii}-S(\n u)\right)+q(u)
\end{align*}
on $M_{\nabla u}$. In particular, at $x_0$ or $y_0$
\begin{equation*}
\Phi''((\varphi')^2)2u_{nn}(\varphi')^2+\Phi'((\varphi')^2)\left(\sum_{i=1}^n u_{ii}-\varphi'(\Psi\circ \gamma)'\right)+q(\varphi)=0.
\end{equation*}
As a result we can solve
\begin{equation}\label{eq6.4}
\sum_{i<n}u_{ii}=-\f{\Lambda((\varphi')^2)u_{nn}+q(\varphi)}{\Phi'((\varphi')^2)}+\varphi'(\Psi\circ \gamma)'
\end{equation}
at $x_0$ or $y_0$.

Plugging $\eqref{eq6.4}$ into $\eqref{eq6.3}$ and using $\eqref{eq6.1}$ and $\eqref{eq6.2}$, finally we have
\begin{equation*}
0\geq -\f{\Lambda((\varphi')^2)\varphi''+q(\varphi)}{\varphi'\Phi'((\varphi')^2)}\bigg|_{z_y}+\f{\Lambda((\varphi')^2)\varphi''+q(\varphi)}{\varphi'\Phi'((\varphi')^2)}\bigg|_{z_x}.
\end{equation*}
Now taking \eqref{cond6.4} into account, since $z_y=z_x+d(x_0,y_0)+\e_0>z_x$, we get a contradiction. Then we must have
\begin{equation*}
Z(x,y)=\psi(u(y))-\psi(u(x))-d(x,y)\leq 0,
\end{equation*}
which is the desired result.
\end{proof}

\section{Gradient estimates and rigidity results for Finsler manifolds with nonnegative weighted Ricci curvature}\label{sec7}

In this section we continue the study on Finsler manifolds in Section \ref{sec6}. More precisely, we derive the gradient estimates of Modica type and some standard rigidity results.

Define
\begin{equation*}
c_u:=\sup_{r\in [\inf u,\sup u]} Q(r).
\end{equation*}

Then we shall prove the following gradient estimates of Modica type.
\begin{thm}\label{thm7.1}
Let $(M^n,F,m)$ be a compact Finsler measure space with nonnegative weighted Ricci curvature $Ric_\infty$ and $u\in W^{1,p}(M)$ a solution of Equation \eqref{eq6.0}. Then for all $x\in M$, there holds
\begin{equation}\label{eq2}
\Phi'(F^2(\n u))F^2(\n u)-\f{1}{2}\Phi(F^2(\n u))\leq c_u-Q(u).
\end{equation}
\end{thm}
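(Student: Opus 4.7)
The proof will be a direct application of Corollary \ref{corr6.1} with a carefully chosen barrier $\varphi$, combined with the conservation law associated to the one-dimensional ODE.

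The plan is as follows. First I would observe that when $\varphi$ satisfies the ODE $\Lambda((\varphi')^2)\varphi'' + q(\varphi)=0$ from Remark \ref{rem-perturbation}, multiplication by $\varphi'$ together with the identity $K'(s)=\tfrac12\Lambda(s)$ and $Q'=q$ yields the conservation law
\begin{equation*}
\frac{d}{ds}\bigl[K((\varphi')^2)+Q(\varphi)\bigr]=0,
\end{equation*}
so that $K((\varphi')^2)+Q(\varphi)\equiv C$ for some constant. This is the Finsler analogue of the constant-$P$-function trick used for \eqref{eq1} in the introduction.

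Next, for each small $\varepsilon>0$ I would construct a barrier $\varphi_\varepsilon$ by choosing the conserved value $C=c_u+\varepsilon$ and solving
\begin{equation*}
K((\varphi_\varepsilon')^2)+Q(\varphi_\varepsilon)=c_u+\varepsilon,\qquad \varphi_\varepsilon(a_\varepsilon)=\inf u,
\end{equation*}
so that $\varphi_\varepsilon'=\sqrt{K^{-1}(c_u+\varepsilon-Q(\varphi_\varepsilon))}\geq \sqrt{K^{-1}(\varepsilon)}>0$ (the inverse $K^{-1}$ exists on $(0,\infty)$ because $K'=\tfrac12\Lambda>0$ by the structure condition \eqref{cond6.2}). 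The estimate $\varphi_\varepsilon'>0$, uniformly bounded below, guarantees $\varphi_\varepsilon$ can be extended until its value reaches $\sup u$ at some $b_\varepsilon$, yielding a valid diffeomorphism $\varphi_\varepsilon\colon[a_\varepsilon,b_\varepsilon]\to[\inf u,\sup u]$. By construction $\varphi_\varepsilon$ solves the Euler--Lagrange equation of Remark \ref{rem-perturbation}, hence is admissible as a barrier in Theorem~\ref{thm6.1}.

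Then I would apply Corollary \ref{corr6.1} with barrier $\varphi_\varepsilon$ to obtain
\begin{equation*}
F(\n u(x))\leq \varphi_\varepsilon'(\psi_\varepsilon(u(x)))\quad\text{for all }x\in M.
\end{equation*}
Since $K$ is strictly increasing on $(0,\infty)$, squaring and applying $K$ yields
\begin{equation*}
K(F^2(\n u(x)))\leq K\bigl((\varphi_\varepsilon'(\psi_\varepsilon(u(x))))^2\bigr)=c_u+\varepsilon-Q(u(x)),
\end{equation*}
where the last equality is the conservation law evaluated at $\psi_\varepsilon(u(x))$. Letting $\varepsilon\to 0^+$ gives $K(F^2(\n u))\leq c_u-Q(u)$, which is exactly \eqref{eq2}.

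The main subtlety will be checking that the barrier $\varphi_\varepsilon$ really covers the full range $[\inf u,\sup u]$: this requires knowing the ODE solution does not terminate early, which follows from the uniform lower bound $\varphi_\varepsilon'\geq\sqrt{K^{-1}(\varepsilon)}>0$ along with standard ODE extension. A minor technical point is that Corollary \ref{corr6.1} is stated for solutions in $W^{1,p}(M)$, and although the barrier argument uses the viscosity framework, the $C^{1,\alpha}$ regularity recalled in the previous subsection ensures $F(\n u)$ is everywhere defined as a continuous function, so the pointwise estimate makes sense.
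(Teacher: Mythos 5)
Your proposal is correct and follows essentially the same route as the paper: the paper also constructs, for each $c>c_u$ (your $c=c_u+\varepsilon$), a barrier defined by the conservation law $K((\varphi')^2)+Q(\varphi)=c$ with $\varphi'>0$ covering $[\inf u,\sup u]$, notes that differentiating this gives the ODE of Remark \ref{rem-perturbation}, applies Corollary \ref{corr6.1}, and lets $c\to c_u^+$. The only cosmetic difference is that the paper exhibits the barrier directly via the integral formula \eqref{eq7.1} rather than by an ODE extension argument, but the content is identical.
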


\begin{proof}[Proof of Theorem \ref{thm7.1}]
Fix any
\begin{equation*}
c>c_u=\sup_{r\in [\inf u,\sup u]}Q(r).
\end{equation*}
Then we can solve
\begin{equation}\label{eq3.5}
K((\varphi')^2)=c-Q(\varphi)
\end{equation}
to get a solution $\varphi_c$ with $\varphi_c'>0$ and its image being $[\inf u,\sup u]$. In fact,
\begin{equation}\label{eq7.1}
s=s_0+\int_{\inf u}^{\varphi_c}\f{d\varphi}{\sqrt{K^{-1}\circ (c-Q(\varphi))}}
\end{equation}
for $\varphi_c\in [\inf u,\sup u]$. Differentiating \eqref{eq3.5} we know $\varphi_c$ solves
\begin{equation*}
\Lambda((\varphi')^2)\varphi''+q(\varphi)=0.
\end{equation*}
Now noting Remark \ref{rem-perturbation}, we can apply Corollary \ref{corr6.1} to get
\begin{equation*}
K(F^2(\n u))+Q(u)\leq K((\varphi_{c}')^2)+Q(\varphi_{c})= c.
\end{equation*}
Finally since $c>c_u$ is arbitrary we have
\begin{equation*}
K(F^2(\n u))+Q(u)\leq c_u.
\end{equation*}
So we complete the proof of Theorem \ref{thm7.1}.

\end{proof}

Another application of our modulus of continuity estimate, Theorem \ref{thm6.1}, is a rigidity result for $u$ concerning $c_u$ as follows.
\begin{thm}\label{thm4}
Let $u$ be as in Theorem \ref{thm7.1}. Suppose $\tau=0$ in the structure conditions \eqref{cond6.1} and \eqref{cond6.2}. Moreover, when $p>2$, we assume at any $r_0$ with $Q(r_0)=c_u$ and $Q'(r_0)=0$ there holds $Q(r)=Q(r_0)+O(|r-r_0|^p)$ as $r\rightarrow r_0$. If there exists a point $x_0\in M$ satisfying $Q(u(x_0))=c_u$ and $Q'(u(x_0))=0$, then $u$ is constant.
\end{thm}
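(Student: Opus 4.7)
The strategy is to argue by contradiction using the two-point function estimate of Theorem~\ref{thm6.1} applied to the same one-parameter family of barriers $\{\varphi_c\}_{c > c_u}$ constructed in the proof of Theorem~\ref{thm7.1}. Suppose for contradiction that $u$ is not constant, and set $r_0 := u(x_0)$, so $Q(r_0) = c_u$, $Q'(r_0) = 0$, and $r_0 \in [\inf u, \sup u]$ with this interval of positive length. At least one of $\inf u < r_0$ or $\sup u > r_0$ must hold; by symmetry I assume the latter and fix $y \in M$ with $u(y) = \sup u > r_0$. For each $c > c_u$ the conservation law $K((\varphi_c')^2) + Q(\varphi_c) = c$ implies $\psi_c'(v) = 1/\sqrt{K^{-1}(c - Q(v))}$, so Theorem~\ref{thm6.1} (in the equality form allowed by Remark~\ref{rem-perturbation}) yields
\begin{equation*}
\int_{r_0}^{u(y)} \frac{dv}{\sqrt{K^{-1}(c - Q(v))}} \;=\; \psi_c(u(y)) - \psi_c(r_0) \;\leq\; d(x_0, y),
\end{equation*}
a bound independent of $c$, since the right-hand side is finite by compactness of $M$.

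The heart of the proof is to show the left-hand side diverges as $c \searrow c_u$. Since the integrand increases pointwise as $c \searrow c_u$, monotone convergence reduces this to verifying
\begin{equation*}
\int_{r_0}^{r_0+\eta} \frac{dv}{\sqrt{K^{-1}(c_u - Q(v))}} = +\infty
\end{equation*}
for some small $\eta > 0$. The structure condition~\eqref{cond6.2} with $\tau = 0$, specialised to $W = V$, yields $\Lambda(s) \asymp s^{(p-2)/2}$ near $s = 0$, whence $K(s) \asymp s^{p/2}$ and $K^{-1}(t) \asymp t^{2/p}$ for small $t \geq 0$. Combined with the $C^2$ Taylor bound $c_u - Q(v) \leq C(v - r_0)^2$ (available since $Q'(r_0) = 0$), this gives, when $p \leq 2$, $\sqrt{K^{-1}(c_u - Q(v))} \leq C'|v - r_0|^{2/p}$ with $2/p \geq 1$, producing a non-integrable singularity at $r_0$. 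For $p > 2$ the extra hypothesis $Q(r) - c_u = O(|r - r_0|^p)$ upgrades the estimate to $c_u - Q(v) \leq C|v - r_0|^p$, yielding $\sqrt{K^{-1}(c_u - Q(v))} \leq C'|v - r_0|$ and hence logarithmic divergence. In either case this contradicts the uniform bound $d(x_0, y) < \infty$, forcing $u$ to be constant.

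The main obstacle is precisely this divergence analysis: one must extract the two-sided asymptotic $K^{-1}(t) \asymp t^{2/p}$ from the one-sided structural inequalities~\eqref{cond6.1}--\eqref{cond6.2}, and $p = 2$ is the exact threshold at which the $C^2$ Taylor information on $Q$ suffices on its own---which explains the strictly stronger vanishing hypothesis imposed on $Q$ when $p > 2$. The only remaining bookkeeping is the edge case $r_0 = \sup u$, handled identically by choosing $y$ with $u(y) < r_0$ and using the two-point inequality in the form $\psi_c(r_0) - \psi_c(u(y)) \leq d(y, x_0)$, the singular endpoint then being the upper limit of integration.
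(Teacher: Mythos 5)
Your proof is correct and follows essentially the same route as the paper's: both use the explicit formula $\psi_c'(v)=1/\sqrt{K^{-1}(c-Q(v))}$ from the conservation law, the two-point estimate of Theorem~\ref{thm6.1} applied to the family $\{\varphi_c\}_{c>c_u}$, and the bound $K(s)\geq \tfrac{c_1}{p}s^{p/2}$ (from $\tau=0$ in \eqref{cond6.1}--\eqref{cond6.2}) combined with the Taylor bound on $Q$ (for $1<p\leq 2$) or the assumed $O(|r-r_0|^p)$ vanishing (for $p>2$) to force divergence of the integral as $c\searrow c_u$, contradicting the finite distance bound. The only differences are cosmetic: you integrate away from $r_0$ rather than from $\inf u$ up to $u(x_0)$, treat $p\leq 2$ via the exponent $2/p\geq 1$ instead of reducing to the $p$-th power bound, and invoke monotone convergence explicitly.
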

\begin{rem}
Here we provide an example to illustrate some aspects of Theorem \ref{thm4}: Let $p=2$ and $Q(u)=\sin u$. Then Theorem \ref{thm4} indicates that the image $[\inf u,\sup u]$ of any non-constant solution $u$ can not contain points in $\{2k\pi+\f{\pi}{2}:k\in \N\}$, which gives a restriction on the solutions.
\end{rem}

\begin{proof}[Proof of Theorem \ref{thm4}]

Assume $u$ is not a constant function. Then $[\inf u,\sup u]$ has nonempty interior. Without loss of generality we may assume $u(x_0)>\inf u$. Then using \eqref{eq7.1} we get
\begin{equation*}
\psi_c(u(x_0))-\psi_c(\inf u)=\int_{\inf u}^{u(x_0)}\f{d\varphi}{\sqrt{K^{-1}\circ (c-Q(\varphi))}},\quad c>c_u.
\end{equation*}
Next we claim
\begin{equation*}
\lim_{c\rightarrow c_u^+}\int_{\inf u}^{u(x_0)}\f{d\varphi}{\sqrt{K^{-1}\circ (c-Q(\varphi))}}=+\infty.
\end{equation*}
So when $c$ is close enough to $c_u^+$, we get a contradiction to the modulus of continuity estimate
\begin{equation*}
\psi_c(u(y))-\psi_c(u(x))\leq d(x,y)<+\infty, \quad x,y\in M.
\end{equation*}

To prove the claim, first we observe that
\begin{equation}\label{eq4.1}
s^\f{p}{2}\leq \f{2}{\e_0}K(s),0<\e_0<\f{2}{p}c_1.
\end{equation}
In fact, define $G(s)=2K(s)-\e_0 s^{\f{p}{2}}$ with $\e_0<\f{2}{p}c_1$. Then $G(0)=0$ and
\begin{equation*}
G'(s)=\Lambda(s)-\e_0\f{p}{2}s^{\f{p}{2}-1}>0
\end{equation*}
by the assumption. Thus $G(s)\geq 0$, which is \eqref{eq4.1}.

Therefore, we obtain
\begin{equation*}
\sqrt{K^{-1}\circ (c_u-Q(\varphi))}\leq c (c_u-Q(\varphi))^{1/p}=c (Q(u(x_0))-Q(\varphi))^{1/p}.
\end{equation*}
Note that for $1<p\leq 2$, by Taylor expansion $Q(r)-Q(r_0)=O(|r-r_0|^2)=O(|r-r_0|^p)$. For $p>2$, by the assumption $Q(r)-Q(r_0)=O(|r-r_0|^p)$. In either case, we can conclude
\begin{equation*}
\sqrt{K^{-1}\circ (c_u-Q(\varphi))}\leq \tilde{c}(u(x_0)-\varphi),\quad \varphi\leq u(x_0),
\end{equation*}
which implies that
\begin{equation*}
\int_{\inf u}^{u(x_0)}\f{d\varphi}{\sqrt{K^{-1}\circ (c_u-Q(\varphi))}}=+\infty.
\end{equation*}
So we obtain the desired claim and finish the proof of Theorem \ref{thm4}.

\end{proof}

Also we can give a characterization of $c_u$.
\begin{thm}\label{thm5}
Under the same assumptions as Theorem \ref{thm4}, we have
\begin{equation*}
c_u=\max\{Q(\inf u),Q(\sup u)\}.
\end{equation*}
Moreover, if there exists a point $x_0\in M$ satisfying $Q(u(x_0))=c_u$, then either $u(x_0)=\inf u$ or $u(x_0)=\sup u$.
\end{thm}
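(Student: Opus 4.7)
The plan is to deduce Theorem~\ref{thm5} as a short bookkeeping consequence of the rigidity result in Theorem~\ref{thm4}, combining it with the continuity of $u$, the continuity of $Q$, and the connectedness of $M$ (built into the definition of a Finsler manifold in Section~\ref{sec6}).

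First I would observe that since $M$ is connected and $u$ is continuous on the compact manifold $M$, the image $u(M)$ is a connected and compact subset of $\R$ containing both $\inf u$ and $\sup u$, hence equals the entire closed interval $[\inf u,\sup u]$. In particular, every value $r\in[\inf u,\sup u]$ is realised as $u(x_0)$ for some $x_0\in M$. This is the one elementary ingredient that is not already handled inside the proof of Theorem~\ref{thm4}.

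For the identity $c_u=\max\{Q(\inf u),Q(\sup u)\}$, I would pick a point $r^{\ast}\in[\inf u,\sup u]$ at which the continuous function $Q$ attains its maximum on that interval, so that $c_u=Q(r^{\ast})$. If $r^{\ast}\in\{\inf u,\sup u\}$ there is nothing to prove. Otherwise $r^{\ast}$ lies in the open interval $(\inf u,\sup u)$ and the usual first-order condition gives $Q'(r^{\ast})=0$; the previous paragraph produces some $x_0\in M$ with $u(x_0)=r^{\ast}$, and then Theorem~\ref{thm4} forces $u$ to be constant, which is incompatible with $r^{\ast}\in(\inf u,\sup u)$. This contradiction shows $r^{\ast}\in\{\inf u,\sup u\}$ and the first claim follows.

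The rigidity clause is proved by exactly the same mechanism. If $x_0\in M$ satisfies $Q(u(x_0))=c_u$ but $u(x_0)\in(\inf u,\sup u)$, then $u(x_0)$ is an interior maximiser of $Q$ on $[\inf u,\sup u]$, so $Q'(u(x_0))=0$; Theorem~\ref{thm4} then yields $u\equiv u(x_0)$, which again contradicts $u(x_0)\in(\inf u,\sup u)$. Hence $u(x_0)$ must coincide with $\inf u$ or $\sup u$. I do not anticipate any real technical obstacle here: the work has been done in Theorem~\ref{thm4}, and this statement just reformulates that rigidity in terms of where the value $c_u$ can appear.
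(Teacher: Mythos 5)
Your proposal is correct and follows essentially the same route as the paper: both reduce the statement to Theorem~\ref{thm4} by noting that an interior maximiser $r$ of $Q$ on $[\inf u,\sup u]$ satisfies $Q'(r)=0$ and, by continuity of $u$ on the connected compact $M$, is attained as a value $u(x_0)$, whence rigidity forces $u$ constant and yields the contradiction. The only difference is cosmetic (you argue via a chosen maximiser and case split, the paper assumes $c_u>\max\{Q(\inf u),Q(\sup u)\}$ directly), so nothing further is needed.
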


\begin{proof}[Proof of Theorem \ref{thm5}]
Without loss of generality, we assume that $u$ is not a constant. Assume $c_u>\max\{Q(\inf u),Q(\sup u)\}$. Then there exists $r_0\in (\inf u,\sup u)$ such that
\begin{equation*}
\sup_{r\in [\inf u,\sup u]} Q(r)=c_u=Q(r_0).
\end{equation*}
So $r_0$ is a local maximum point for $Q$ and $Q'(r_0)=0$.

Meanwhile by the continuity of $u$, there exists a point $y_0$ such that $u(y_0)=r_0$. Thus $Q(u(y_0))=c_u$ and $Q'(u(y_0))=0$. So $u$ is constant by Theorem~\ref{thm4}, which is a contradiction. This completes the proof.

\end{proof}

\section{Noncompact manifolds and manifolds with boundary}\label{sec8}

In this section we consider various extensions of the two-point function method. More precisely, for noncompact manifolds without boundary, we shall make use of the ``translation invariance'' of the equation; while for compact manifolds with boundary and solutions with Dirichlet boundary condition, we derive a sharp barrier estimate near the boundary. In addition, we also consider anisotropic PDEs on certain unbounded domains with boundary in $\R^n$, which is a special Finsler measure space with boundary.

\subsection{Noncompact Riemannian manifolds without boundary}\label{sec:noncompact}

In this subsection we shall use the Cheeger-Gromov convergence of a sequence of complete pointed Riemannian manifolds $(N_k^n,g_k,x_k)$ to complete pointed Riemannian manifold $(N^n_\infty,g_\infty,x_\infty)$. By definition, it means that there exist an exhaustion $\{U_k\}$ of $N_\infty$ by open domains with $x_\infty\in U_k$ ($k\in \N$), and a sequence of diffeomorphisms $\Phi_k:U_k\rightarrow V_k:=\Phi_k(U_k)\subset N_k$ with $\Phi_k(x_\infty)=x_k$ such that $(U_k,\Phi^*_k(g_k|_{V_k}))$ converges smoothly and locally uniformly to $(N_\infty,g_\infty)$. Recall the Cheeger-Gromov convergence theorem (see e.g. \cite{CCG08}): any sequence of complete pointed Riemannian manifolds $(N_k^n,g_k,x_k)$ with uniformly bounded geometry converges to a complete pointed Riemannian manifold $(N^n_\infty,g_\infty,x_\infty)$ in the sense of Cheeger-Gromov, up to a subsequence.

Our first result in this subsection is as follows.
\begin{thm}\label{thm8.1}
Let $(M^n,g)$ be a complete noncompact Riemannian manifold. Assume $M$ is of bounded geometry and has nonnegative Ricci curvature. Let $u$ be a bounded viscosity solution of Equation \eqref{eq-G} on $M$. Suppose $\varphi:[a,b]\rightarrow [\inf u,\sup u]$ is a $C^2$ solution of
\begin{align}
\alpha(\varphi,\varphi')\varphi''+q(\varphi,\varphi')&=0\quad\text{on}\ [a,b];\label{cond8.2}\\
\varphi(a)=\inf u;\quad \varphi(b)=\sup u;\quad \varphi'&>0\quad\text{on}\ [a,b].\label{cond8.1}
\end{align}
Moreover let $\psi$ be the inverse of $\varphi$, i.e. $\psi(\varphi(z))=z$. Then we have
\begin{equation*}
\psi(u(y))-\psi(u(x))-d(x,y)\leq 0,\: \forall x,y\in M.
\end{equation*}
\end{thm}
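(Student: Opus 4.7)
The idea is to reduce to the attained-maximum setting of Theorem~\ref{thm3.1} via a ``translation'' argument that exploits the bounded geometry of $M$. Set $Z(x,y)=\psi(u(y))-\psi(u(x))-d(x,y)$ and suppose for contradiction that $\sup_{M\times M}Z=\e_0>0$. Since $u$ is bounded, $\psi\circ u$ is bounded, so along any sequence $(x_k,y_k)$ with $Z(x_k,y_k)\to\e_0$ the distances $d(x_k,y_k)$ stay uniformly bounded.

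Next I would apply the Cheeger-Gromov compactness theorem recalled at the start of Subsection~\ref{sec:noncompact}: the pointed manifolds $(M,g,x_k)$ have uniformly bounded geometry, so a subsequence converges to a complete pointed Riemannian manifold $(M_\infty,g_\infty,x_\infty)$ via an exhaustion $\{U_k\}$ of $M_\infty$ and diffeomorphisms $\Phi_k:U_k\to V_k\subset M$ with $\Phi_k(x_\infty)=x_k$ and $\Phi_k^*g\to g_\infty$ smoothly on compact sets. The smooth convergence forces $Ric\geq 0$ on $M_\infty$. Because $d(x_k,y_k)$ is bounded we can also arrange $\Phi_k^{-1}(y_k)\to y_\infty\in M_\infty$. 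The pulled-back functions $u_k:=u\circ\Phi_k$ are bounded viscosity solutions of \eqref{eq-G} on $(U_k,\Phi_k^*g)$ precisely because \eqref{eq-G} is translation invariant (has no explicit $x$-dependence). Local interior regularity for the nonsingular quasilinear equation \eqref{eq-G}, combined with the bounded geometry, gives local equi-continuity of $\{u_k\}$; extracting a further subsequence, $u_k\to u_\infty$ locally uniformly, and the standard stability of viscosity solutions under joint smooth metric and uniform solution convergence shows that $u_\infty$ is a viscosity solution of \eqref{eq-G} on $(M_\infty,g_\infty)$ with range contained in $[\inf u,\sup u]$. By construction the limit two-point function
\[
Z_\infty(x,y):=\psi(u_\infty(y))-\psi(u_\infty(x))-d_\infty(x,y)
\]
satisfies $Z_\infty\leq \e_0$ on $M_\infty\times M_\infty$ and $Z_\infty(x_\infty,y_\infty)=\e_0>0$, so the supremum is attained at an interior pair with $x_\infty\neq y_\infty$.

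At this attained interior maximum I would then run the argument of Theorem~\ref{thm3.1} on $(M_\infty,g_\infty)$ verbatim: perturb $\varphi$ to $\varphi_\delta$ (as in the derivation of Theorem~\ref{thm1} from Theorem~\ref{thm3.1}) to obtain the strict monotonicity \eqref{cond3.2}; replace $d_\infty$ near $(x_\infty,y_\infty)$ by its smooth envelope $\tilde d$ built from parallel Fermi coordinates along the minimizing geodesic from $x_\infty$ to $y_\infty$; apply the maximum principle for semicontinuous functions together with Lemma~\ref{lem:incjet} to pass semijet information from $\psi\circ u_\infty$ to $u_\infty$; and combine the resulting subsolution/supersolution inequalities after multiplication by the weight matrix $W$ with $f\equiv 1$, $\kappa=0$. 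This yields
\[
0\leq \frac{q(\varphi,\varphi')+\varphi''\alpha(\varphi,\varphi')}{\varphi'\beta(\varphi,\varphi')}\bigg|_{z_{x_\infty}}^{z_{y_\infty}},
\]
which contradicts the strict monotonicity \eqref{cond3.2}, since $z_{y_\infty}-z_{x_\infty}=d_\infty(x_\infty,y_\infty)+\e_0>0$. Finally, letting $\delta\to 0^+$ recovers the statement for the given $\varphi$ satisfying \eqref{cond8.2}--\eqref{cond8.1}.

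The main obstacle is the passage to the limit: one must ensure local $C^0$-compactness of the pulled-back solutions $\{u_k\}$ and show that the limit inherits the viscosity-solution property under simultaneous smooth convergence of the background metric. The bounded geometry together with the uniform bound on $u$ provide intrinsic local modulus-of-continuity estimates for solutions of \eqref{eq-G}, and it is precisely this uniform regularity, coupled with the translation invariance of \eqref{eq-G}, that distinguishes the noncompact case from the proof of Theorem~\ref{thm1} and is the one new technical ingredient needed here.
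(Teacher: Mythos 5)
Your overall strategy is the paper's: exploit the translation invariance of \eqref{eq-G}, run a Cheeger--Gromov compactness argument along a maximizing sequence so that the supremum of the two-point function is attained on a limit manifold, use stability of viscosity solutions to retain a solution in the limit, and then apply the interior argument of Theorem~\ref{thm3.1}. (The paper organizes the limit slightly differently: it takes the supremum over the whole class of manifolds with the same geometric bounds and all solutions with the same bounds, so the limit configuration automatically lies in the class, whereas you transfer the bound $Z\le\varepsilon_0$ to $M_\infty\times M_\infty$ through the maps $\Phi_k$; both work, yours needing only the standard comparison of $d_{g}(\Phi_k x,\Phi_k y)$ with $d_\infty(x,y)$ under Cheeger--Gromov convergence.)

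However, as written there is a genuine ordering problem with the perturbation. You fix $(x_\infty,y_\infty)$ as a maximum point of the \emph{unperturbed} $Z_\infty$ and only afterwards perturb $\varphi$ to $\varphi_\delta$. The semijet/maximum-principle computation at $(x_\infty,y_\infty)$ must be carried out for the function whose maximum that point actually is, namely the one built from the unperturbed $\psi$; and for the unperturbed barrier, \eqref{cond8.2} makes the quantity $\frac{q(\varphi,\varphi')+\varphi''\alpha(\varphi,\varphi')}{\varphi'\beta(\varphi,\varphi')}$ identically zero, so the final inequality reads $0\le 0$ and no contradiction results. You cannot repair this by switching to $\psi_\delta$ at that stage, because $(x_\infty,y_\infty)$ need not be even a local maximum of the perturbed two-point function $Z_\delta$, and $M_\infty$ is noncompact, so $Z_\delta$ need not attain its supremum anywhere. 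The fix, which is exactly what the paper does, is to perturb first: for each fixed $\delta>0$ run the entire argument (contradiction hypothesis, Cheeger--Gromov limit, attained maximum, Theorem~\ref{thm3.1} computation) with $\varphi_\delta$, $\psi_\delta$, $Z_\delta$, where the strict inequality \eqref{cond3.2} yields the contradiction; conclude $Z_\delta\le 0$ on $M\times M$ for every $\delta$, and only then let $\delta\to 0^+$ to obtain the statement for the given $\varphi$. With that reordering your outline coincides with the paper's proof; the remaining points you flag (local equicontinuity of the pulled-back solutions and viscosity stability under the converging metrics) are likewise used, and only briefly justified, in the paper, which cites \cite{CIL92} for the stability step.
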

\begin{rem}
It can be seen from the proof of Theorem \ref{thm8.1} that for noncompact manifolds the argument depends crucially on certain ``translation invariance'' of Equation \eqref{eq-G}. The exploitation of the translation invariance has already appeared in the known works which use $P$-function method. Our proof of Theorem \ref{thm8.1} is a combination of the translation invariance and the two-point functions method, and Theorem \ref{thm8.1} can recover the results in e.g. \cite{CGS94,Mod85}.
\end{rem}

\begin{proof}[Proof of Theorem \ref{thm8.1}]
The proof is by contradiction, and proceeds as in the proof of Theorem \ref{thm3.1} with suitable modifications. First by perturbation we may assume $\varphi$ satisfies
\begin{align*}
\f{d}{dz}&\left(\frac{q(\varphi,\varphi')+\varphi''\alpha(\varphi,\varphi')}{\varphi'\beta(\varphi,\varphi')}\right)<0.
\end{align*}

Let $\mathcal{N}$ denote the set of complete noncompact Riemannian manifolds $(N^n,g_N)$ with $Ric_N\geq 0$, and with bounded geometry of the same bounds as those for $ M$. For any $(N^n,g_N)\in \mathcal{N}$, let $S_N$ be the set of all viscosity solutions of Equation \eqref{eq-G} on $N$ with the same upper and lower bounds as those for $u$ on $M$. Set
\begin{equation*}
Z(N, v,x,y)=\psi(v(y))-\psi(v(x))-d_{g_N}(x,y), N\in \mathcal{N}, v\in S_N, x,y\in N.
\end{equation*}
It suffices to prove $\sup_{N\in \mathcal{N}, v\in S_N, x,y\in N}Z(N,v,x,y)\leq 0$. Assume otherwise
\begin{equation}\label{eq-8.1}
\sup_{N\in \mathcal{N}, v\in S_N, x,y\in N}Z(N,v,x,y)=\varepsilon_0>0.
\end{equation}
So there exist $N_k\in \mathcal{N}$, $v_k\in S_{N_k}$ and $x_k,y_k\in N_k$ ($k\in \N$) such that
\begin{equation}\label{eq-8.2}
\varepsilon_0-\frac{1}{k}\leq Z(N_k,v_k,x_k,y_k)\leq  \varepsilon_0.
\end{equation}

Now by Cheeger-Gromov convergence, we know
\begin{equation*}
(N_k,g_{N_k},x_k)\rightarrow (N_\infty, g_\infty, x_\infty),\quad k\rightarrow \infty,
\end{equation*}
in the sense of Cheeger-Gromov. Moreover, the followings hold true: (i) $v_k$ on $N_k$ converge locally uniformly to a viscosity solution $v_\infty$ of Equation \eqref{eq-G} on $N_\infty$ with the same bounds as $u$ (see e.g. \cite[Lem.~6.1]{CIL92}), and (ii) up to a subsequence $y_k\rightarrow y_\infty \in N_\infty$ in the sense of Cheeger-Gromov as $k\rightarrow \infty$.

So $v_\infty \in S_{N_\infty}$. Then sending $k$ to $\infty$ in Equation \eqref{eq-8.2} and noting \eqref{eq-8.1}, we may derive
\begin{equation*}
Z(N_\infty,v_\infty,x,y)\leq  \varepsilon_0,\quad x,y\in N_\infty,
\end{equation*}
with equality at $(x,y)=(x_\infty,y_\infty)$.

Then the remaining argument is the same as in the proof of Theorem \ref{thm3.1}, which finishes the proof.
\end{proof}

Analogously, we are able to prove the following result for the case where the Ricci curvature has a negative lower bound. We omit its proof here.

\begin{thm}\label{thm8.2}
Let $(M^n,g)$ be a complete noncompact Riemannian manifold. Assume $M$ is of bounded geometry and its Ricci curvature satisfies $Ric\geq (n-1)\kappa$, $\kappa< 0$. Let $u$ be a bounded viscosity solution of Equation \eqref{eq-G} on $M$. Suppose the barrier $\varphi:[a,b]\rightarrow [\inf u,\sup u]$ satisfies
\begin{align*}
&\varphi'>0,\\
\frac{q(\varphi,\varphi')+\varphi''\alpha(\varphi,\varphi')}{\varphi'\beta(\varphi,\varphi')}\bigg|_z&+(n-1)\frac{\rho'}{\rho}=0,
\end{align*}
where $\rho:[a,b]\rightarrow \R^+$ satisfies $\rho''+\kappa \rho=0$ and $(\frac{\rho'}{\rho})'>0$. Moreover let $\psi$ be the inverse of $\varphi$, i.e. $\psi(\varphi(z))=z$. Then we have
\begin{equation*}
\psi(u(y))-\psi(u(x))-d(x,y)\leq 0,\: \forall x,y\in M.
\end{equation*}
\end{thm}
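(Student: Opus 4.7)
The plan is to replay the scheme of Theorem \ref{thm8.1}, substituting the compact-case ingredient Theorem \ref{thm3.1} by its negative-curvature analogue Theorem \ref{thm4.1}. First I would perturb the barrier $\varphi$ as in Section \ref{sec4} so that the combination $\frac{q(\varphi,\varphi')+\varphi''\alpha(\varphi,\varphi')}{\varphi'\beta(\varphi,\varphi')}+(n-1)\frac{\rho'}{\rho}$ becomes strictly decreasing in $z$, guaranteeing that the Ricci comparison step produces a strict (rather than borderline) inequality at the limit.

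Next I would set up a universal two-point functional on the class of admissible data. Let $\mathcal{N}_\kappa$ denote the collection of complete noncompact Riemannian manifolds whose bounded-geometry bounds coincide with those of $M$ and whose Ricci curvature satisfies $Ric\geq (n-1)\kappa$; for $N\in\mathcal{N}_\kappa$, let $S_N$ be the bounded viscosity solutions of \eqref{eq-G} on $N$ with the same $L^\infty$ bounds as $u$, and set
\[
Z(N,v,x,y) := \psi(v(y)) - \psi(v(x)) - d_{g_N}(x,y).
\]
Assume for contradiction $\sup Z = \varepsilon_0 > 0$ and pick a maximizing sequence $(N_k,v_k,x_k,y_k)$. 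Since $\psi$ is bounded on $[\inf u,\sup u]$, the distances $d_{g_{N_k}}(x_k,y_k)$ stay uniformly bounded. Applying Cheeger-Gromov compactness to $(N_k,g_{N_k},x_k)$ produces a limit $(N_\infty,g_\infty,x_\infty)\in\mathcal{N}_\kappa$; the pulled-back solutions $\Phi_k^*(v_k)$ converge locally uniformly on the exhausting domains, and by stability of viscosity solutions under such convergence (\cite[Lem.~6.1]{CIL92}) the limit $v_\infty\in S_{N_\infty}$ is a viscosity solution. Boundedness of $d_{g_{N_k}}(x_k,y_k)$ yields a Cheeger-Gromov limit $y_\infty\in N_\infty$, and passing to the limit gives $Z(N_\infty,v_\infty,\cdot,\cdot)\leq\varepsilon_0$ on $N_\infty\times N_\infty$ with equality at $(x_\infty,y_\infty)$.

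With an interior maximum of $Z$ attained on $N_\infty\times N_\infty$, the proof of Theorem \ref{thm4.1} transplants verbatim: the semijet maximum-principle computation at $(x_\infty,y_\infty)$, combined with the radial choice $f(s) = \rho(z_{x_\infty}+\varepsilon_0+s)$ and the Ricci bound on $N_\infty$, forces the (now strict) monotonicity of $\frac{q+\varphi''\alpha}{\varphi'\beta}+(n-1)\rho'/\rho$ to be violated, producing the contradiction. The main obstacle I anticipate is the compactness step rather than the variational step: one has to verify that $\mathcal{N}_\kappa$ is closed under Cheeger-Gromov limits with the Ricci lower bound preserved, that the viscosity property passes through the pullbacks $\Phi_k^*(v_k)$ on a variable exhaustion, and that $(x_k,y_k)$ does not escape (which is handled by the $d_{g_{N_k}}(x_k,y_k)$ bound noted above). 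Modulo these standard facts, the argument is a direct hybrid of Theorem \ref{thm8.1} and Theorem \ref{thm4.1}.
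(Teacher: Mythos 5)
Your proposal is correct and is essentially the proof the paper intends (it omits the argument as ``analogous''): the translation-invariance/Cheeger--Gromov scheme of Theorem \ref{thm8.1}, with the bound $d_{g_{N_k}}(x_k,y_k)\leq b-a$ keeping $y_k$ from escaping, followed by the interior-maximum computation of Theorem \ref{thm4.1} with $f(s)=\rho(z_{x_\infty}+\varepsilon_0+s)$ at the limiting maximum point. Note only that the initial perturbation is not actually needed (and Section \ref{sec4} contains none): the hypothesis $(\rho'/\rho)'>0$ already makes $\frac{q(\varphi,\varphi')+\varphi''\alpha(\varphi,\varphi')}{\varphi'\beta(\varphi,\varphi')}=-(n-1)\frac{\rho'}{\rho}$ strictly decreasing, so the Theorem \ref{thm4.1} argument gives the strict contradiction directly; if you do perturb, you must also let the perturbation parameter tend to zero at the end to recover the stated inequality for the original $\psi$.
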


\subsection{Compact Riemannian manifolds with boundary}\label{sec:boundary}

For manifolds with boundary, we discuss the solutions with Dirichlet boundary condition. First we would like to prove:
\begin{thm}\label{thm8.3}
Let $(M^n,g)$ be a compact Riemannian manifold with nonnegative Ricci curvature and with mean convex boundary $\pt M$. Let $u$ be a viscosity solution of Equation \eqref{eq-G} on $M$ with Dirichlet boundary condition $u|_{\pt M}=u_0$ for some constant $u_0$. Suppose $\varphi:[a,b]\rightarrow [\inf u,\sup u]$ is a $C^2$ solution of
\begin{align*}
\alpha(\varphi,\varphi')\varphi''+q(\varphi,\varphi')&=0\quad\text{on}\ [a,b];\\
\varphi(a)=\inf u;\quad \varphi(b)=\sup u;\quad \varphi'&>0\quad\text{on}\ [a,b].
\end{align*}
Moreover let $\psi$ be the inverse of $\varphi$, i.e. $\psi(\varphi(z))=z$. Then we have
\begin{equation*}
\psi(u(y))-\psi(u(x))-\bar{d}(x,y)\leq 0,\: \forall x,y\in M.
\end{equation*}
\end{thm}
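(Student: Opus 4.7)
The plan is to extend the proof of Theorem~\ref{thm1} to the Dirichlet setting, handling any boundary maximum via either an immediate triviality or a sharp barrier at $\pt M$. Set $Z(x,y) = \psi(u(y)) - \psi(u(x)) - \bar{d}(x,y)$ and assume for contradiction that $\e_0 := \sup_{M\times M} Z > 0$, attained at some $(x_0, y_0)$. By the strong maximum principle for the quasilinear equation \eqref{eq-G}, we may assume $u$ is non-constant; the constant Dirichlet data then force $u_0 \in \{\inf u, \sup u\}$, and WLOG $u_0 = \sup u$. Perturbing $\varphi$ slightly as in the deduction of Theorem~\ref{thm1} from Theorem~\ref{thm3.1}, we may also assume the strict monotonicity \eqref{cond3.2}.

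Under $u_0 = \sup u$, the case $x_0 \in \pt M$ is immediate: then $\psi(u(x_0))=b$ is the maximum of $\psi\circ u$, so $\psi(u(y_0))-\psi(u(x_0))\leq 0$, contradicting $\e_0>0$. When both $x_0,y_0\in \text{int}(M)$, the proof of Theorem~\ref{thm3.1} applies essentially verbatim, via the maximum principle for semijets on $M\times M$ combined with the second-variation formula along the minimizing geodesic $\gamma_0$ from $x_0$ to $y_0$. A minor point is that $\gamma_0$ could touch $\pt M$; mean convexity permits a small inward perturbation without increasing length, so the second-variation calculation still produces the contradiction with \eqref{cond3.2}.

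The genuinely new case is $y_0\in \pt M$. Here I would introduce the barrier $\bar{u}(x):=\varphi(b-d(x,\pt M))$ on a tubular neighbourhood $U$ of $\pt M$. A direct computation, using $\alpha\varphi''+q=0$ and $|\n d(\cdot,\pt M)|=1$, shows that the left-hand side of \eqref{eq-G} evaluated at $\bar{u}$ equals $-\beta(\bar{u},\varphi')\,\varphi'(b-d)\,\Delta d(\cdot,\pt M)$, which is non-negative on $U$ because mean convexity of $\pt M$ gives $\Delta d(\cdot,\pt M)\leq 0$ there. As $\bar{u}=u_0=u$ on $\pt M$, viscosity comparison yields the sharp barrier $u\geq \bar{u}$ on $U$.

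With this barrier, one closes the argument as follows. Tangential variations of $y$ along $\pt M$ (where $u\equiv u_0$) kill the tangential part of $\n_y\bar{d}(x_0,y_0)$, so $\gamma_0$ meets $\pt M$ normally at $y_0$ and $\pt_\nu\bar{d}(x_0,y_0)=-1$. The inward first-order maximum condition $\pt_\nu Z(x_0,y_0)\leq 0$ then forces $\pt_\nu u(y_0)\leq -\varphi'(b)$, whereas $u\geq\bar{u}$ with equality on $\pt M$ gives $\pt_\nu u(y_0)\geq \pt_\nu\bar{u}(y_0)=-\varphi'(b)$. Equality together with Hopf's strong maximum principle applied to $u-\bar{u}$ then forces $u\equiv \bar{u}$ on a neighbourhood of $\pt M$, and on that neighbourhood
\[
Z(x,y)=d(x,\pt M)-d(y,\pt M)-\bar{d}(x,y)\leq 0
\]
by the triangle inequality, contradicting $\e_0>0$. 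The main obstacle is a rigorous treatment of the barrier comparison and Hopf-type conclusions at the viscosity-solution level; everything else is a careful repackaging of the interior argument from Section~\ref{sec3}.
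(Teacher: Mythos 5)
Your overall strategy founders on two points that are not technicalities. First, the reduction ``constant Dirichlet data force $u_0\in\{\inf u,\sup u\}$'' is false: Equation \eqref{eq-G} carries no sign or monotonicity hypothesis on $q$, so no maximum principle forces the boundary value to be an extreme value. For example, a sign-changing radial Dirichlet eigenfunction of $\Delta u+\lambda u=0$ on a Euclidean ball satisfies every hypothesis of Theorem \ref{thm8.3} (one can take $\varphi$ to be an increasing arc of $A\sin(\sqrt{\lambda}\,z)$ with $A$ large), yet $\inf u<u_0=0<\sup u$. Consequently your one-line dismissal of the case $x_0\in\partial M$ does not work: in the paper this is a genuine case (its Case 2), handled by the second-variation computation in which the boundary contributes $-f^2H$ and mean convexity is used there, not to locate $u_0$ in the range. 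The paper copes with a general boundary level by writing $u_0=\varphi(c)$ and perturbing the ODE to \eqref{eq-8.3}, whose right-hand side $-\delta(z-c)\varphi'\beta$ changes sign at $c$; this is exactly what produces contradictions on \emph{both} sides, $z_{x_0}<c$ when $y_0\in\partial M$ and $z_{y_0}>c$ when $x_0\in\partial M$, and your single-sided barrier construction has no analogue of this mechanism.

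Second, the step you flag as ``the main obstacle'' is in fact a missing ingredient rather than a loose end. The comparison $u\ge\bar u$ for $\bar u=\varphi(b-d(\cdot,\partial M))$ cannot be obtained from ``viscosity comparison'': the operator in \eqref{eq-G} is not proper (no monotonicity in $u$), may be degenerate ($\alpha,\beta$ only nonnegative), and you would additionally need $\bar u\le u$ on the inner boundary of the tubular neighbourhood, which is not arranged (it fails if the neighbourhood is thinner than $b-a$); the subsequent Hopf lemma for $u-\bar u$ at the viscosity level for this quasilinear degenerate operator is likewise unavailable off the shelf. The paper deliberately avoids any comparison principle: in its boundary cases it studies $c-\psi(u(x))-d(x,\partial M)$ directly, replaces $d(\cdot,\partial M)$ by a smooth upper support function built from a variation whose endpoint moves along $\partial M$ (so the second variation yields the $-f^2H$ term), and then feeds the resulting jets into the sub/supersolution inequalities exactly as in Theorem \ref{thm3.1}, compare \eqref{comparison}. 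Finally, your ``minor point'' about $\gamma_0$ touching $\partial M$ in the interior--interior case is also incorrect: mean convexity does not allow an inward perturbation of a boundary-touching shortest path without increasing length (that would require convexity of the boundary); the paper instead splits such a path at its first and last contact points with $\partial M$ and reduces to the boundary cases already settled. So the skeleton (two-point function, semijets, second variation) matches the paper, but the boundary analysis needs to be redone along the paper's lines rather than via barriers and Hopf-type arguments.
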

\begin{rem}
Here in Theorem \ref{thm8.3} and below in Theorem \ref{thm8.4}, the generalized distance function $\bar{d}:M\times M\rightarrow \R$ is defined as
\begin{equation*}
\bar{d}(x,y):=\inf_\gamma Length(\gamma),
\end{equation*}
where the infimum is taken over all $C^1$ curves lying in $M$ and connecting $x$ and $y$.
\end{rem}
\begin{rem}
The papers \cite{FV10} and \cite{CFV12} considered the problem on, besides bounded domains, certain unbounded domains in $\R^n$ with mean convex boundary. By combining the arguments in Theorems \ref{thm8.1} and \ref{thm8.3}, we may see that the two-point functions method is applicable for the unbounded settings in \cite{FV10,CFV12}. Moreover, in \cite{FV10,CFV12} the solutions are assumed to be nonnegative; while here we do not need this additional assumption. See Theorem \ref{thm8.6} below for the general case.
\end{rem}

\begin{proof}[Proof of Theorem \ref{thm8.3}]
The proof is by contradiction, and proceeds as in the proof of Theorem \ref{thm3.1} with suitable modifications. First assume $\varphi(c)=u_0$ for $c\in [a,b]$. By perturbation we solve the following ODE:
\begin{align}
&\alpha(\varphi_\delta,\varphi_\delta')\varphi_\delta''+q(\varphi_\delta,\varphi_\delta')=-\delta (z-c)\cdot \varphi_\delta'\cdot \beta(\varphi_\delta,\varphi_\delta'),\label{eq-8.3}\\
&\qquad \qquad\varphi_\delta(a)=\varphi(a),\quad \varphi_\delta'(a)=\varphi'(a),\nonumber
\end{align}
where $\delta>0$ is chosen small. In the following we use $\varphi_\delta$ to derive a contradiction, and for simplicity we will use $\varphi$ to stand for $\varphi_\delta$.

As before define
\begin{equation*}
Z(x,y)=\psi(u(y))-\psi(u(x))-\bar{d}(x,y),  x,y\in M.
\end{equation*}
To get a contradiction let us assume
\begin{equation*}
\sup_{x,y\in M}Z(x,y)=\varepsilon_0>0.
\end{equation*}
Since $M$ is compact, there exist $x_0, y_0\in M$ such that
\begin{equation*}
Z(x_0,y_0)=\varepsilon_0>0.
\end{equation*}
Then there are three cases for the position of $(x_0,y_0)$.

\textbf{Case 1}: $x_0\in M$ and $y_0\in \pt M$. In this case it is easy to see that
\begin{equation*}
c-\psi(u(x))-d(x,\pt M)\leq \varepsilon_0,\quad x\in M,
\end{equation*}
with equality at $x=x_0$. The function $d(x,\pt M)$ may not be smooth. So we shall construct a smooth function $\tilde{d}$ on a small neighbourhood $U(x_0)$ of $x_0$ to replace it. The construction of such $\tilde{d}$ is standard (see e.g. \cite[pp.~73--74]{Wu79}), which may be stated as follows.

Note that $d(x_0,y_0)=d(x_0,\pt M):=l$. Let $\gamma$ be the unit speed length-minimizing geodesic joining $x_0$ and $y_0$ with $\gamma(0)=x_0$ and $\gamma(l)=y_0$. For any $X\in \exp_{x_0}^{-1}U(x_0)$, apply the parallel translate along $\gamma$ to $X$ to get $X(t)$ ($t\in [0,l]$) and decompose it as:
\begin{equation*}
X(t)=aX^\perp(t)+b\gamma'(t),
\end{equation*}
where $a$ and $b$ are constants along $\gamma$ satisfying $a^2+b^2=|X|^2$, and $X^\perp(t)$ is a parallel unit vector field along $\gamma$ orthogonal to $\gamma'(t)$.

Then we define the vector field
\begin{equation*}
W(t)=a  f(t)X^\perp(t)+b(1-\frac{t}{l}) \gamma'(t),
\end{equation*}
where $f:[0,l]\rightarrow \R^+$ is a $C^2$ function to be chosen. Next we can define the $n$-parameter family of curves $\gamma_{X}:[0,l]\rightarrow N$ ($X\in \exp_{x_0}^{-1}U(x_0)$) such that (1) $\gamma_0=\gamma$; (2) $\gamma_X(0)=\exp_{x_0}(W(0))$ and $\gamma_X(l)\in \pt M$; and (3) $W(t)$ is induced by the one-parameter family of curves $s\mapsto \gamma_{sX}(t)$ ($-s_0\leq s\leq s_0$, $0\leq t\leq l$); (4) $\gamma_X$ depends on $X$ smoothly. Let $\tilde{d}(x)$ be the length of the curve $\gamma_{X(x)}$ where $x=\exp_{x_0}(X)\in U(x_0)$. Then we get $\tilde{d}(x)\geq d(x,\pt M)$ on $U(x_0)$, $\tilde{d}(x_0)=l$, $D\tilde{d}|_{x_0} =-\gamma'(0)$, and
\begin{align*}
D^2\tilde{d}(X,X)&=-a^2\: f(l)^2II(X^\perp(l),X^\perp(l))+a^2\int_0^l ( (f')^2-f^2 K_M(X^\perp\wedge \gamma'))dt,
\end{align*}
where $II$ is the second fundamental form of the boundary at $y_0$ and $K_M$ denotes the sectional curvature of a two-plane.

Now we may analyse the inequality
\begin{equation*}
c-\psi(u(x))-\tilde{d}(x)\leq \varepsilon_0,\quad x\in U(x_0),
\end{equation*}
with equality at $x=x_0$, just as in the proof of Theorem \ref{thm3.1}. The details do not need elaboration and finally we will obtain (compare \eqref{comparison})
\begin{align*}
0&\leq -\frac{f^2(0)}{\varphi'\beta(\varphi,\varphi')}(q(\varphi,\varphi')+\varphi''\alpha(\varphi,\varphi'))\bigg|_{z_{x_0}}\\
&-f^2(l)H+(n-1)ff'\big|_0^l-\int_0^lf((n-1)f''+Ric(\gamma') f)ds.
\end{align*}
Choosing $f\equiv 1$, and using $H\geq 0$ and $Ric\geq 0$, we get
\begin{align*}
\frac{q(\varphi,\varphi')+\varphi''\alpha(\varphi,\varphi')}{\varphi'\beta(\varphi,\varphi')}\bigg|_{z_{x_0}}\leq 0.
\end{align*}
On the other hand, noting $z_{x_0}=c-l-\varepsilon_0<c$ and Equation \eqref{eq-8.3}, we have
\begin{align*}
\frac{q(\varphi,\varphi')+\varphi''\alpha(\varphi,\varphi')}{\varphi'\beta(\varphi,\varphi')}\bigg|_{z_{x_0}}> 0,
\end{align*}
a contradiction. So Case 1 is ruled out.

\textbf{Case 2}: $x_0\in \pt M$ and $y_0\in M$. In this case we still choose a unit speed length-minimizing geodesic $\gamma$ joining $x_0$ and $y_0$ with $\gamma(0)=x_0$ and $\gamma(l)=y_0$. Then we carry out the similar analysis as in Case 1 to get (again compare \eqref{comparison})
\begin{align*}
0&\leq \frac{f^2(l)}{\varphi'\beta(\varphi,\varphi')}(q(\varphi,\varphi')+\varphi''\alpha(\varphi,\varphi'))\bigg|_{z_{y_0}}\\
&-f^2(0)H+(n-1)ff'\big|_0^l-\int_0^lf((n-1)f''+Ric(\gamma') f)ds.
\end{align*}
Again taking $f\equiv 1$, and using $H\geq 0$ and $Ric\geq 0$, we shall get a contradiction to Equation \eqref{eq-8.3}. So Case 2 is also ruled out.

\textbf{Case 3}: $x_0\in M$ and $y_0\in M$. In this case we have
\begin{equation}\label{eq-8.4}
\psi(u(y))-\psi(u(x))-\bar{d}(x,y)\leq \varepsilon_0,
\end{equation}
for $x$ in a neighbourhood of $x_0$ and $y$ in a neighbourhood of $y_0$, with equality at $(x_0,y_0)$.

First we claim that $\bar{d}$ in the inequality above is indeed $d$, that is, $x_0$ and $y_0$ can be connected by a geodesic in the interior of $M$. To prove the claim, notice that the metric completion $\overline{M}$ of the Riemannian manifold $M$ with boundary is metrically complete  and locally compact. Then by Theorem~2.5.23 in \cite{BBI01}, we know $x_0$ and $y_0$ can be connected by a shortest path $\gamma_0$ in $\overline{M}$ such that $Length(\gamma_0)=\bar{d}(x_0,y_0)$. If $\gamma_0$ lies in the interior of $\overline{M}$, the claim follows immediately. Otherwise denote by $x_*$ (resp. $y_*$) the nearest point on $\gamma_0$ to $x_0$ (resp. $y_0$) which lies on $\pt M$. Then we have
\begin{equation*}
(\psi(u(y_0))-c-d(y_*,y_0))+(c-\psi(u(x_0))-d(x_0,x_*))=\varepsilon_0+\bar{d}(x_*,y_*).
\end{equation*}
Now without loss of generality we may assume $c-\psi(u(x_0))-d(x_0,x_*)=:\varepsilon_1$ is positive. Moreover in view of \eqref{eq-8.4} we may show
\begin{equation*}
c-\psi(u(x))-d(x,x_*)\leq \varepsilon_1,
\end{equation*}
or consequently
\begin{equation*}
c-\psi(u(x))-d(x,\pt M)\leq \varepsilon_1,
\end{equation*}
for $x$ in a neighbourhood of $x_0$, with equality at $x_0$. However this is impossible due to Case 1. So we have proved the claim.

With the claim at hand, the remaining is the same as in the proof of Theorem \ref{thm3.1}. Therefore Case 3 is ruled out, and we have finished the proof of Theorem \ref{thm8.3}.

\end{proof}

Similarly we can prove:

\begin{thm}\label{thm8.4}
Let $(M^n,g)$ be a compact Riemannian manifold with Ricci curvature $Ric\geq (n-1)\kappa$, $\kappa< 0$, and with horo-mean convex boundary such that $H_{\pt M}\geq (n-1)\sqrt{-\kappa}$. Let $u$ be a viscosity solution of Equation~\eqref{eq-G}  with Dirichlet boundary condition $u|_{\pt M}=u_0$ for some constant $u_0$. Suppose the barrier $\varphi:[a,b]\rightarrow [\inf u,\sup u]$ satisfies
\begin{align}
&\varphi'>0,\nonumber\\
\frac{q(\varphi,\varphi')+\varphi''\alpha(\varphi,\varphi')}{\varphi'\beta(\varphi,\varphi')}\bigg|_z&+(n-1)\frac{\rho'}{\rho}=0,\label{cond8.3}
\end{align}
where $\rho:[a,b]\rightarrow \R^+$ satisfies $\rho''+\kappa \rho=0$ and $(\frac{\rho'}{\rho})'>0$. Moreover let $\psi$ be the inverse of $\varphi$, i.e. $\psi(\varphi(z))=z$. Then we have
\begin{equation*}
\psi(u(y))-\psi(u(x))-\bar{d}(x,y)\leq 0,\: \forall x,y\in M.
\end{equation*}
\end{thm}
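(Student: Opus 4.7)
The proof parallels that of Theorem 8.3, with the warped-product barrier argument from Theorem 4.1 replacing the Euclidean one. Suppose for contradiction that $Z(x,y):=\psi(u(y))-\psi(u(x))-\bar d(x,y)$ attains a positive maximum $\varepsilon_0>0$ at some $(x_0,y_0)\in M\times M$, and let $c\in[a,b]$ be the unique value with $\varphi(c)=u_0$. I split into three cases according to whether $x_0$ and $y_0$ lie in $M^\circ$ or on $\partial M$; in each we choose the test function $f$ guided by the warping factor $\rho$, so that the interior part of the inequality closes as in Theorem 4.1 while the boundary term is handled by the horo-mean convexity hypothesis.

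In Case 1 ($x_0\in M^\circ$, $y_0\in\partial M$) we have $z_{y_0}=c$ and $c-\psi(u(x))-d(x,\partial M)\leq\varepsilon_0$ with equality at $x_0$. Introducing the same smooth surrogate $\tilde d(x)$ for $d(x,\partial M)$ built from a variation of the unit-speed minimizing geodesic $\gamma_0$ from $x_0$ to $y_0$ normal to $\partial M$ (as in Theorem 8.3), invoking the viscosity supersolution condition and repeating the trace computation, we obtain for any $C^2$ function $f:[0,l]\to\R^+$
\begin{align*}
0 &\leq -\frac{f^2(0)}{\varphi'\beta}(q+\varphi''\alpha)\bigg|_{z_{x_0}}-f(l)^2 H(y_0)\\
&\quad +(n-1)ff'\big|_0^l-\int_0^l f\bigl((n-1)f''+Ric(\gamma_0')\,f\bigr)\,ds.
\end{align*}
I take $f(s)=\rho(z_{x_0}+s)$. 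Then $\rho''+\kappa\rho=0$ together with $Ric\geq(n-1)\kappa$ makes the integral nonpositive, the constraint (8.5) transforms the first term into $(n-1)f(0)f'(0)$, and the inequality collapses to
\begin{equation*}
0\leq f(l)\bigl[(n-1)f'(l)-H(y_0)f(l)\bigr].
\end{equation*}
But $(\rho'/\rho)'=-\kappa-(\rho'/\rho)^2>0$ yields $|\rho'/\rho|<\sqrt{-\kappa}$ strictly, so $(n-1)f'(l)/f(l)<(n-1)\sqrt{-\kappa}\leq H(y_0)$, a contradiction.

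Case 2 ($x_0\in\partial M$, $y_0\in M^\circ$) is symmetric: now $z_{x_0}=c$, $z_{y_0}=c+l+\varepsilon_0$, and the boundary term now sits at $s=0$. Choosing $f(s)=\rho(z_{y_0}-l+s)$ converts the interior term into $-(n-1)f(l)f'(l)$, which cancels against the boundary contribution in $(n-1)ff'|_0^l$, leaving
\begin{equation*}
0\leq -f(0)\bigl[H(x_0)f(0)+(n-1)f'(0)\bigr].
\end{equation*}
The strict bound $-\rho'/\rho<\sqrt{-\kappa}$ together with $H(x_0)\geq(n-1)\sqrt{-\kappa}$ makes the bracket strictly positive, again a contradiction.

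For Case 3 ($x_0,y_0\in M^\circ$), the preliminary step is to show that a length-minimizing path in the metric completion realizing $\bar d(x_0,y_0)$ can be taken to lie entirely in $M^\circ$. If such a path meets $\partial M$, let $x_*,y_*\in\partial M$ be the boundary points on it nearest to $x_0,y_0$ respectively; writing
\begin{equation*}
\varepsilon_0+\bar d(x_*,y_*)=\bigl[c-\psi(u(x_0))-d(x_0,x_*)\bigr]+\bigl[\psi(u(y_0))-c-d(y_*,y_0)\bigr],
\end{equation*}
at least one summand is positive, and the associated displaced two-point function (for example $c-\psi(u(x))-d(x,\partial M)\leq\varepsilon_1$ near $x_0$) attains its positive maximum in $M^\circ$, reducing to Case 1 or Case 2. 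Once $\gamma_0\subset M^\circ$, the argument of Theorem 4.1 applied with $f(s)=\rho(z_{x_0}+\varepsilon_0+s)$ goes through verbatim, and the strict monotonicity of $-(n-1)\rho'/\rho$ delivers the contradiction. The main obstacle, and the reason the hypothesis on $H_{\partial M}$ is sharp, is precisely the boundary term in Cases 1 and 2: the horo-mean convexity bound $H\geq(n-1)\sqrt{-\kappa}$ is just what is needed to dominate $(n-1)|\rho'/\rho|$, which is itself bounded by $\sqrt{-\kappa}$ via $(\rho'/\rho)'>0$.
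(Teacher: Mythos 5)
Your proposal is correct and follows essentially the same three-case scheme as the paper's proof: interior pairs handled as in Theorem \ref{thm4.1}, boundary cases closed by choosing $f$ from the warping factor $\rho$ and playing $H\geq (n-1)\sqrt{-\kappa}$ against the bound $|\rho'/\rho|<\sqrt{-\kappa}$ coming from $(\rho'/\rho)'>0$, and the reduction of a boundary-touching minimizer in Case 3 to Cases 1--2 via the displaced function. The only (harmless) difference is in Case 1, where you take $f(s)=\rho(z_{x_0}+s)$ so the strictness comes solely from $|\rho'/\rho|<\sqrt{-\kappa}$, while the paper shifts by $\varepsilon_0$ and also invokes the monotonicity of $\rho'/\rho$; in Case 2 your choice coincides with the paper's.
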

\begin{proof}[Sketch of the proof]
The proof goes through as in Theorem \ref{thm8.3}. Here we only point out some difference. We still have three cases. In Case 1, as before we will first get (compare \eqref{comparison})
\begin{align*}
0&\leq -\frac{f^2(0)}{\varphi'\beta(\varphi,\varphi')}(q(\varphi,\varphi')+\varphi''\alpha(\varphi,\varphi'))\bigg|_{z_{x_0}}\\
&-f^2(l)H+(n-1)ff'\big|_0^l-\int_0^lf((n-1)f''+Ric(\gamma') f)ds.
\end{align*}
Then let $f(s)=\rho(z_{x_0}+\varepsilon_0+s)$. By use of \eqref{cond8.3}, $(\frac{\rho'}{\rho})'>0$, $Ric(\gamma')\geq (n-1)\kappa$ and $H\geq (n-1)\sqrt{-\kappa}$, we can obtain
\begin{equation*}
0<\rho(c)\rho'(c)-\sqrt{-\kappa}\rho^2(c).
\end{equation*}
On the other hand, $\rho''+\kappa \rho=0$ and $(\frac{\rho'}{\rho})'>0$ implies
\begin{equation}\label{cond8.4}
-\sqrt{-\kappa}<\frac{\rho'}{\rho}<\sqrt{-\kappa}.
\end{equation}
So we have a contradiction.

In Case 2, we first get (compare \eqref{comparison})
\begin{align*}
0&\leq \frac{f^2(l)}{\varphi'\beta(\varphi,\varphi')}(q(\varphi,\varphi')+\varphi''\alpha(\varphi,\varphi'))\bigg|_{z_{y_0}}\\
&-f^2(0)H+(n-1)ff'\big|_0^l-\int_0^lf((n-1)f''+Ric(\gamma') f)ds.
\end{align*}
Let $f(s)=\rho(c+\varepsilon_0+s)$. Then similarly we will get
\begin{equation*}
0\leq -\rho(c+\e_0)\rho'(c+\e_0)-\sqrt{-\kappa}\rho^2(c+\e_0),
\end{equation*}
contradicting with \eqref{cond8.4}.

Lastly, Case 3 can be handled as in Theorem \ref{thm8.3}. So we complete the proof of Theorem \ref{thm8.4}.

\end{proof}

\begin{rem}
We have not treated the case of metrically complete manifolds with boundary here, but the techniques above (a combination of the methods of sections \ref{sec:noncompact} and \ref{sec:boundary}) easily extends to this case given a suitable version of the compactness theorem for Riemannian manifolds with boundary and with bounded geometry in a suitable sense.  Such a compactness result is available in \cite{Muller18}.  We discuss the Finsler case below in a somewhat more restricted setting (see Theorem \ref{thm8.6})
\end{rem}

\subsection{Finsler manifolds with boundary}

For a Finsler measure space $(M^n,F,m)$ with boundary $(\pt M, F|_{T\pt M}, m|_{\pt M})$, let $\nu_o$ and $\nu_i$ be the outer and inner unit normals of the boundary, respectively. Then the outer mean curvature $H^{out}$ and the inner mean curvature $H^{in}$ are given by
\begin{align*}
H^{out}&=-\sum_{k=1}^{n-1}g_{\nu_{o}}( D^{\nu_{o}}_{e_k}e_k,\nu_{o}),\\
H^{in}&=\sum_{k=1}^{n-1}g_{\nu_{i}}( D^{\nu_{i}}_{e_k}e_k,\nu_{i}),
\end{align*}
where $\{e_k\}_{k=1}^{n-1}$ are orthonormal basis of $T\pt M$ with respect to the corresponding metrics.

Now we may introduce the weighted outer and inner mean curvatures as follows.
\begin{defn}
The weighted outer mean curvature is defined by
\begin{equation*}
H^{out}_{\infty}=H^{out}-S(\nu_o),
\end{equation*}
where $S:TM\rightarrow \R$ is the S-curvature. Similarly, the weighted inner mean curvature is defined by
\begin{equation*}
H^{in}_{\infty}=H^{in}+S(\nu_i).
\end{equation*}

\end{defn}

Our first result in this subsection is concerned with compact Finsler measure spaces with boundary.

\begin{thm}\label{thm8.5}
Let $(M^n,F,m)$ be a compact Finsler measure space with nonnegative weighted Ricci curvature $Ric_\infty$ and with nonempty boundary. Moreover, assume both the outer and inner weighted mean curvatures of $(\pt M, F|_{T\pt M}, m|_{\pt M})$ are nonnegative. Let $u\in W^{1,p}(M)$ be a bounded solution of Equation \eqref{eq6.0} with Dirichlet boundary condition $u|_{\pt M}=u_0$ for some constant $u_0$. Suppose the barrier function $\varphi:[a,b]\rightarrow [\inf u,\sup u]$ satisfies
\begin{align*}
&\quad\qquad\varphi'>0,\\
&\Lambda((\varphi')^2)\varphi''+q(\varphi)=0.
\end{align*}
Moreover let $\psi$ be the inverse of $\varphi$, i.e. $\psi(\varphi(s))=s$. Then we have
\begin{equation*}
\psi(u(y))-\psi(u(x))-\bar{d}(x,y)\leq 0,\: \forall x,y\in M,
\end{equation*}
where $\bar{d}$ is the generalized distance function on $M\times M$.
\end{thm}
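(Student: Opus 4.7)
The plan is to proceed by contradiction, combining the Finsler interior two-point analysis of Theorem~\ref{thm6.1} with the boundary techniques of Theorem~\ref{thm8.3}. Following Remark~\ref{rem-perturbation} and equation~\eqref{eq-8.3}, I would first perturb $\varphi$ to $\varphi_\delta$ solving
\[
\Lambda((\varphi_\delta')^2)\varphi_\delta''+q(\varphi_\delta)=-\delta(z-c)\,\varphi_\delta'\,\Phi'((\varphi_\delta')^2),\qquad \varphi_\delta(c)=u_0,\ \varphi_\delta'(c)=\varphi'(c),
\]
for small $\delta>0$, where $c=\psi(u_0)$. Then
\[
\mathcal{Q}(z):=\frac{\Lambda((\varphi_\delta')^2)\varphi_\delta''+q(\varphi_\delta)}{\varphi_\delta'\Phi'((\varphi_\delta')^2)}=-\delta(z-c)
\]
is strictly positive for $z<c$ and strictly negative for $z>c$. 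Writing $\psi_\delta$ for the inverse of $\varphi_\delta$ and $Z(x,y)=\psi_\delta(u(y))-\psi_\delta(u(x))-\bar d(x,y)$, I would assume $\max Z=Z(x_0,y_0)=\varepsilon_0>0$ and split into three cases according to whether $x_0,y_0$ lie in $M^\circ$ or in $\pt M$.

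Case~(I): $x_0\in M^\circ$, $y_0\in\pt M$. Since $u|_{\pt M}=u_0$, $\psi_\delta(u(y_0))=c$ and so $z_{x_0}=c-\bar d(x_0,y_0)-\varepsilon_0<c$; moreover $\bar d(x_0,y_0)=d(x_0,\pt M)$, and for $x$ near $x_0$ we have $c-\psi_\delta(u(x))-d(x,\pt M)\le\varepsilon_0$ with equality at $x_0$. Let $\gamma_0:[0,l]\to M$ be a unit-speed length-minimising geodesic from $x_0$ to $y_0$; the first variation applied to tangential variations at $y_0$ forces $\gamma_0'(l)=\nu_o(y_0)$. I would then perform the interior variations~(1)--(3) of Section~\ref{sec6.7}, with the third one taken along a $g_{\gamma_s}$-parallel orthonormal frame $\{e_i(s)\}_{i<n}$ satisfying $e_i(l)\in T_{y_0}\pt M$, and with the endpoint at $y_0$ tracked by a curve on $\pt M$ so that $\gamma(\varepsilon,l)\in\pt M$. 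The Finsler second variation formula then produces an extra boundary contribution $g_{\gamma_s}(D^{\gamma_s}_{\gamma_\varepsilon}\gamma_\varepsilon,\gamma_s)|_{s=l}$, which summed over the frame and combined with the $S$-curvature term arising from the conversion of $\sum_{i<n}u_{ii}$ to the weighted Finsler Laplacian (cf.~\eqref{eq6.4}) collapses to $-H^{out}_\infty(y_0)$. Using $Ric_\infty\ge0$ along $\gamma_0$ and the Euler--Lagrange equation~\eqref{eq6.0} at $x_0$, the combined inequality takes the form $\mathcal{Q}(z_{x_0})\le -H^{out}_\infty(y_0)\le 0$, contradicting $\mathcal{Q}(z_{x_0})>0$. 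Case~(II), where $x_0\in\pt M$ and $y_0\in M^\circ$, is treated symmetrically: now $\gamma_0'(0)=\nu_i(x_0)$, the tangential-variation boundary term at $x_0$ produces $H^{in}_\infty(x_0)\ge0$, and $z_{y_0}>c$ gives $\mathcal{Q}(z_{y_0})<0$, yielding the analogous contradiction.

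Case~(III), $x_0,y_0\in M^\circ$, reduces to Theorem~\ref{thm6.1} once we verify that the minimising $\bar d$-path from $x_0$ to $y_0$ lies in $M^\circ$. By metric completeness and local compactness of $\overline M$ such a minimiser $\gamma_0$ exists in $\overline M$ by \cite[Thm.~2.5.23]{BBI01}; were it to touch $\pt M$, denote by $x_*$ (resp.\ $y_*$) the nearest point on $\gamma_0$ to $x_0$ (resp.\ $y_0$) lying on $\pt M$. Then
\[
(c-\psi_\delta(u(x_0))-d(x_0,x_*))+(\psi_\delta(u(y_0))-c-d(y_*,y_0))=\varepsilon_0+\bar d(x_*,y_*)>0,
\]
so one summand is strictly positive; after relocalising it becomes the excess of a new maximiser of $Z$ handled in Case~(I) or~(II) and already ruled out. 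With $\gamma_0\subset M^\circ$ the interior argument of Theorem~\ref{thm6.1} applies verbatim and contradicts strict monotonicity of $\mathcal{Q}$. The main obstacle I anticipate is the precise identification, in the Finsler setting, of the extra second-variation boundary term with the weighted mean curvatures $H^{out}_\infty$ and $H^{in}_\infty$: this requires careful bookkeeping of the Finsler second fundamental form computed with respect to the reference vector $\gamma_s=\nu_o$ or $\nu_i$, together with the $S$-curvature contribution arising when replacing $\sum_{i<n}u_{ii}$ by the weighted Finsler Laplacian. A secondary issue is ensuring sufficient regularity of $u$ on $M_{\nabla u}$ near the boundary endpoint of $\gamma_0$ so that the variational manipulations are rigorous for $W^{1,p}$ solutions.
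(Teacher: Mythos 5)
Your proposal follows essentially the same route as the paper's own (sketched) proof: the same perturbation of $\varphi$ to make $\mathcal{Q}(z)=-\delta(z-c)$, the same three-case analysis of the maximum point of $Z$, the same identification of the boundary second-variation term plus the $S$-curvature contribution from \eqref{eq6.4} with $H^{out}_\infty$ (resp.\ $H^{in}_\infty$) yielding $\mathcal{Q}(z_{x_0})\le 0$ (resp.\ $\mathcal{Q}(z_{y_0})\ge 0$) and a sign contradiction, and the same reduction of the interior--interior case to Theorem \ref{thm6.1} via the claim (proved through the metric completion and \cite[Thm.~2.5.23]{BBI01}) that the minimising path stays in the interior, with any boundary touching reduced back to Cases (I)/(II). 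The bookkeeping you flag as the main obstacle is exactly what the paper's proof of Theorem \ref{thm8.5} carries out, so the proposal is correct and matches the paper's argument.
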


\begin{proof}[Sketch of the proof]
Assume $\varphi(c)=u_0$ for some $c\in [a,b]$. By perturbation we may assume that $\varphi$ satisfies
\begin{align*}
\f{\Lambda((\varphi')^2)\varphi''+q(\varphi)}{\varphi'\Phi'((\varphi')^2)}=-\delta (z-c)
\end{align*}
for some small $\delta>0$. Then the remaining argument proceeds as in the proof of Theorem \ref{thm8.3}, with suitable adaptation to the Finsler setting. Once again there are three cases, and we only need to consider the adaptation in Case~1 and Case~2.

In Case 1, i.e. $x_0\in M$ and $y_0\in \pt M$, by checking the proof of Theorem~\ref{thm6.1}, we first get (see \eqref{eq6.2})
\begin{align*}
u_{nn}(x_0)\geq \varphi''(z_{x_0}),
\end{align*}
and (compare \eqref{comparison1})
\begin{align*}
0&\geq -\f{\sum_{i=1}^{n-1}u_{ii}(x_0)}{\varphi'(z_{x_0})}+\int_0^l Ric(\gamma')ds+H^{out},
\end{align*}
where $H^{out}=-\sum_{k=1}^{n-1}g_{\nu_{o}}( D^{\nu_{o}}_{e_k}e_k,\nu_{o})$ is the outer mean curvature of $\pt M$ and $\nu_0=\gamma'(l)$ is the outer unit normal at $y_0$.

Taking into account
\begin{align*}
\frac{1}{\varphi'}\sum_{i<n}u_{ii}&=-\f{\Lambda((\varphi')^2)u_{nn}+q(\varphi)}{\varphi'\Phi'((\varphi')^2)}+(\Psi\circ \gamma)'(0), \text{ at } x_0,\\
Ric_\infty(\gamma')&=Ric(\gamma')+(\Psi\circ \gamma)''\geq 0,\\
H^{out}_{\infty}=H^{out}&-S(\nu_o)=H^{out}-(\Psi\circ \gamma)'(l)\geq 0,
\end{align*}
we can derive
\begin{align*}
\f{\Lambda((\varphi')^2)\varphi''+q(\varphi)}{\varphi'\Phi'((\varphi')^2)}\bigg|_{z_{x_0}}\leq 0.
\end{align*}
Then we will get a contradiction as before.

In Case 2, i.e. $x_0\in \pt M$ and $y_0\in M$, similarly we first get (see \eqref{eq6.1})
\begin{align*}
u_{nn}(y_0)\leq  \varphi''(z_{y_0}),
\end{align*}
and (compare \eqref{comparison1})
\begin{align*}
0&\geq \f{\sum_{i=1}^{n-1}u_{ii}(y_0)}{\varphi'(z_{y_0})}+\int_0^l Ric(\gamma')ds+H^{in},
\end{align*}
where $H^{in}=\sum_{k=1}^{n-1}g_{\nu_{i}}( D^{\nu_{i}}_{e_k}e_k,\nu_{i})$ is the inner mean curvature of $\pt M$ and $\nu_i=\gamma'(0)$ is the inner unit normal at $x_0$.

Again taking into account
\begin{align*}
\frac{1}{\varphi'}\sum_{i<n}u_{ii}&=-\f{\Lambda((\varphi')^2)u_{nn}+q(\varphi)}{\varphi'\Phi'((\varphi')^2)}+(\Psi\circ \gamma)'(l), \text{ at } y_0,\\
Ric_\infty(\gamma')&=Ric(\gamma')+(\Psi\circ \gamma)''\geq 0,\\
H^{in}_{\infty}=H^{in}&+S(\nu_i)=H^{in}+(\Psi\circ \gamma)'(0)\geq 0,
\end{align*}
we can derive
\begin{align*}
\f{\Lambda((\varphi')^2)\varphi''+q(\varphi)}{\varphi'\Phi'((\varphi')^2)}\bigg|_{z_{y_0}}\geq 0.
\end{align*}
Then we will get a contradiction as before.

This completes the proof.

\end{proof}

Lastly, we consider the anisotropic problem on $\R^n$ or the domains in $\R^n$. In \cite{FV14} and \cite{CFV14}, pointwise gradient bounds for solutions of anisotropic PDEs on $\R^n$ were obtained. More precisely, let us consider the general equation from \cite{CFV14} which is of the form
\begin{equation}\label{eq8.5}
div(B'(H(Du))DH(Du))+q(u)=0,
\end{equation}
where $B(t)=\Phi(t^2)/2$ in the language here, and $H:\R^n\rightarrow \R$ is a positively homogeneous function of degree $1$. Assume $B$, $H$ and $Q$ ($q=Q'$ and $Q$ is the $F$ in \cite{CFV14}) satisfy all the assumptions in \cite{CFV14}. It can be checked that the standard Euclidean space $\R^n$ equipped with such $H$ is equivalent to a Finsler measure space $(\R^n, F, m)$ with $m=dx$. In fact, $F$ and $H$ are dual to each other, i.e. $F=H^*$. And it is more appropriate to view $Du$ in \eqref{eq8.5} as an element in $T^*\R^n$; while $\nabla u=H(Du)DH(Du)$ an element in $T\R^n$. By this view, Equation \eqref{eq8.5} is the same as
\begin{equation}\label{eq8.6}
div_m(\Phi'(F^2(\nabla u)) \n u)+q(u)=0.
\end{equation}
Here we consider the anisotropic version of the results in \cite{FV10,CFV12}.
\begin{thm}\label{thm8.6}
Let $\Omega$ be a proper domain in $\R^n$ with $\pt \Omega\in C^{2,\alpha}_{loc}$ for some $\alpha\in (0,1)$. Assume the outer and inner anisotropic mean curvatures of $\pt \Omega$ are nonnegative. Moreover, suppose $\Omega$ is of either of the three forms: (i) $\Omega=\Omega_0\times \R^{n-k}$ where $\Omega_0\subset \R^k$ is a bounded domain and $1\leq k\leq n$; (ii) $\Omega$ is an epigraph, i.e. there exists $\Psi:\R^{n-1}\rightarrow \R$ such that $\Psi\in C^{2,\alpha}_{loc}(\R^{n-1})$, $||D \Psi||_{C^{1,\alpha}(\R^{n-1})}<+\infty$, and
\begin{equation*}
\Omega=\{x=(x',x_n):x_n>\Psi(x')\};
\end{equation*}
(iii) The (ordinary) second fundamental form $h$ of the boundary $\pt \Omega$ has a uniform bound $||h||_{C^\alpha(\pt \Omega)}\leq C$, and there exists $r_0>0$ such that $F:\pt \Omega \times [0,r_0]\rightarrow \Omega$ given by $F(x,s)=x-sN(x)$ is a diffeomorphism. Here $N(x)$ denotes the (ordinary) outer unit normal of $\pt \Omega$ at $x\in \pt \Omega$.

Let $u\in C^{1,\alpha}(\Omega)$ be a bounded weak solution of Equation \eqref{eq8.5} with Dirichlet boundary condition $u|_{\pt \Omega}=u_0$ for some constant $u_0$. Suppose the barrier function $\varphi:[a,b]\rightarrow [\inf u,\sup u]$ satisfies
\begin{align*}
&\quad\qquad\varphi'>0,\\
&\Lambda((\varphi')^2)\varphi''+q(\varphi)=0.
\end{align*}
Moreover let $\psi$ be the inverse of $\varphi$, i.e. $\psi(\varphi(s))=s$. Then we have
\begin{equation*}
\psi(u(y))-\psi(u(x))-H^*(y-x)\leq 0,\: \forall x,y\in \Omega.
\end{equation*}
\end{thm}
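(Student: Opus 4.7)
The plan is to combine the translation-invariance/compactness argument used in the proofs of Theorems \ref{thm8.1} and \ref{thm8.2} with the boundary case-analysis used in the proofs of Theorems \ref{thm8.3} and \ref{thm8.5}, adapted to the Minkowski--Finsler setting. First, I identify $(\R^n, H^*, dx)$ as a Minkowski--Finsler measure space with $F = H^*$; the distance from $x$ to $y$ equals $H^*(y-x)$, the weighted Ricci curvature $\text{Ric}_\infty$ vanishes identically, the translation group acts by isometries preserving the measure, and Equation~\eqref{eq8.5} is the Finsler PDE \eqref{eq6.0}. The anisotropic mean curvatures of $\partial\Omega$ are precisely the outer and inner weighted mean curvatures of $\partial\Omega$ with $m=dx$, so the hypotheses match those of Theorem~\ref{thm8.5}.

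Argue by contradiction. Using the same perturbation device as in the proof of Theorem~\ref{thm8.5} (writing $\varphi(c)=u_0$ and solving
\[\tfrac{\Lambda((\varphi')^2)\varphi''+q(\varphi)}{\varphi'\Phi'((\varphi')^2)}=-\delta(z-c)\]
for small $\delta>0$), suppose
\[
\sup_{x,y\in\Omega}\,Z(x,y) = \varepsilon_0 > 0,\qquad Z(x,y):=\psi(u(y))-\psi(u(x))-H^*(y-x).
\]
Because $u$ is bounded and $\psi$ is continuous, along any maximising sequence $(x_j,y_j)$ the quantity $H^*(y_j-x_j)$ and hence $|y_j - x_j|$ must stay bounded.

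Now use translation invariance to extract a limiting configuration. In case (i), translate $(x_j,y_j)$ by a vector in $\{0\}\times\R^{n-k}$ so that the last $n-k$ coordinates of $x_j$ vanish; the translated $x_j$ then lie in the compact set $\overline{\Omega}_0\times\{0\}$, and $y_j$ remains in a bounded set. The domain is unchanged, and the $C^{1,\alpha}$ estimates from \cite{Tol84} combined with Arzel\`a--Ascoli yield a subsequential limit $u_\infty$ on $\Omega$ solving the same equation with the same boundary data, and $(x_\infty,y_\infty)$ at which $Z$ attains $\varepsilon_0$. In case (ii), translate horizontally and vertically so that $x'_j=0$ and $\Psi(x'_j)=0$; the uniform $C^{1,\alpha}$ bound on $D\Psi$ allows $\Psi(\,\cdot+x'_j)-\Psi(x'_j)\to \Psi_\infty$ in $C^{1,\alpha}_{loc}$, producing a limit epigraph $\Omega_\infty$ on which the anisotropic mean-curvature hypotheses persist (by pointwise convergence of the curvature of the graph and smoothness of $H$), and a limit solution $u_\infty$. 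In case (iii), the uniform bound on $\|h\|_{C^\alpha}$ plus the uniform tubular neighbourhood produce uniform $C^{2,\alpha}$ charts near boundary points, so a Cheeger--Gromov style compactness extracts a limiting domain $\Omega_\infty$ with the required curvature bounds and a limit solution.

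In every case the limit attains $Z_\infty(x_\infty,y_\infty)=\varepsilon_0>0$ with $x_\infty\neq y_\infty$. The possibility that both $x_\infty,y_\infty\in\partial\Omega_\infty$ is immediately excluded: the Dirichlet condition forces $\psi(u_\infty(x_\infty))=\psi(u_\infty(y_\infty))=c$, giving $Z_\infty=-H^*(y_\infty-x_\infty)\le 0$. The remaining three cases (both interior, or exactly one on the boundary) are handled verbatim by the proof of Theorem~\ref{thm8.5}: the interior--interior case reduces to Theorem~\ref{thm6.1} with $\text{Ric}_\infty=0$, while the mixed cases use the second variation of length along the straight segment joining $x_\infty$ and $y_\infty$, together with the sign of the weighted outer/inner mean curvature of $\partial\Omega_\infty$, to produce a strict inequality contradicting the perturbed ODE and the choice $z_{x_\infty}<c$ (resp.\ $z_{y_\infty}>c$).

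The main obstacle I expect is case (iii): one must verify that the Cheeger--Gromov compactness for domains with boundary of bounded geometry preserves the anisotropic mean-curvature inequality in the limit, and that the viscosity/weak solution framework is stable under this convergence (this is routine for the uniformly elliptic $C^{1,\alpha}$ theory used throughout, but should be stated explicitly). Case (ii) is morally similar but less delicate because the domain is an epigraph. Case (i) is essentially the direct analogue of the Theorem~\ref{thm8.1} argument combined with Theorem~\ref{thm8.5}.
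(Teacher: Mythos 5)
Your overall strategy is the same as the paper's: perturb the ODE so that $\frac{\Lambda((\varphi')^2)\varphi''+q(\varphi)}{\varphi'\Phi'((\varphi')^2)}=-\delta(z-c)$, exploit translation invariance of \eqref{eq8.5} to extract a limiting domain, solution and pair of points realizing the supremum $\varepsilon_0>0$, and then run the boundary/interior case analysis of Theorem \ref{thm8.5} (and Theorem \ref{thm6.1}) in the Minkowski--Finsler space $(\R^n,H^*,dx)$. The one genuine gap is in your compactness step for case (ii) (and, implicitly, case (iii)): you normalize by translating so that the boundary point below $x_j$ goes to the origin ($x'_j=0$, $\Psi(x'_j)=0$). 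With that normalization the translated point $x_j$ sits at height $x_{j,n}-\Psi(x'_j)$ above the origin, and nothing prevents this height from tending to $+\infty$ along the maximizing sequence (the maximizing pairs may drift arbitrarily far from $\pt\Omega$; only $|y_j-x_j|$ is controlled). In that subcase your translated points escape to infinity and no limiting configuration $(x_\infty,y_\infty)$ in $\overline{\Omega_\infty}$ is obtained, so the subsequent case analysis has nothing to apply to. The paper avoids this by recentering at $x_k$ itself and then splitting into a dichotomy: either the boundary recedes ($\theta_k'(0)\to-\infty$, resp.\ $d(0,\pt\Omega_k)\to\infty$ in case (iii)), in which case the limit domain is all of $\R^n$ and the contradiction comes from the interior argument of Theorem \ref{thm6.1} applied at the limiting interior maximum, or the boundary stays at bounded distance, in which case the limit is again an epigraph (resp.\ a domain in the class) and the argument of Theorem \ref{thm8.5} applies. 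You need to add this dichotomy; as written, the ``points far from the boundary'' regime is not covered.

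Two smaller points. First, $C^{1,\alpha}_{loc}$ convergence of the translated graph functions is not enough to pass the nonnegativity of the anisotropic mean curvatures to the limit, since these involve second derivatives; you should invoke the full hypothesis $\|D\Psi\|_{C^{1,\alpha}(\R^{n-1})}<\infty$ to get $C^{2}_{loc}$ (indeed $C^{2,\alpha'}_{loc}$) convergence, which is what the paper uses. Second, to get a limit solution on a varying domain you need uniform up-to-the-boundary $C^{1,\alpha}$ control and some device (the paper extends $u_k$ to all of $\R^n$ via Lemma 6.37 of \cite{GT83}) to apply Arzel\`a--Ascoli on fixed compact sets; this is routine but should be stated. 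Your observation that the case $x_\infty,y_\infty\in\pt\Omega_\infty$ is excluded outright by the Dirichlet condition is correct and is a harmless simplification relative to the paper.
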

\begin{rem}
Although Case (iii) contains Case (i) and Case (ii), we separate Case (i) and Case (ii) as examples so that the readers can refer to the corresponding argument in \cite{FV10,CFV12}. Other special cases of Case (iii) include domains in $\R^n$ bounded by periodically rotationally symmetric hypersurfaces, as observed in \cite[p.~1985]{CFV12}.
\end{rem}
\begin{rem}
As mentioned above, Theorem \ref{thm8.6} is an extension of \cite{FV10,CFV12} to the anisotropic setting. For its proof we shall make use of two-point functions method together with the translation invariance of Equation \eqref{eq8.5}. In contrast, the $P$-function method seems not easy to apply in the setting of Theorem \ref{thm8.6}. Furthermore, as in the isotropic setting, here we also do not assume the solution has a sign. Besides, from the proof below, it is not hard to see that two-point functions method can apply to recover the results in \cite{FV14} and \cite{CFV14}.
\end{rem}

\begin{rem}
It can be checked that as a Finsler measure space, $(\R^n,H^*,dx)$ has zero weighted Ricci curvature $Ric_\infty$ and vanishing S-curvature. So the weighted anisotropic mean curvatures are equal to the non-weighted ones.
\end{rem}
\begin{proof}[Sketch of the proof]
As before assume $\varphi(c)=u_0$ for some $c\in [a,b]$. By perturbation we may suppose that $\varphi$ satisfies
\begin{align*}
\f{\Lambda((\varphi')^2)\varphi''+q(\varphi)}{\varphi'\Phi'((\varphi')^2)}=-\delta (z-c)
\end{align*}
for some small $\delta>0$. Note that $B$ and $\Phi$ are determined mutually, so that we are free to use either notation.

Here we only consider in details Case (ii) where $\Omega$ is an epigraph. Case~(i) is easier by following the lines in \cite{FV10,CFV12}; while Case~(iii) guarantees the compactness (in certain sense) of any sequence of domains in the class, so that the argument for Case~(ii) also works. We shall sketch the proof for Case~(iii) in the end.

Let $\mathcal{N}$ be the set of all domains
\begin{equation*}
\Omega_\theta=\{x=(x',x_n):x_n>\theta(x')\},
\end{equation*}
with $\theta\in C^{2,\alpha}_{loc}(\R^{n-1};\R)$ satisfying $||D \theta||_{C^{1,\alpha}(\R^{n-1})}\leq ||D \Psi||_{C^{1,\alpha}(\R^{n-1})}$, and with nonnegative outer and inner anisotropic mean curvatures along $\pt \Omega_\theta$.

For any $\Omega_\theta\in \mathcal{N}$, let $S_\theta$ be the set of all $C^{1,\alpha}$ weak solutions $v$ of \eqref{eq8.5} in $\Omega_\theta$ with $\inf u\leq v\leq \sup u$, $||Dv||_{C^\alpha(\Omega_\theta)}\leq ||Du||_{C^\alpha(\Omega)}$ and $v|_{\pt \Omega_\theta}=u_0$.

Set
\begin{equation*}
Z(\Omega_\theta,v,x,y):=\psi(v(y))-\psi(v(x))-H^*(y-x), \quad \Omega_\theta\in \mathcal{N}, \: v\in S_\theta,\: x,y\in \Omega_\theta.
\end{equation*}
As before, to get a contradiction let us assume
\begin{equation*}
\sup_{\Omega_\theta\in \mathcal{N}, \: v\in S_\theta,\: x,y\in \Omega_\theta}Z(\Omega_\theta,v,x,y)=\varepsilon_0>0.
\end{equation*}
So there exist $\Omega_{\theta_k}\in \mathcal{N}$, $v_k\in S_{\theta_k}$ and $x_k,y_k\in \Omega_{\theta_k}$ such that
\begin{equation*}
\varepsilon_0-\frac{1}{k}\leq  Z(\Omega_{\theta_k},v_k,x_k,y_k)\leq \varepsilon_0.
\end{equation*}
Let
\begin{equation*}
u_k(x):=v_k(x+x_k),\text{ and } \Omega_{\theta_k'}:=\Omega_{\theta_k}-\{x_k\},
\end{equation*}
for some $\theta'_k\in C^{2,\alpha}_{loc}(\R^{n-1};\R)$. So $u_k\in S_{\theta_k'}$ and we get
\begin{equation}\label{eq8.7}
\varepsilon_0-\frac{1}{k}\leq  Z(\Omega_{\theta_k'},u_k,0,y_k-x_k)\leq \varepsilon_0.
\end{equation}
Next we intend to take the limit in the inequality above (up to subsequence). For that purpose first note that $||u_k||_{C^{1,\alpha}(\Omega_{\theta_k'})}\leq C$. Then Lemma~6.37 of \cite{GT83} allows us to extend $u_k$ to $\tilde{u}_k\in C^{1,\alpha}(\R^n)$ such that
\begin{equation}\label{eq8.8}
\tilde{u}_k=u_k, \text{ in }\Omega_{\theta_k'},\quad ||\tilde{u}_k||_{C^{1,\alpha}(\R^n)}\leq C.
\end{equation}

Now we may discuss the limit in \eqref{eq8.7} as follows. Note that $\theta_k'(0)\leq 0$. So there are two cases.

\textbf{Case a}: $\{\theta_k'(0)\}_{k\in \N}$ is unbounded and so goes to $-\infty$. Then $\Omega_{\theta_k'}\rightarrow \R^n$ (noticing $||\nabla \theta_k'||_{C^{1,\alpha}(\R^{n-1})}\leq C$) and $u_k\rightarrow u_\infty$ in $C^{1,\alpha}_{loc}$, where $u_\infty$ is a $C^{1,\alpha}$ bounded weak solution of \eqref{eq8.5} on $\R^n$. Moreover, from \eqref{eq8.7} we may derive
\begin{equation*}
\psi(u_\infty(y))-\psi(u_\infty(x))-H^*(y-x)\leq \varepsilon_0,\quad x,y\in \R^n,
\end{equation*}
with equality at $x_0=0$ and some point $y_0$. Now interpret the problem in Finsler language, i.e. consider \eqref{eq8.6} on Finsler measure space $(\R^n, F,dx)$ instead of \eqref{eq8.5} on Euclidean space $\R^n$ with an anisotropic function $H$. Then we can derive a contradiction as in Theorem \ref{thm6.1}.

\textbf{Case b}: $\{\theta_k'(0)\}_{k\in \N}$ is bounded. Noting $||\nabla \theta_k'||_{C^{1,\alpha}(\R^{n-1})}\leq C$, we know $\theta_k'\rightarrow \theta_\infty$ in $C^{2,\alpha}_{loc}$, where $\theta_\infty \in C^{2,\alpha}_{loc}(\R^{n-1};\R)$. Also note that $\pt \Omega_{\theta_\infty}$ has nonnegative outer and inner anisotropic mean curvatures. On the other hand, in view of \eqref{eq8.8}, we know $\tilde{u}_k\rightarrow u_\infty$ in $C^{1,\alpha}_{loc}(\R^n)$, and it can be checked that $u_\infty$ is a $C^{1,\alpha}$ bounded weak solution of \eqref{eq8.5} in $\Omega_{\theta_\infty}$, $\inf u\leq u_\infty|_{\Omega_{\theta_\infty}}\leq \sup u$, and $u_\infty|_{\pt \Omega_{\theta_\infty}}=u_0$. Now once again from \eqref{eq8.7} we may derive
\begin{equation*}
\psi(u_\infty(y))-\psi(u_\infty(x))-H^*(y-x)\leq \varepsilon_0,\quad x,y\in \overline{\Omega_{\theta_\infty}},
\end{equation*}
with equality at $x_0=0$ and some point $y_0\in \overline{\Omega_{\theta_\infty}}$. Then interpreting the problem in Finsler language, we can obtain a contradiction as in Theorem~\ref{thm8.5}.

So we complete the proof for Case~(ii).

At last let us sketch the proof for Case~(iii) as promised. Let $\mathcal{N}$ be the set of all domains $\Omega'$ which satisfy the same condition (with the same bounds) as $\Omega$ in Case~(iii).

For any $\Omega'\in \mathcal{N}$, let $S_{\Omega'}$ be the set of all $C^{1,\alpha}$ weak solutions $v$ of \eqref{eq8.5} in $\Omega'$ with $\inf u\leq v\leq \sup u$, $||Dv||_{C^\alpha(\Omega')}\leq ||Du||_{C^\alpha(\Omega)}$ and $v|_{\pt \Omega'}=u_0$.

Set
\begin{equation*}
Z(\Omega',v,x,y):=\psi(v(y))-\psi(v(x))-H^*(y-x), \quad \Omega'\in \mathcal{N}, \: v\in S_{\Omega'},\: x,y\in \Omega'.
\end{equation*}
Then to get a contradiction, (after the translation) we can get a sequence of $(\Omega_k,u_k,x_k,y_k)$ with $\Omega_k\in \mathcal{N}$, $u_k\in S_{\Omega_k}$, $0\in \Omega_k$, and $y_k-x_k\in \Omega_k$, such that
\begin{equation}
\varepsilon_0-\frac{1}{k}\leq  Z(\Omega_k,u_k,0,y_k-x_k)\leq \varepsilon_0.
\end{equation}
Then there are two subcases: either $\Omega_k$ converges to $\R^n$, or $\Omega_k$ converges to some $\Omega_\infty \in \mathcal{N}$, up to a subsequence. This depends on whether $d(0,\pt \Omega_k)$ approaches $+\infty$ or not. In either subcases we can get a contradiction as in Case~(ii). So we can finish the proof for Case~(iii).

\end{proof}


\bibliographystyle{Plain}

\end{document}